\numberwithin{equation}{section}
\theoremstyle{plain}
\newtheorem{Thm}{Theorem}[section]
\newtheorem{Prop}[Thm]{Proposition}
\newtheorem{Lem}[Thm]{Lemma}
\newtheorem{Que}[Thm]{Question}
\newtheorem{Cor}[Thm]{Corollary}
\newtheorem{Conj}[Thm]{Conjecture}
\theoremstyle{remark}
\newtheorem{Rem}[Thm]{Remark}
\theoremstyle{definition}
\newtheorem{Def}[Thm]{Definition}
\newtheorem{Eg}[Thm]{Example}
\newcommand{\Mod}[1]{\ (\mathrm{mod}\ #1)}
\def\A{\mathbb{A}}
\def\Z{\mathbb{Z}}
\def\Q{\mathbb{Q}}
\def\R{\mathbb{R}}
\def\C{\mathbb{C}}
\def\sM{\mathcal{M}}
\def\sO{\mathcal{O}}
\def\P{\mathbb{P}}
\def\PGL{\mathrm{PGL}}
\def\Aff{\mathrm{Aff}}
\def\Spec{\mathrm{Spec}}
\def\Per{\mathrm{Per}}
\def\PrePer{\mathrm{PrePer}}
\def\Aut{\mathrm{Aut}}
\def\Rat{\mathrm{Rat}}
\def\FL{\mathrm{FL}}
\def\Id{\mathrm{Id}}
\def\Poly{\mathrm{Poly}}
\def\MPoly{\mathrm{MPoly}}
\def\PNI{\mathrm{PNI}}
\def\Pcrit{\mathrm{Pcrit}}
\def\MPcrit{\mathrm{MPcrit}}
\def\Mon{\mathrm{Mon}}
\def\Fix{\mathrm{Fix}}
\def\ord{\mathrm{ord}}
\def\sac{\mathrm{sac}}
\def\multi{\mathrm{multi}}
\def\lcm{\mathrm{lcm}}
\def\critV{\mathrm{CritV}}
\def\Res{\mathrm{Res}}
\def\min{\mathrm{min}}
\def\la{\lambda}
\begin{document}

\title[]{On the multiplier spectrum of polynomials}

\author{Geng-Rui Zhang}

\address{School of Mathematical Sciences, Peking University, Beijing 100871, China}

\email{grzhang@stu.pku.edu.cn, chibasei@163.com}
\date{February 4, 2026}

\subjclass[2020]{Primary 37P35; Secondary 37P05, 37P45}

\keywords{Multiplier spectrum, moduli space of polynomials, intertwined polynomials, generalized Latt\`es maps, periodic points.}

\maketitle

\begin{abstract}
	We prove several results on the multiplier spectrum of polynomials. We provide a detailed proof of the theorem stating that the multiplier spectrum morphism is generically injective on the moduli space of polynomials. We obtain a description of the non-injective locus of the multiplier spectrum morphism for polynomials of degree $d\geq2$. Roughly speaking, we prove that, apart from isolated exceptions, polynomials with the same multiplier spectrum are intertwined. More precisely, we show that, up to iteration and isolated exceptions, the polynomials are either equivalent or related by Ritt moves. We also investigate the relationship between Ritt moves and multiplier spectra over arithmetic progressions.
\end{abstract}
	
\tableofcontents
	
\section{Introduction}
This paper aims to study the multipliers at periodic points of a rational map $f:\P^1(\C)\to\P^1(\C)$ of degree at least $2$, with a specific focus on the case where $f$ is a polynomial.

\subsection{The multiplier spectrum morphism on rational maps}\label{S1.1}
We recall some general definitions, constructions, and results for rational maps (not just for polynomials).

Let $f:\P^1\to\P^1$ be an endomorphism of degree $d\geq2$ over $\C$, i.e., $f\in\C(z)$ is of degree $d\geq2$. For an $f$-periodic point $z_0\in\P^1(\C)$ with exact period $n$, set $n_f(z_0):=n$. The \emph{multiplier} of $f$ at $z_0$ is defined as the differential
$$\rho_f(z_0):=df^{\circ n}(z_0)\in\C.$$
(Throughout this paper, the symbol $\circ$ is always used to denote composition of rational maps, while its absence indicates multiplication in the field $\C(z)$ of rational maps. ) Write $n(z_0)=n_f(z_0)$ and $\rho(z_0)=\rho_f(z_0)$ for simplicity, when the map $f$ is clear from context. The \emph{length} of $f$ at $z_0$ is the modulus $\left|\rho_f(z_0)\right|$. The \emph{characteristic exponent} of $f$ at $z_0$ is defined to be $\chi_f(z_0):=n^{-1}\log\left|\rho_f(z_0)\right|$, when $\rho_f(z_0)\neq0$. Note that multipliers, lengths, and characteristic exponents are invariant under conjugacy by M\"obius transformations.

Let $\Per(f)(\C)$ denote the set of $f$-periodic points in $\P^1(\C)$,
and set
$$\Per^*(f)(\C):=\{z_0\in\Per(f)(\C)\colon\rho_f(z_0)\neq0\}.$$
Write $\Per(f)$ and $\Per^*(f)$ for simplicity when the base field $\C$ is clear. Let $\PrePer(f)(\C)$ (or $\PrePer(f)$) denote the set of $f$-preperiodic points in $\P^1(\C)$.

We identify $\PGL_2(\C)$ with the group of rational maps $g\in\C(z)$ of degree $1$ (i.e., M\"obius transformations over $\C$), which forms the automorphism group of the projective line $\P^1$ over $\C$.

For endomorphisms $g:X\to X$ and $h:Y\to Y$ on algebraic varieties, we say that $h$ is \emph{semi-conjugate} to $g$ if there is a dominant morphism $\pi:X\to Y$ such that $\pi\circ g=h\circ\pi$, written $g\geq h$ (or $g\geq_\pi h$ when $\pi$ is specified).

In complex dynamics, the exceptional maps defined below are often regarded as special examples within the class of rational maps of degree $\geq2$. They are the rational maps on $\P^1(\C)$ that are related to algebraic groups and exhibit special dynamical properties.

\begin{Def}\label{excep}
	Let $f:\P^1\to\P^1$ be an endomorphism over $\C$ of degree $d\geq2$. 
\begin{itemize}
	\item The map $f$ is called a \emph{Latt\`es map} if there exists an endomorphism $\phi$ on an elliptic curve $E$ such that $\phi\geq f$. The map $f$ is called \emph{flexible Latt\`es} if there exist an elliptic curve $E$ and $n\in\Z\setminus\{0,\pm1\}$ such that $[n]\geq_\pi f$, where $[n]$ denotes the multiplication-by-$n$ map on $E$ and $\pi:E\to\P^1$ is the quotient map modulo $\{\pm1\}$. (Note that in this case we must have $d=n^2$, a perfect square in $\Z$.) A non-flexible Latt\`es map is called \emph{rigid Latt\`es}.
	\item We say that $f$ is of \emph{monomial type} if it is semi-conjugate to the power map $z\mapsto z^n$ on $\P^1$ for some $n\in\Z\setminus\{0,\pm1\}$. It is well-known that $f$ is of monomial type if and only if it is conjugate to $z^{\pm d}$ or $\pm T_d(z)$, where $T_d(z)$ is the (normalized) \emph{Chebyshev polynomial of degree $d$} (or the Dickson polynomial of degree $d$ with parameter $1$), i.e., the unique monic polynomial $T_d\in\C[z]$ of degree $d$ such that $T_d(z+z^{-1})=z^d+z^{-d}$. See \cite[Chapter~6]{Silverman2007} and \cite{milnor2006lattes}. 
	\item $f$ is called \emph{exceptional} if it is Latt\`es or of monomial type.
\end{itemize}
These definitions depend only on the conjugacy class of $f$. Moreover, it is well-known that $f$ is exceptional if and only if some iterate (or every iterate) $f^{\circ k}$ is exceptional ($k\in\Z_{>0}$).
\end{Def}

Fix an integer $d\geq2$. Let $\Rat_d$ be the space of degree $d$ endomorphisms on $\P^1$, which is a smooth affine variety of dimension $2d+1$ \cite{Silverman2012}. 
Let $\FL_d\subseteq\Rat_d$ be the locus of flexible Latt\`es maps, which is Zariski closed in $\Rat_d$. 
The group $\PGL_2=\Aut(\P^1)$ acts on $\Rat_d$ by conjugacy. The geometric quotient
$$\sM_d:=\Rat_d/\PGL_2=\Spec (\sO(\Rat_d)^{\PGL_2})$$
is the (coarse) \emph{moduli space of endomorphisms of degree $d$} \cite{Silverman2012}, which is an affine variety of dimension $2d-2$ \cite[Theorem~4.36(c)]{Silverman2007}. Let $\Psi:\Rat_d\to\sM_d$ be the quotient morphism. For $f\in\Rat_d(\C)$, we also denote its conjugacy class by $\Psi(f)=[f]\in\sM_d(\C)$. Set $[\FL_d]:=\Psi(\FL_d)$. Note that $\FL_d$ is non-empty if and only if $d$ is a perfect square in $\Z$, and in this case $[\FL_d]$ is equidimensional of dimension $1$. The above construction works over any algebraically closed field of characteristic zero and commutes with base changes.

Fix $n\in\Z_{>0}$. Let $\Per_n(f)=\Per_n(f)(\C)$ be the multiset
\footnote{A \emph{multiset} is a collection that allows multiple instances for each of its elements. The number of instances of an element is called its \emph{multiplicity}. For example, $\{a,a,b,c,c,c\}$ is a multiset of cardinality $6$, with multiplicities $2,1,3$ for $a,b,c$, respectively.}
of fixed points of $f^{\circ n}$. This multiset has cardinality $d^n+1=:N_{d,n}$. Using elementary symmetric polynomials, the multipliers at the $N_{d,n}$ fixed points of $f^{\circ n}$ define a point
$$S_n(f)=(\sigma_{1,n}(f),\dots,\sigma_{N_{d,n},n}(f))\in\C^{N_{d,n}},$$
where $\sigma_{j,n}(f)$ is the $j$-th elementary symmetric polynomial in the $N_{d,n}$-tuple $\{\rho_{f^{\circ n}}(z)\}_{z\in\Per_n(f)}$ for $1\leq j\leq N_{d,n}$. We define the \emph{multiplier spectrum} of $f$ to be the sequence $S(f)=(S_n(f))_{n=1}^\infty$. It is clear that $S_n$ takes the same value within a conjugacy class of rational maps and thus defines a morphism $S_n:\sM_d\to\A^{N_{d,n}}$ over $\C$, called the \emph{multiplier spectrum morphism of level $n$}. (In fact, the morphism $S_n$ is defined over $\Q$.) The morphism
$$\tau_{d,n}:\sM_d\to\A^{N_{d,1}}\times\cdots\times\A^{N_{d,n}}=\A^{N_{d,1}+\cdots+N_{d,n}}$$
given by $\tau_{d,n}([f])=\left(S_1([f]),\dots,S_n([f])\right)$ is called the \emph{multiplier spectrum morphism up to level $n$}. For $f\in\Rat_d(\C)$, we also write $\tau_{d,n}(f):=\tau_{d,n}([f])$.

For $n\in\Z_{>0}$, set $R_{d,n}:=\{([f],[g])\in\sM_d(\C)^2\mid\tau_{d,n}(f)=\tau_{d,n}(g)\}$. Then $(R_{d,n})_{n=1}^\infty$ forms a decreasing sequence of Zariski closed subsets of $\sM_d(\C)^2$, which stabilizes due to the Noetherian property. Let $m_d\geq1$ be minimal such that $\cap_{n=1}^\infty R_{d,n}=R_{d,m_d}$. Consequently, for all $f,g\in\Rat_d(\C)$, $f$ and $g$ have the same multiplier spectrum $S(f)=S(g)$ if and only if $\tau_{d,m_d}(f)=\tau_{d,m_d}(g)$, i.e., $S_n(f)=S_n(g)$ for $1\leq n\leq m_d$. Define $\tau_d:=\tau_{d,m_d}:\sM_d\to\A^{N_{d,1}+\cdots+N_{d,m_d}}$, called the \emph{multiplier spectrum morphism} on $\sM_d$. Abusing notation, we also use $\tau_d$ to denote $\tau_d\circ\Psi$, called the \emph{multiplier spectrum morphism} on $\Rat_d$. For two rational maps $f,g\in\Rat_d(\C)$, $f$ and $g$ are called \emph{isospectral} if they have the same multiplier spectrum $S(f)=S(g)$ (or equivalently, if $\tau_d(f)=\tau_d(g)$). It is well-known that rational maps within an irreducible component of $\FL_d(\C)$ are isospectral.

The following remarkable theorem of McMullen \cite{McMullen1987} establishes that, with the exception of flexible Latt\`es maps, the multiplier spectrum determines the conjugacy class of rational maps up to only finitely many possibilities.

\begin{Thm}[McMullen]
	For every integer $d\geq2$, the morphism
	$$\tau_d:\sM_d(\C)\setminus\Psi(\FL_d(\C))\to\A^{N_{d,1}}(\C)\times\cdots\times\A^{N_{d,m_d}}(\C)$$
	is quasi-finite.
\end{Thm}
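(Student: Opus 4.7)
The plan is to establish quasi-finiteness by excluding positive-dimensional fibers: since $\tau_d$ is a morphism of affine varieties, the upper semicontinuity of fiber dimension reduces the problem to showing that no fiber contains an irreducible analytic curve meeting $\sM_d(\C)\setminus[\FL_d]$. Assume for contradiction that some $[f_0]\in\sM_d(\C)\setminus[\FL_d]$ lies in a positive-dimensional irreducible component $Z$ of a fiber. Lifting via a local analytic section of $\Psi$, choose a non-constant holomorphic disc $t\mapsto f_t\in\Rat_d(\C)$ with image in $\Psi^{-1}(Z)$, so that $\tau_d(f_t)=\tau_d(f_0)$ for all $t$ while the conjugacy classes $[f_t]$ are not all equal.

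The central step is to upgrade constancy of the multiplier spectrum into a quasi-conformal conjugacy. Constant nonzero multipliers at repelling cycles together with the implicit function theorem show that every repelling periodic point of $f_0$ moves holomorphically with $t$; by the $\lambda$-lemma of Ma\~n\'e--Sad--Sullivan and S\l odkowski's extended $\lambda$-lemma, this motion extends to a holomorphic motion $h_t:\P^1(\C)\to\P^1(\C)$ by quasi-conformal homeomorphisms with $h_0=\mathrm{id}$. Since $h_t$ conjugates $f_0$ to $f_t$ on the dense set of repelling periodic points, it does so everywhere. The hypothesis that $[f_t]$ is non-constant in $\sM_d$ means that no choice of $\PGL_2$-normalization can make all $h_t$ conformal, so the associated Beltrami differential $\mu_t:=(\partial_{\bar z}h_t)/(\partial_z h_t)$ is a nonzero $f_0$-invariant measurable line field for some $t\neq 0$.

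Next, I would argue that constancy of the multiplier spectrum forces $\mu_t$ to be supported on the Julia set $J(f_0)$. Using Sullivan's no-wandering-domains theorem, one reduces to analyzing deformations on the finitely many grand orbits of periodic Fatou components, and case-by-case invokes the classification: attracting and superattracting basins contribute precisely one deformation parameter per cycle, which is registered in the multiplier at that cycle; parabolic basins similarly imprint on multipliers and formal invariants detectable from $\tau_d$; Siegel disks are parametrized by the rotation number, which is the multiplier of their central fixed point; and Herman rings register through the rotation numbers on their boundary periodic orbits, which are accumulated by repelling cycles whose multipliers lie in the spectrum. Thus all Fatou-supported pieces of $\mu_t$ are killed by the spectrum hypothesis, leaving $\mu_t$ supported on $J(f_0)$ and of positive area measure there. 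I would then invoke McMullen's theorem on invariant line fields---an $f$-invariant measurable line field on a positive-area subset of $J(f)$ forces $f$ to be a flexible Latt\`es map---to conclude $[f_0]\in[\FL_d]$, contradicting the choice of $[f_0]$.

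The principal obstacle is McMullen's invariant line field theorem on the Julia set itself, whose proof is a deep piece of complex analysis combining thermodynamic formalism, area estimates along repelling periodic orbits, and fine geometric analysis of blow-ups of Julia sets. A secondary technical obstacle is the Fatou component analysis: ruling out deformations that are invisible to the spectrum on Herman rings and Siegel disks requires exploiting the interaction of their boundaries with the postcritical set and with nearby repelling cycles, so that their moduli ultimately reappear in some multiplier of the sequence $(S_n(f))_{n\geq 1}$.
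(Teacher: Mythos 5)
The paper does not reprove McMullen's theorem; it cites \cite{McMullen1987}. Your proposal reconstructs the correct overall architecture of McMullen's argument---reduce to excluding positive-dimensional fibers, use constancy of multipliers to get a holomorphic motion of repelling periodic points, extend by the $\lambda$-lemma to a quasi-conformal conjugacy, pass to the $f_0$-invariant Beltrami differential, split it into Fatou and Julia pieces, and argue that a nontrivial piece on $J(f_0)$ forces Lattès. That shape is right.

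The decisive gap is the last step. You invoke ``McMullen's theorem on invariant line fields,'' stated as: an $f$-invariant measurable line field on a positive-area subset of $J(f)$ forces $f$ to be flexible Latt\`es. No such theorem exists. This is the celebrated \emph{No Invariant Line Fields} conjecture, equivalent to density of hyperbolicity in $\Rat_d$, and it remains open. McMullen's 1987 argument does not, and cannot, appeal to it. What he actually uses is that the deformation in question \emph{also} preserves the multiplier at every periodic point: the first-order variation of each multiplier is a specific pairing of the Beltrami differential $\mu_t$ against a kernel localized along the corresponding periodic orbit, and the vanishing of all of these pairings is a very strong linear constraint on $\mu_t$. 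It is this extra rigidity---not a blanket ban on invariant line fields---that forces an affine structure on the Julia set and hence the Latt\`es conclusion. In other words, you need to exclude only those line fields whose associated deformation leaves $\tau_d$ constant, a much smaller class than all invariant line fields, and that is what the constancy hypothesis buys. By discarding this input and quoting the general conjecture as a theorem, your proof would only be valid conditionally.

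A secondary weakness is the Fatou analysis. Your claim that each periodic Fatou cycle contributes ``one deformation parameter, registered in the multiplier'' is not accurate in general: an attracting basin absorbing several critical orbits contributes several Teichm\"uller parameters, only one of which is the multiplier, and the modulus of a Herman ring is a genuine Teichm\"uller parameter that is not equal to any multiplier at all. So the dichotomy ``Fatou deformations are visible in $\tau_d$; therefore the remaining part of $\mu_t$ lives on $J$'' is not a simple bookkeeping of one multiplier per parameter. The actual resolution again goes through the finer analysis of how a Beltrami differential supported in a Fatou component perturbs the multipliers of nearby repelling cycles (and, for Herman rings, through the structure of algebraic families); your sketch acknowledges the difficulty but does not resolve it.
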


Recently, Ji and Xie proved a significant generalization \cite[Theorem~1.3]{JX23} of McMullen's theorem:
\begin{Thm}[Ji and Xie]\label{ratgeninj}
	For every integer $d\geq2$, the morphism
	$$\tau_d:\sM_d(\C)\to\A^{N_{d,1}}(\C)\times\cdots\times\A^{N_{d,m_d}}(\C)$$
	is generically injective; that is, there exist a non-empty Zariski open subset $U$ of $\sM_d(\C)$ and a Zariski open subset $W$ of the Zariski closure of $\tau_d(U)$ with $\tau_d^{-1}(W)=U$ such that $\tau_d:U\to W$ is a finite morphism of degree $1$.
\end{Thm}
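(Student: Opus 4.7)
The plan is to upgrade McMullen's quasi-finiteness theorem, which already covers $\sM_d(\C)\setminus\Psi(\FL_d(\C))$, to generic injectivity by reducing the statement to a generic-degree computation and then showing the generic degree equals $1$. Since $\Rat_d$ is an irreducible Zariski open subvariety of $\P^{2d+1}$, the moduli space $\sM_d$ is irreducible of dimension $2d-2$. The flexible Latt\`es locus $\Psi(\FL_d(\C))$ is a proper Zariski closed subset (equidimensional of dimension $1$ when $d$ is a perfect square, empty otherwise), so McMullen's theorem forces $\tau_d$ to be dominant onto its scheme-theoretic image $Y\subseteq \prod_{n=1}^{m_d}\A^{N_{d,n}}$ with $\dim Y=2d-2$. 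By generic flatness in characteristic zero, there exist dense Zariski opens $U\subseteq\sM_d(\C)\setminus\Psi(\FL_d(\C))$ and $W\subseteq Y_{\mathrm{sm}}$ with $\tau_d^{-1}(W)=U$ such that $\tau_d\colon U\to W$ is finite \'etale of some constant degree $k\geq 1$. The theorem is thus equivalent to the assertion $k=1$.

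The core task is to produce a single conjugacy class $[f_0]\in U$ with $\tau_d^{-1}(\tau_d([f_0]))=\{[f_0]\}$; by constancy of the degree on $W$ this forces $k=1$. I would locate $[f_0]$ inside a well-understood subfamily of $\sM_d$---for instance, a sufficiently generic polynomial class in $\Poly_d/\Aff$---and then argue directly that any $[g]\in\sM_d(\C)$ sharing the multiplier spectrum of $f_0$ equals $[f_0]$. The argument would combine (i) generating-function identities expressing the multiplier spectrum in terms of traces of iterates, and in particular recovering the Artin--Mazur zeta function of $f_0$, with (ii) the combinatorics of periodic orbits in the Julia set, which for a generic non-exceptional map should suffice to pin down the dynamics up to M\"obius conjugacy.

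The main obstacle is precisely this implication ``all multipliers agree $\Rightarrow$ M\"obius conjugate'' at a generic point. McMullen's quasi-finiteness is a priori compatible with $\tau_d$ being a nontrivial \'etale cover onto its image, so generic injectivity demands a genuinely dynamical rigidity input going beyond dimension counts. I would attempt an analytic route via holomorphic motions: a connected positive-dimensional fibre of $\tau_d$ in $U$ should yield a nontrivial quasiconformal deformation of a non-exceptional rational map preserving all multipliers, contradicting the rigidity of structurally stable non-exceptional dynamics (in the spirit of McMullen's own techniques, pushed further). An alternative arithmetic route would specialize to a number field and exploit height- and resultant-theoretic rigidity of the periodic-point scheme under the constraint that every multiplier symmetric function agrees. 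Either route must also carefully handle the extra symmetries of the non-flexible exceptional loci---Chebyshev, power, and rigid Latt\`es maps---which lie inside $U$.
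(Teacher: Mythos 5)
Your reduction to the claim ``$\tau_d$ is finite \'etale of some constant degree $k$ on a dense open, and one singleton fiber forces $k=1$'' is sound, but the two routes you sketch to produce that singleton fiber both fail, and they fail for structural reasons. The analytic route via holomorphic motions and quasiconformal deformations only rules out \emph{positive-dimensional} fibers of $\tau_d$ (this is precisely McMullen's original proof of quasi-finiteness). A fiber with $k\geq 2$ isolated points gives two non-conjugate rational maps $f_0,g_0$ sharing a multiplier spectrum; there is no connected family joining them, hence no nontrivial quasiconformal deformation to contradict rigidity. Likewise, recovering the Artin--Mazur zeta function from the multiplier spectrum does not determine the conjugacy class---that implication \emph{is} the theorem being proved, so invoking it is circular. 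The arithmetic ``height- and resultant-theoretic rigidity'' alternative is not an argument; no such rigidity statement is available.

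The actual proof of Ji--Xie, whose polynomial analogue the paper reconstructs in \S 2.3, uses a completely different mechanism that you do not touch: one argues by contradiction, pulls back the hypothetical $k\geq 2$ \'etale cover to extract two \emph{non-isotrivial families} $h_1,h_2$ over a curve $C$ with identical multiplier spectrum and $h_1(t)\neq h_2(t)$ everywhere; arranges the PCF locus for $h_1$ to be Zariski-dense in $C$; uses the fact that the multiplier (indeed length) spectrum detects PCF-ness (\cite[Theorem 1.14]{JXZ}) to transfer density to $h_2$; applies a variant of the Dynamical Andr\'e--Oort conjecture for curves (\cite[Theorem 3.4]{JX23}) to conclude that $h_1(t)$ and $h_2(t)$ are intertwined for infinitely many $t$; and finally invokes the structure theory of intertwined simple rational maps (Pakovich \cite{Paksimple}, via \cite[Theorem 3.3]{JX23}) to force $[h_1(t)]=[h_2(t)]$ infinitely often, a contradiction. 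Without the PCF-density argument, the DAO input, and the Pakovich rigidity for simple (or, in the polynomial case, pre-simple) maps, there is no known way to convert McMullen's quasi-finiteness into generic injectivity, so the gap in your proposal is not a detail to be filled but the entire content of the theorem.
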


Similarly, by replacing multipliers with lengths and $\C$ with $\R$ in the above definitions, we obtain for each $n\in\Z_{>0}$ the \emph{length spectrum map} $L_n:\sM_d(\C)\to\R^{N_{d,n}}$ of level $n$. The \emph{length spectrum} of $f$ is defined as the sequence $L(f)=(L_n(f))_{n=1}^\infty$. We define the \emph{length spectrum map up to level $n$} as
$$\eta_{d,n}:\sM_d(\C)\to\R^{N_{d,1}}\times\cdots\times\R^{N_{d,n}},[f]\mapsto(L_1(f),\dots,L_n(f)).$$
(For the time being, we treat these maps merely as set-theoretic maps.)

\subsection{Statement of the main results}
Fix an integer $d\geq2$. By restriction, we obtain the multiplier spectrum morphism for polynomials. Let $\Poly^d\subset\Rat_d$ be the closed subvariety of polynomials of degree $d$, and let $\MPoly^d:=\Poly^d/\Aff$ be the \emph{moduli space of polynomials of degree $d$}. See \cite{FG22}. Here $\Aff$ is the group of automorphisms of $\A^1$ consisting of all degree one polynomials, acting on $\Poly^d$ by conjugacy. So $\Aff(\C)=\{az+b\in\C[z]\colon a\in\C^\times,b\in\C\}$. The following conditions are equivalent for all polynomials $f,g\in\Poly^d(\C)$:
\begin{itemize}
	\item there exists $\tau\in\PGL_2(\C)$ such that $f=\tau\circ g\circ\tau^{-1}$;
	\item there exists $\sigma\in\Aff(\C)$ such that $f=\sigma\circ g\circ\sigma^{-1}$;
\end{itemize}
This allows us to view $\MPoly^d\subset\sM_d$ as a subset of $\sM_d$. We abuse notation and let $\Psi:\Poly^d\to\MPoly^d$ still denote the quotient morphism. For every $f\in\Poly^d(\C)$, we also use $\Psi(f)=[f]\in\MPoly^d(\C)$ to denote the (affine) conjugacy class of $f$ in $\Poly^d(\C)$. The inclusions $\MPoly^d\stackrel{\iota_0}{\hookrightarrow}\sM_d$ and $\Poly^d\stackrel{\iota}{\hookrightarrow}\Rat_d$ are compatible with $\Psi$, i.e., $\Psi\circ\iota=\iota_0\circ\Psi$. For every integer $n\geq1$, denote the restriction of $\tau_{d,n}$ (resp. $\tau_d$) to the moduli space $\MPoly^d$ of polynomials by $\tilde{\tau}_{d,n}$ (resp. $\tilde{\tau}_d$). Similarly, we obtain $\tilde{\eta}_{d,n}$ for the length spectrum.

\subsubsection*{Generic injectivity}
Ji and Xie also proved the following polynomial version of Theorem~\ref{ratgeninj}, which was also proved independently by Huguin \cite{VH2412} using completely different methods:
\begin{Thm}[{\cite[Theorem~1.4]{JX23} and \cite[Theorem~C]{VH2412}}]\label{polygeninj}
	For every integer $d\geq2$, the morphism
	$$\tilde{\tau}_d:\MPoly^d(\C)\to\A^{N_{d,1}}(\C)\times\cdots\times\A^{N_{d,m_d}}(\C)$$
	is generically injective.
\end{Thm}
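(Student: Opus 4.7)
The plan is to analyze the Zariski closed incidence variety
\[
Z := \{ ([f],[g]) \in \MPoly^d(\C) \times \MPoly^d(\C) : \tilde{\tau}_d([f]) = \tilde{\tau}_d([g]) \}
\]
and reduce the theorem to showing that the diagonal $\Delta \subset Z$ is the unique irreducible component of $Z$ that dominates $\MPoly^d(\C)$ under the first projection $\pi_1$. The first step is to observe that $\Poly^d$ contains no flexible Latt\`es maps, because any semiconjugation to a multiplication-by-$n$ map on an elliptic curve is incompatible with a totally invariant fixed point at infinity. Hence McMullen's theorem applies to every $[f] \in \MPoly^d(\C)$, and $\tilde{\tau}_d$ has finite fibers throughout. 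It follows that every irreducible component of $Z$ has dimension at most $\dim \MPoly^d = d - 1$, and such a component dominates $\MPoly^d(\C)$ precisely when its dimension equals $d-1$.

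Next, I would argue by contradiction: suppose $C \neq \Delta$ is an irreducible component of $Z$ with $\pi_1(C) = \MPoly^d(\C)$. Then on a Zariski dense open subset $V \subset \MPoly^d(\C)$ one obtains an algebraic multivalued correspondence $[f] \mapsto [g_f]$ with $[g_f] \neq [f]$ and $\tilde{\tau}_d([g_f]) = \tilde{\tau}_d([f])$. The core of the proof is a structural theorem asserting that such a family-wise coincidence of multiplier spectra for non-conjugate polynomials forces $f$ and $g_f$ to be intertwined: up to passing to a common iterate and affine conjugation, they arise from each other by a Ritt move applied to a shared polynomial decomposition (with the exceptional monomial- and Chebyshev-type polynomials treated as separate strata that are themselves low-dimensional in $\MPoly^d$). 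Once this structural step is established, one observes that admitting a nontrivial Ritt intertwining imposes a proper algebraic condition on the normal-form coefficients of $f$, essentially the decomposability of some iterate in a prescribed way. Thus $\pi_1(C)$ is contained in a proper Zariski closed subset of $\MPoly^d(\C)$, contradicting dominance. No such $C$ exists, so $\tilde{\tau}_d$ restricts to an injection on the complement of $\pi_1(Z \setminus \Delta)$, which is open dense in $\MPoly^d(\C)$.

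The main obstacle is proving the structural claim in the middle paragraph: that every algebraically varying coincidence of multiplier spectra among non-conjugate polynomials must be realized by a Ritt-type intertwining. In the rational-map setting of Ji--Xie, the positive-dimensional fibers of $\tau_d$ are traced to flexible Latt\`es maps; since these cannot appear among polynomials, the only remaining source of ambiguity should be polynomial decomposition. Making this rigorous requires a careful study of how the elementary symmetric functions $\sigma_{j,n}$ of the $n$-periodic multipliers behave under polynomial composition, an exhaustion of possible semiconjugacies compatible with matching multipliers across all levels $n \leq m_d$, and a specialization argument showing that a generic member of a Ritt orbit cannot satisfy any further unexpected multiplier identities. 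This is the heart of the contribution and where the announced analysis of intertwined polynomials and Ritt moves will be deployed.
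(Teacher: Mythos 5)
Your high-level plan---argue by contradiction via the incidence variety, conclude that the offending pairs must be intertwined, and then show that intertwining cuts out a proper closed condition---matches the paper's overall shape. But the proposal leaves the two load-bearing steps unproven, and your suggested route for the first one does not match what the argument actually requires.

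The decisive gap is the ``structural theorem'' in your middle paragraph: that a family-wise coincidence of multiplier spectra forces intertwining. You acknowledge this as the main obstacle and propose to attack it by studying how the $\sigma_{j,n}$ behave under polynomial composition, exhausting semiconjugacies, and a specialization argument. This is not how the bridge is built. The paper's argument first restricts to a one-parameter family: it produces an irreducible affine curve $C$ carrying two non-isotrivial families $h_1,h_2$ of degree-$d$ polynomials with $\tilde\tau_d\circ h_1=\tilde\tau_d\circ h_2$, pointwise distinct in $\MPoly^d$, and with the PCF locus for $h_1$ Zariski dense in $C$. It then invokes two deep external inputs: (i) the fact that the length (hence multiplier) spectrum of a rational map determines whether it is PCF (\cite[Theorem~1.14]{JXZ}), so $h_2$ has the same dense PCF locus; and (ii) the variant of the Dynamical Andr\'e--Oort conjecture for curves proved in \cite[Theorem~3.4]{JX23}, which turns the shared dense PCF locus into the conclusion that $h_1(t)$ and $h_2(t)$ are intertwined for infinitely many $t$. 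Neither of these ingredients appears in your sketch, and it is hard to see how ``exhausting semiconjugacies compatible with matching multipliers'' could substitute for the DAO input; there is no elementary reason why equality of the symmetric functions $\sigma_{j,n}$ up to level $m_d$ should force any semiconjugacy relation. So your correspondence on a dominating component $C\subset Z$ does not, by itself, produce intertwined pairs, and the reduction to a curve (which the DAO statement requires) is also absent.

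The second gap is more minor but still real: you assert that a ``nontrivial Ritt intertwining imposes a proper algebraic condition,'' which is morally correct, but proving it is precisely the content of the pre-simple polynomial theory in \S\ref{secpresimple} (that a generic polynomial has uniquely decomposable iterates, is not a generalized Latt\`es map, and has trivial symmetry groups, so that intertwining with it forces conjugacy -- Theorem~\ref{poly33} for $d\geq 4$ and Proposition~\ref{poly33deg23} for $d\in\{2,3\}$). The paper packages this as: there is a nonempty Zariski open $U\subseteq\MPoly^d$ on which intertwining implies equality of conjugacy classes, and shrinks $U$ so that $\tilde\tau_d$ restricted to $U$ is finite \'etale of degree $\geq 2$ (under the contradiction hypothesis) with $U$ saturated. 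Your sketch does not establish or cite this, and without it the contradiction does not close. In short: the skeleton is right, the McMullen/flexible-Latt\`es observation is correct, but the proof as written is missing both essential inputs -- the DAO-for-curves step and the intertwining-implies-conjugacy step for general polynomials -- and your hinted approach to the first would not succeed.
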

The proof of Theorem~\ref{polygeninj} in \cite{JX23} parallels that of Theorem~\ref{ratgeninj} in the same paper. Specifically, the proof of Theorem~\ref{ratgeninj} relies on two key ingredients:
\begin{itemize}
	\item a DAO-type theorem \cite[Theorem~3.4]{JX23} for curves (here ``DAO'' means ``Dynamical Andr\'e-Oort'');
	\item a result \cite[Theorem~3.3]{JX23} about ``general'' rational maps, which is based on Pakovich's work \cite{Paksimple}.
\end{itemize}
In contrast, the proof of Theorem~\ref{polygeninj} in \cite{JX23} just replaces the second ingredient \cite[Theorem~3.3]{JX23} with \cite[Theorems~3.51~and~3.52]{FG22}. Huguin's proof \cite{VH2412} of Theorem~\ref{polygeninj} actually shows that $\tilde{\tau}_{d,2}$ is generically injective for every $d\geq2$, and it is based on intricate local computations.

The class of simple rational maps is involved in the second ingredient \cite[Theorem~3.3]{JX23}. A rational map $G\in\C(z)$ of degree $d\geq2$ is called \emph{simple} if $G$ has exactly $2d-2$ distinct critical values in $\P^1(\C)$. However, a polynomial of degree $d\geq3$ is never simple, due to the totally invariant fixed point $\infty$. To overcome this problem, for a polynomial $F\in\C[z]$ of degree $d\geq2$, we call $F$ \emph{pre-simple} if it has exactly $d-1$ distinct critical values in $\C$. This class of pre-simple polynomials serves as an analogue of simple rational maps for the polynomial case. We observe that Pakovich's methods \cite{Paksimple} are applicable to pre-simple polynomials. In \S~\ref{secpresimple}, we present the following polynomial version of \cite[Theorem~3.3]{JX23} (for the case $d\geq4$), which is essentially due to Pakovich \cite{Paksimple}:
\begin{Thm}\label{poly33S1}
	For every integer $d\geq4$, there exists a non-empty Zariski open subset $U$ of $\Poly^d(\C)$ such that, for every $f\in U$, the following hold:
	\begin{enumerate}[(1)]
		\item $f$ is pre-simple;
		\item for every pre-simple $g\in\Poly^d(\C)$, $f$ and $g$ are intertwined if and only if $[f]=[g]$ in $\MPoly^d(\C)$.
	\end{enumerate}
	Here, two polynomials $F,G\in\C[z]$ of degree $\geq2$ are called \emph{intertwined} if there exists a (possibly reducible) algebraic curve $Z\subset\A^2$ whose projections to both axes are onto such that $Z$ is invariant under the endomorphism $$(F,G):\A^2\to\A^2,(x,y)\mapsto(F(x),G(y)).$$
\end{Thm}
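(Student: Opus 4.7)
The plan is to adapt Pakovich's analysis of invariant curves under pairs of rational maps \cite{Paksimple}, replacing his simple rational functions by pre-simple polynomials. Part (1) is a routine genericity check: pre-simpleness requires that $f'\in\C[z]$ has $d-1$ distinct roots $c_1,\ldots,c_{d-1}$ (equivalent to $\Res(f',f'')\neq 0$) and that $f(c_1),\ldots,f(c_{d-1})$ are pairwise distinct, which can be written as the non-vanishing of a polynomial in the coefficients of $f$ via elementary symmetric functions of the $c_i$. Since $f(z)=z^d+\epsilon z$ is pre-simple for generic $\epsilon$, these conditions cut out a non-empty Zariski open subset $U_0\subseteq\Poly^d(\C)$.

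For Part (2), fix $f\in U_0$ and a pre-simple $g\in\Poly^d(\C)$ intertwined with $f$ via a curve $Z\subset(\A^1)^2$; passing to a suitable $(f,g)$-invariant irreducible component, I may assume $Z$ is irreducible with both projections dominant. Let $\widetilde{Z}$ be the normalization of the closure of $Z$ in $(\P^1)^2$, with finite projections $p,q:\widetilde{Z}\to\P^1$ and an endomorphism $H:\widetilde{Z}\to\widetilde{Z}$ lifting $(f,g)|_Z$, so that $f\circ p=p\circ H$ and $g\circ q=q\circ H$. Since $\infty$ is a totally ramified fixed point of both $f$ and $g$ with local degree $d$, matching of ramification indices along the boundary forces a unique point $\infty_{\widetilde{Z}}\in\widetilde{Z}$ above $(\infty,\infty)$ at which both $p$ and $q$ are totally ramified; identifying $\widetilde{Z}\setminus\{\infty_{\widetilde{Z}}\}$ with an affine curve, the morphisms $p,q$ become polynomial morphisms. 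Now Pakovich's decomposition theorems from \cite{Paksimple} apply to the pair $(p,q)$: the ramification of $p$ over $\C$ is supported over the $d-1$ distinct critical values of $f$, so Riemann--Hurwitz combined with the commuting diagrams forces $\widetilde{Z}$ to have genus zero and $(p,q)$ to come from an affine equivalence of $f$ and $g$. Shrinking $U_0$ to the non-empty open subset over which this classification runs uniformly across all possible intertwining partners yields the required $U$ with $[f]=[g]$ in $\MPoly^d(\C)$.

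The main obstacle is verifying that Pakovich's arguments in \cite{Paksimple}, which exploit the maximal $2m-2$ distinct critical values of a simple rational map of degree $m$, continue to work when only $d-1$ distinct finite critical values are available, the remaining critical value being the totally ramified $\infty$. The restriction $d\geq 4$ reflects the threshold at which the ramification budget is still large enough to rule out exotic intertwinings: a polynomial of degree $2$ has only one finite critical value and a polynomial of degree $3$ has only two, so the analogue of simpleness is too weak to drive the decomposition argument. Careful bookkeeping of the critical values of $f,g,p,q$ and of the compositions $f\circ p,g\circ q$ will be needed both to run the Pakovich analysis in the polynomial regime and to ensure that the generic statement about $f$ survives the removal of the constructible locus of partners for which the argument fails.
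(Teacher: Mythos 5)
Your outline tracks the paper's strategy at a high level (pass to a periodic irreducible component of the invariant curve, study the two projections to $\P^1$, then run a Pakovich-style decomposition argument), but there is a genuine gap at the crucial step. You assert that the decomposition analysis shows $\widetilde Z$ has genus zero and that ``$(p,q)$ comes from an affine equivalence of $f$ and $g$.'' That last clause is stronger than what the Pakovich machinery yields. What the paper's Theorem~\ref{2presimper} actually gives (via Pakovich's Theorem 4.2 in \cite{Paksimple}, Lemma~\ref{presimnongenla} on generalized Latt\`es maps, Theorem~\ref{Thm22}, and the semigroup identities of Theorem~\ref{presimrel}) is only a conjugacy of \emph{iterates}: there exist $k\geq 1$ and $\alpha\in\PGL_2(\C)$ with $f^{\circ k}=(\alpha\circ g\circ\alpha^{-1})^{\circ k}$. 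Passing from this iterate-level conjugacy to $f=\alpha\circ g\circ\alpha^{-1}$ is a separate nontrivial step, and you never address it.

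To close that gap, the paper does \emph{not} merely shrink $U_0$ to the pre-simple locus; it additionally imposes $G(f)=1$ (Lemma~\ref{genG1}), which together with the pre-simple semigroup identities (Theorem~\ref{presimrel}) forces $\{h\in\Rat_d(\C):\mu_h=\mu_f\}=\{f\}$. Then from $f^{\circ k}=(\alpha g\alpha^{-1})^{\circ k}$ one gets $\mu_f=\mu_{\alpha g\alpha^{-1}}$, hence $f=\alpha g\alpha^{-1}$. Your $U_0$ only encodes pre-simpleness and does not suffice: without the extra constraint one cannot rule out $f\neq\alpha g\alpha^{-1}$ with equal $k$-th iterates. (Note too that $\alpha$ lies a priori in $\PGL_2(\C)$; it becomes a linear polynomial only after observing both sides are polynomials fixing $\infty$.) In short, the skeleton is right, but you are missing (i) the precise Pakovich inputs that establish iterate-conjugacy rather than bare ``affine equivalence,'' and (ii) the measure-of-maximal-entropy argument and the accompanying strengthening of the genericity condition needed to descend from iterates to the maps themselves.
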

\begin{Rem}\label{interwdef}
	Let $F,G\in\C[z]$ be polynomials of degree $\geq2$. It is known that $F$ and $G$ are intertwined if and only if there exist non-constant polynomials $R,h_1,h_2\in\C[z]\setminus\C$ and $n\in\Z_{>0}$ such that
	$$F^{\circ n}\circ h_1=h_1\circ R\quad\text{and}\quad G^{\circ n}\circ h_2=h_2\circ R.$$
	In particular, $\deg(F)=\deg(G)$. See \cite[Theorem~3.39]{FG22}.
\end{Rem}
Recently, Pakovich \cite{Pakdeg23} studied general rational maps of degree $2\leq d\leq3$ in detail. Consequently, one could get a proof of Theorem~\ref{ratgeninj} for this case, using the results in \cite{Pakdeg23} as part of the argument. The philosophy is that a general rational map of degree $d$ has ``good'' properties regarding the decompositions of its iterates. However, the simplicity condition (used in \cite{Paksimple} where $d\geq4$) is not sufficient for the cases $2\leq d\leq3$.

In \S~\ref{secdeg23}, following Pakovich \cite{Pakdeg23}, we study general polynomial maps of degree $2\leq d\leq3$ and obtain the following polynomial version of \cite[Theorem~3.3]{JX23} for these lower degree cases:
\begin{Prop}\label{poly33deg23intro}
	Let $d\in\{2,3\}$. There exists a non-empty Zariski open subset $U$ of $\Poly^d(\C)$ such that for all $(f,g)\in U\times U$, $f$ and $g$ are intertwined if and only if $[f]=[g]$ in $\MPoly^d(\C)$.
\end{Prop}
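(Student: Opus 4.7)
The proof plan closely follows the strategy of Theorem \ref{poly33S1} while incorporating the refined analysis of Pakovich \cite{Pakdeg23} for low-degree maps. The starting observation is that if $f,g\in\Poly^d(\C)$ are intertwined via an invariant curve $Z\subset(\A^1)^2$, then, after selecting an irreducible component of $Z$ that still dominates both factors and passing to its normalization (compactified to $\P^1$-level), we obtain a smooth projective curve $\tilde{Z}$ with dominant morphisms $\pi_1,\pi_2:\tilde{Z}\to\P^1$ and a lift $H:\tilde{Z}\to\tilde{Z}$ of the restricted $(f,g)$-action, satisfying $\pi_1\circ H=f\circ\pi_1$ and $\pi_2\circ H=g\circ\pi_2$. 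Thus $H$ is simultaneously semi-conjugate to both $f$ and $g$. Because $f$ and $g$ are polynomials, the totally invariant point $\infty$ of each imposes strong ramification constraints on the $\pi_i$.

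Next I would define $U\subset\Poly^d(\C)$ as the Zariski open subset obtained by removing several proper closed subvarieties: the exceptional polynomials $\pm T_d$ and $z^d$ together with their affine conjugates; polynomials $f$ possessing a nontrivial affine automorphism of the dynamical system, i.e., $\sigma\in\Aff\setminus\{\Id\}$ with $\sigma\circ f=f\circ\sigma$; and polynomials whose iterates admit Ritt-type decompositions $f^{\circ n}=A\circ B$ that are not explained by iteration. For $d\in\{2,3\}$, one can enumerate the possible decomposition patterns using Ritt's theorems, and the resulting locus is cut out by proper Zariski closed conditions on $\Poly^d(\C)$.

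For $(f,g)\in U\times U$ intertwined, the polynomial version of Pakovich's analysis in \cite{Pakdeg23} of the semiconjugacy data $(H,\tilde{Z},\pi_1,\pi_2)$ implies that, outside these excluded loci, one must have $\tilde{Z}\cong\P^1$ and the $\pi_i$ realizing an affine equivalence between $f$ and $g$. More precisely, the ramification constraint at $\infty$ forces $\pi_1$ and $\pi_2$ to restrict to polynomial maps $\A^1\to\A^1$, and the genericity conditions collapse each $\pi_i$ to a degree-one map. Composing produces $\sigma\in\Aff(\C)$ with $\sigma\circ f=g\circ\sigma$, whence $[f]=[g]$ in $\MPoly^d(\C)$.

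The main obstacle, compared with the $d\geq4$ case handled by Theorem \ref{poly33S1}, is that pre-simplicity alone is insufficient to rule out Ritt-type exceptional decompositions in small degree. I expect the heart of the proof to consist in verifying explicitly that, in degrees $d\in\{2,3\}$, the polynomials giving rise to such non-trivial decomposition patterns (and to non-trivial common semiconjugacy targets) form a proper Zariski closed subset, so that the union of exclusions in the definition of $U$ remains proper closed. This mirrors and adapts the delicate case analysis of \cite{Pakdeg23} to the polynomial setting, where the totally invariant fixed point at infinity simultaneously simplifies some steps and creates the need for the additional genericity conditions beyond pre-simplicity.
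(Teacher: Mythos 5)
Your high-level strategy (normalize the invariant curve, extract semi-conjugacy data, exclude bad loci, conclude affine conjugacy) is the same as the paper's, which routes the argument through Pakovich's invariant-curve theory \cite{Pakinvvar} and \cite{Pakdeg23} via Theorems \ref{gendeg2}, \ref{gendeg3}, \ref{deg23semiconj}, and \ref{deg23percurve}. However, there are concrete gaps in the middle of your argument.

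First, the claim that ``the genericity conditions collapse each $\pi_i$ to a degree-one map'' is false: even for a single generic $f=g$ of degree $d$, the graph $y=f^{\circ s}(x)$ for any $s\geq1$ is an $(f,f)$-invariant curve, and in the uniformization $t\mapsto(\pi_1(t),\pi_2(t))$ one of the $\pi_i$ has degree $d^s$. The actual conclusion from Pakovich's analysis (Theorem \ref{deg23percurve} here) is that the periodic curve is one of the graphs $y=(\alpha\circ F_1^{\circ s})(x)$ or $x=(F_1^{\circ s}\circ\alpha^{-1})(y)$, not the graph of a M\"obius transformation.

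Second, and more seriously, from the semi-conjugacy data one first obtains only that $g^{\circ k}=\alpha\circ f^{\circ k}\circ\alpha^{-1}$ for some $k\geq1$. Passing from conjugacy of iterates to $[f]=[g]$ requires a genuine rigidity statement — namely that a general polynomial $F$ of degree $d\in\{2,3\}$ satisfies: if $G$ has degree $d$ and $G^{\circ k}=F^{\circ k}$, then $G=F$. This is condition (2) of Theorems \ref{gendeg2} and \ref{gendeg3}, and in degree 2 the paper's proof of it is a non-trivial coefficient computation using the normal forms $\phi_c=z^2+c$ and Pakovich's \cite[Lemmas 3.4, 3.5]{Pakdeg23}. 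Your proposal jumps over this step.

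Third, your exclusion list is incomplete in a way that would prevent the application of \cite[Theorem 1.1]{Pakinvvar} and \cite[Proposition 3.3]{Pakinvvar}: those results require that the relevant iterates are not \emph{generalized} Latt\`es maps, a strictly larger class than exceptional maps when $d=3$ (degree-3 polynomials conjugate to $zR(z)^2$ with $R$ linear are non-exceptional generalized Latt\`es maps). Removing only $z^d$ and $\pm T_d$ and commuting-automorphism loci does not exclude these; you need the explicit dimension count in Lemma \ref{gennotgenla} (the family $zR(z)^2$ is $3$-parameter inside the $4$-dimensional $\Poly^3$). Relatedly, in degree 2 you cannot simply remove maps with ``nontrivial affine automorphisms'': the relevant subgroup is $\Sigma(F)=\{\sigma:F\circ\sigma=F\}$, which always contains $z\mapsto-z$ after normalization and therefore cannot be made trivial by a Zariski-open exclusion; the paper handles this by showing $\Sigma(\phi_c)=\Sigma_\infty(\phi_c)$ is exactly cyclic of order $2$ and tracking it carefully through the argument.
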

Proposition~\ref{poly33deg23intro} is also a direct corollary of \cite[Theorems~3.51~and~3.52]{FG22} (see also \cite[Theorem~1.4]{GNY19}). We include this proposition to present different approaches.

In \S~\ref{S2.3}, we provide a detailed proof of Theorem~\ref{polygeninj} using Theorem~\ref{poly33S1} and Proposition~\ref{poly33deg23intro}, which is essentially due to Pakovich \cite{Paksimple,Pakdeg23} and Ji-Xie \cite{JX23}.

The intermediate results presented in \S~\ref{S2} may be useful for studying the dynamics of one-variable polynomials and their products. For instance, we present a proof of the Zariski-dense orbit conjecture for a general split polynomial endomorphism on $\A^2$ with all factors of degree $d\geq2$ (see Theorem~\ref{presimapp}).

\subsubsection*{The non-injective locus}
After establishing the generic injectivity of the multiplier spectrum morphism, a natural further direction is to study the non-injective locus of $\tau_d$ (or $\tilde{\tau}_d$), i.e., the locus
$$\{([f],[g])\in\sM_d(\C)^2\mid\tau_{d,m_d}(f)=\tau_{d,m_d}(g)\}\setminus\Delta,$$
or its intersection with $\MPoly^d(\C)^2$, where $\Delta$ is the diagonal in $\sM_d(\C)^2$.

Outside the flexible Latt\`es locus, there are two known cases of non-conjugate isospectral rational maps. For a non-constant rational map $f\in\C(z)\setminus\C$, if $f=h_1\circ h_2$ for some $h_1,h_2\in\C(z)$, then $h:=h_2\circ h_1$ is called an \emph{elementary transformation} of $f$. Two non-constant rational maps $f$ and $g$ are called \emph{equivalent}, written $f\sim g$, if there exists a finite chain of elementary transformations between $f$ and $g$. Clearly, equivalence defines an equivalence relation on $\C(z)\setminus\C$, which is coarser than conjugacy (i.e., conjugacy implies equivalence). So we can talk about the equivalence of two conjugacy classes of rational maps. An easy lemma of Pakovich \cite[Lemma~2.1]{Pak19b} shows that if two rational maps $f$ and $g$ are equivalent (of degree $\geq2$), then they are isospectral. The other known case of non-injectivity comes from exceptional maps. Silverman \cite[Theorem~6.62]{Silverman2007} showed that $\tau_d$ is not injective at $[f]\in\sM_d(\C)$ for some rigid Latt\`es $f$. Pakovich asked \cite[Problem~3.1]{Pak19a} whether these are the only obstructions to the injectivity of $\tau_d$. Ji and Xie \cite[Conjecture~1.5]{JX23} conjectured that Pakovich's question has a positive answer:
\begin{Conj}\label{ratnoninj}
Let $f$ and $g$ be non-conjugate isospectral rational maps of degree $d\geq2$. Then one of the following holds:
\begin{enumerate}[(i)]
	\item \label{ratnoninj1} both $f$ and $g$ are Latt\`es maps;
	\item \label{ratnoninj2} $f\sim g$.
\end{enumerate}
\end{Conj}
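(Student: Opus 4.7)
The plan is to combine the two known obstructions to injectivity—Ritt equivalence and Latt\`es maps—into a single framework centered on intertwining correspondences. Suppose $f,g\in\Rat_d(\C)$ are non-conjugate with the same multiplier spectrum. The first objective is to show that $f$ and $g$ must be intertwined in the sense of Theorem \ref{poly33S1}: there is a (possibly reducible) algebraic curve $Z\subset(\P^1)^2$, surjecting onto both factors, that is invariant under $(f,g):(\P^1)^2\to(\P^1)^2$.

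To produce such a $Z$, I would mimic the scheme Ji and Xie use for Theorem \ref{ratgeninj}. Take an irreducible component $V\subseteq\sM_d(\C)^2$ of the non-injective locus through $([f],[g])$. The equality of multipliers at every period supplies, for each $n\geq1$, a large family of dynamical constraints across $V$; applying the dynamical Andr\'e-Oort-type theorem for curves \cite[Theorem 3.4]{JX23} to a suitable curve inside $V$, together with a relative periodic-point construction on the universal family over $V\times(\P^1)^2$, should yield an algebraic $(f,g)$-invariant correspondence, hence the desired intertwining.

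With intertwining in hand, I would invoke a Pakovich-style classification of intertwined non-exceptional pairs to deduce $f\sim g$. For simple rational maps this is precisely Pakovich's theorem \cite{Paksimple}; the present paper extends it to pre-simple polynomials via Theorem \ref{poly33S1}, and to degrees $2,3$ via Proposition \ref{poly33deg23intro}. The hope is that after passing to a sufficiently high iterate $f^{\circ k}$, $g^{\circ k}$ the intertwined pair becomes amenable to one of these classifications, and then a uniqueness-of-decomposition argument (again going back to Ritt and Pakovich) descends equivalence from iterates down to $f$ and $g$ themselves. The exceptional cases of monomial type and rigid Latt\`es need to be treated by hand; for monomial-type maps the multiplier spectrum determines the conjugacy class up to equivalence, and the rigid Latt\`es case is precisely allowed by alternative \eqref{ratnoninj1}.

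The main obstacle will be this last reduction in full generality. The available intertwining-implies-equivalence theorems require either simplicity (which fails for every polynomial of degree $\geq3$ because of the totally invariant fixed point $\infty$) or small degree, and there is no known reason why iterates of an arbitrary non-exceptional rational map should become simple. Moreover, nontrivial invariant correspondences can arise from critical-orbit coincidences in ways that do not visibly come from elementary transformations, and separating these from the genuine Latt\`es source of non-equivalence is delicate. A full proof of Conjecture \ref{ratnoninj} therefore seems to require a new correspondence-decomposition theorem, valid for all non-exceptional rational maps, showing that any invariant correspondence between such maps is forced to come from a chain of elementary transformations; producing such a theorem is, in my view, the genuinely hard kernel of the problem.
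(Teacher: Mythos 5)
This statement is a \emph{conjecture}, not a theorem of the paper; the paper attributes it to Ji and Xie and to Pakovich, and it remains open. There is therefore no proof in the paper against which to compare your argument, and your submission correctly recognizes this: what you have written is a strategy sketch, and you honestly flag the missing kernel at the end. That self-assessment is accurate. Let me point out where your outline does and does not line up with the partial progress the paper actually makes.

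Your first step---extract an invariant correspondence from equality of multiplier spectra---works only along one-parameter families, not at an isolated pair. The DAO-type input \cite[Theorem 3.4]{JX23} produces infinitely many intertwined specializations along a curve of PCF parameters; the paper's Theorem \ref{polynoninj} is accordingly a statement about families $\phi_1,\phi_2:C\to\MPoly^d$ whose image is almost entirely in $\PNI_d$, with a finite exceptional set $S$. Passing from this family statement to a statement about an arbitrary single pair $([f],[g])$ with equal multiplier spectra is not automatic; the non-injective locus $\PNI_d$ could have zero-dimensional components, and there is no known way to place a given bad pair on a curve of bad pairs. Your proposal glosses over this.

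More importantly, your hope that ``intertwined $\Rightarrow$ equivalent'' after iteration is already contradicted, in spirit, by the paper itself. Case (\ref{polynoninj2}) of Theorem \ref{polynoninj} produces intertwined pairs of the form $(z^l V(z^{k_1})^{k_1'},\,z^l V(z^{k_2})^{k_2'})$, i.e.\ Ritt moves tied to generalized Latt\`es polynomials in the sense of Pakovich, which are intertwined but have no reason to be equivalent; and Example \ref{23move} shows that such a Ritt-move pair can have \emph{different} multiplier spectra. So the pre-simple/simple classifications you invoke are sharp in exactly the sense that they fail on generalized Latt\`es maps, and the content of Conjecture \ref{ratnoninj} for polynomials is precisely that the Ritt-move families, while intertwined, never give rise to equal multiplier spectra without being equivalent. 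Your outline identifies this as ``nontrivial invariant correspondences from critical-orbit coincidences,'' but the paper has already isolated it concretely as the generalized Latt\`es obstruction; closing that gap would require a genuinely new result about multiplier spectra of Ritt-move pairs (compare Theorem \ref{Rittmove} and Proposition \ref{multidep}, which only control the spectra along an arithmetic progression and show multiplicative dependence), which is indeed the hard part and is not supplied here.
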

Since polynomials are not Latt\`es, we only need to consider the case \eqref{ratnoninj2} in Conjecture~\ref{ratnoninj}. Note that if the degree $d$ is a prime number, then two rational maps $f$ and $g$ of degree $d$ are equivalent if and only if they are conjugate. Consequently, Conjecture~\ref{ratnoninj} predicts that for every prime $p$, the multiplier spectrum morphism $\tilde{\tau}_p$ is injective on $\MPoly^p(\C)$. On the other hand, an observation of Huguin \cite[Corollary~85]{VH2412} shows that $\tilde{\tau}_d$ is not injective on $\MPoly^d(\C)$ for every composite $d\geq2$.

If the degree $d$ is small, then it is possible to analyze the non-injective locus of $\tilde{\tau}_d$ (indeed, $\tilde{\tau}_{d,m}$ with small $m$) using explicit computations. For prime degree $2\leq d\leq3$, Huguin \cite[Theorem~C]{VH2412} showed that $\tilde{\tau}_{d,1}$ is injective on the moduli space $\MPoly^d(\C)$. Recall that Milnor \cite{Milnor1993} showed that $\tau_{2,1}$ is injective on $\Rat_2(\C)$. For the composite degree $d=4$, a result \cite[Proposition~86]{VH2412} of Huguin (whose proof relies on computer computations) shows that the following are equivalent for all $(f,g)\in\Poly^4(\C)^2$:
\begin{itemize}
	\item $\tilde{\tau}_4(f)=\tilde{\tau}_4(g)$;
	\item $\tilde{\tau}_{4,2}(f)=\tilde{\tau}_{4,2}(g)$;
	\item $[f]=[g]$ in $\MPoly^4(\C)$ or $(f,g)=(h_1\circ h_2,h_2\circ h_1)$ for some $(h_1,h_2)\in\Poly^2(\C)^2$ (in particular, $f\sim g$).
\end{itemize}

We investigate the non-injective locus of the multiplier spectrum morphism $\tilde{\tau}_d$ for polynomials. In \S~\ref{S3}, for all $d\geq2$, we provide a first description of the non-injective locus of $\tilde{\tau}_d$, i.e., the locus
$$\PNI_d:=\{([f],[g])\in\MPoly^d(\C)^2\mid\tilde{\tau}_d([f])=\tilde{\tau}_d([g]),[f]\neq[g]\},$$
which serves as a starting point for Conjecture~\ref{ratnoninj} for the polynomial case:
\begin{Thm}\label{intertwined}
	Let $d\geq2$ be an integer, and let $C\subset\MPoly^d\times\MPoly^d$ be an irreducible curve defined over $\overline{\Q}$ such that the projection of $C$ to each factor $\MPoly^d$ is non-constant. Assume that $C(\C)\setminus\PNI_d$ is finite. Then for every $t=([f_t],[g_t])\in C(\C)$, $f_t$ and $g_t$ are intertwined.
\end{Thm}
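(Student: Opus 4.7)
The plan is to extend the multiplier spectrum identity from the cofinite set to all of $C$, produce algebraic ``same multiplier'' correspondences between periodic points of $f_t$ and $g_t$ as $t$ varies in $C$, apply a relative dynamical André--Oort type input to force these correspondences to be swept out by a family of $F_t$-invariant curves, where $F_t:=(f_t,g_t):(\A^1)^2\to(\A^1)^2$, and finally spread the resulting invariant curves across every fibre, including the finitely many excluded ones.

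First I would observe that for each $n\geq 1$ the map $S_n:\MPoly^d(\C)\to\A^{N_{d,n}}(\C)$ is a morphism, so $\{t\in C: S_n(f_t)=S_n(g_t)\}$ is Zariski closed in $C$. By hypothesis this locus is cofinite, and since $C$ is irreducible it must equal $C(\C)$ for every $n$; thus $f_t$ and $g_t$ have identical multiplier spectra for \emph{every} $t\in C(\C)$, not merely outside a finite exceptional set. Shrinking to a Zariski open $C^\circ\subset C$ defined over $\overline{\Q}$, I may assume that for $t\in C^\circ$ the polynomials $f_t,g_t$ are non-conjugate, that $\Per_n(f_t)$ and $\Per_n(g_t)$ are multiplicity-free for $n$ up to some large $N$, and that the multiplier matching determines a bijection between these two fixed-point sets. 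Passing to a finite étale cover $\widetilde{C^\circ}\to C^\circ$ that trivialises these fixed-point schemes, I obtain, for each $n$, finitely many irreducible curves $W_{n,i}\subset \widetilde{C^\circ}\times\A^1\times\A^1$ whose fibres over $t$ consist of pairs $(z,w)$ with $z\in\Per_n(f_t)$, $w\in\Per_n(g_t)$, and $\rho_{f_t^{\circ n}}(z)=\rho_{g_t^{\circ n}}(w)$.

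The key step is to feed these correspondences into a relative dynamical André--Oort / unlikely-intersection statement for the family of split polynomial endomorphisms $F_t$. As $n$ grows, the $W_{n,i}$ accumulate a Zariski dense set of $F_t$-preperiodic points in every fibre $\{t\}\times(\A^1)^2$. Applying the polynomial analogue of \cite[Theorem 3.4]{JX23} (DAO for curves in $(\A^1)^2$ under split polynomial endomorphisms, combined with the Medvedev--Scanlon-type classification of invariant curves for split polynomial dynamics), the only way such a dense family of preperiodic points can persist under a one-parameter deformation is if there exists a non-constant family $\mathcal{Z}\subset\widetilde{C^\circ}\times(\A^1)^2$ of $F_t$-invariant algebraic curves whose projections to both $\A^1$-factors are dominant. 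This is precisely the data witnessing intertwining of $f_t$ and $g_t$ over $\widetilde{C^\circ}$; pushing forward along the finite cover yields an analogous $F_t$-invariant family over $C^\circ$.

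Finally I would take the Zariski closure of the resulting subvariety inside $C\times(\A^1)^2$ and restrict to each fibre; invariance under $F_t$ is a closed condition, so it persists at every $t\in C(\C)$, including the finitely many excluded parameters, and the dominance of each projection is preserved because the projection degrees are constant along irreducible components of the closure. Hence $f_t$ and $g_t$ are intertwined at every $t\in C(\C)$. The main obstacle is identifying and deploying the correct DAO-type input for a split polynomial endomorphism family on $(\A^1)^2$, and ruling out the degenerate alternative in which all matching curves $W_{n,i}$ reduce to graphs of a single algebraic map forcing $[f_t]=[g_t]$ generically --- a contradiction that requires the hypothesis that $C$ lies generically in $\PNI_d$, i.e.\ the non-conjugacy of $f_t,g_t$ on $C^\circ$.
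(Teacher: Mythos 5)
Your argument diverges fundamentally from the paper's in the middle step, and that step contains a genuine gap. In step 3 you claim that the multiplier-matching correspondences $W_{n,i}$ accumulate Zariski-dense sets of $F_t$-preperiodic points in every fibre, and that this accumulation forces a one-parameter family of $F_t$-invariant curves via a relative DAO-type input. But for any degree-$d\geq2$ polynomial, $\PrePer(f_t)$ is automatically Zariski dense in $\A^1$, so $\PrePer(f_t)\times\PrePer(g_t)$ is \emph{always} dense in $(\A^1)^2$: density of the $W_{n,i}$-fibres carries no information and constrains nothing. Moreover, the curves $W_{n,i}$ have finite fibres over $\widetilde{C^\circ}$, so each fibre is a finite point set, not a curve in $(\A^1)^2$; there is no DAO-type theorem (and certainly not the one in \cite{JX23}, which concerns PCF parameters in moduli space) that converts this into a genuine family of $F_t$-invariant curves whose fibrewise projections are dominant.

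The paper instead deduces Theorem \ref{intertwined} directly from Theorem \ref{polynoninj}, whose proof crucially engages with \emph{marked critical points}: Proposition \ref{multidet} shows that the multiplier spectrum detects the simultaneous periodicity of marked critical points (via the count of superattracting cycles), and this is exactly what makes Favre--Gauthier's entanglement theorem (Theorem \ref{FG}) applicable, producing a common quotient family $R$ after iteration. Pakovich's generalized Latt\`es theory (Theorem \ref{genla32}, Proposition \ref{polygenla}) then gives the explicit Ritt-move form in case~(2). Both cases of Theorem \ref{polynoninj} are intertwined by Favre--Gauthier's Theorem 3.39 and Proposition 3.41, and one closes up to all of $C$ via Ji--Xie's observation that the intertwined locus is Zariski closed in $\sM_d\times\sM_d$ --- a cleaner route than taking the Zariski closure of the family $\mathcal{Z}$, which risks degeneration of the fibres (loss of dominance, dimension jump) at the excluded parameters. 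Your outline omits the critical-orbit input entirely, and without it the passage from ``matching multiplier spectrum'' to ``invariant curve'' does not go through.
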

In fact, we prove a more precise version:
\begin{Thm}\label{polynoninj}
	Let $\phi_1,\phi_2:C\times\P^1\to C\times\P^1$ be two non-isotrivial (algebraic) families over $\overline{\Q}$ of degree $d$ ($d\geq2$) polynomials, parametrized by an affine irreducible curve $C$ over $\overline{\Q}$. Denote the induced morphisms also by $\phi_1,\phi_2:C\to\Poly^d$. Assume that $([\phi_1(t)],[\phi_2(t)])\in\PNI_d$ for all but finitely many $t\in C(\C)$. (Here, non-isotriviality means that $[\phi_j(C)]$ is not a single point in $\MPoly^d$ for $1\leq j\leq2$.)
	
	Then there exists a finite subset $S$ of $C(\C)$ such that, for every $t\in C(\C)\setminus S$, one of the following conditions holds:
	\begin{enumerate}[(1)]
		\item \label{polynoninj1} there exists $N\in\Z_{>0}$ such that $\phi_1(t)^{\circ N}\sim\phi_2(t)^{\circ N}$;
		\item \label{polynoninj2} there exist $N,k_1,k_2,l\in\Z_{>0}$ and $V(z)\in\C[z]\setminus\C$, with
		$$k_1\neq k_2,\gcd(d,k)=1,l\equiv1\Mod{k},V(0)\neq0$$
		such that
		$$\phi_1(t)^{\circ N}\sim z^l\cdot V(z^{k_1})^{k_1^\prime}\quad\text{and}\quad\phi_2(t)^{\circ N}\sim z^l\cdot V(z^{k_2})^{k_2^\prime},$$
		where $k:=\lcm(k_1,k_2)$ denotes the least common multiple of $k_1$ and $k_2$, and $k_j^\prime=k/k_j$ for $1\leq j\leq2$.
	\end{enumerate}
\end{Thm}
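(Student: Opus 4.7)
\textbf{Proof plan for Theorem \ref{polynoninj}.} The plan is to reduce to the intertwining conclusion provided by Theorem \ref{intertwined}, then invoke a structure theorem on intertwined pairs of polynomials (essentially Pakovich's classification of invariant curves for split endomorphisms of $\A^1\times\A^1$), and finally propagate the resulting pointwise statement to the family by a constructibility / irreducibility argument.

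First, I would apply Theorem \ref{intertwined} to the image curve $C':=(\phi_1,\phi_2)(C)\subset\MPoly^d\times\MPoly^d$. Non-isotriviality of both $\phi_j$ ensures that $C'$ is an irreducible curve defined over $\overline{\Q}$ with non-constant projections to each factor, and the standing hypothesis translates to $C'(\C)\setminus\PNI_d$ being finite. Theorem \ref{intertwined} then yields that, for \emph{every} $t\in C(\C)$, the polynomials $\phi_1(t)$ and $\phi_2(t)$ are intertwined.

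Next, for each such $t$ I would invoke the classification of pairs of intertwined polynomials. The key input (essentially due to Pakovich, built on Ritt's structure theorem for polynomial compositional identities) is the following: if $F,G\in\C[z]$ of degree $\geq 2$ are intertwined, then after passing to a common iterate $F^{\circ N},G^{\circ N}$, either $F^{\circ N}\sim G^{\circ N}$ via a finite chain of elementary transformations, or, up to equivalence, one reaches the normal form $z^l V(z^{k_1})^{k_1'},\; z^l V(z^{k_2})^{k_2'}$ satisfying exactly the arithmetic constraints of condition \ref{polynoninj2}. The point is that intertwinings which cannot be absorbed into Ritt moves between iterates are forced into the classical ``monomial layer'' family, where the invariant curve in $\A^1\times\A^1$ factors through a common map $z\mapsto z^k$.

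Finally, I would lift this pointwise dichotomy to the family. For each fixed tuple of discrete data $(N,k_1,k_2,l,m)$ with $m=\deg V$, the set of $t\in C(\C)$ for which condition \ref{polynoninj1} holds with that $N$, or condition \ref{polynoninj2} holds with those parameters (with $V$ eliminated by elimination theory), is cut out by algebraic conditions on the coefficients of $\phi_j(t)^{\circ N}$, hence is Zariski closed in $C$; since $C$ is an irreducible curve, each such subvariety is either finite or all of $C(\C)$. Because $C(\C)$ is uncountable and the set of discrete tuples is countable, if every $t\in C(\C)$ off a finite set is covered by some such subvariety, at least one of them must equal $C(\C)$, producing a uniform witness and the desired finite exceptional set $S$. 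The main obstacle is the classification step: extracting the precise dichotomy between Ritt-equivalence of iterates and the explicit $z^l V(z^{k_j})^{k_j'}$ normal form is a delicate application of Pakovich's analysis of polynomial invariant curves in $\A^1\times\A^1$, which in turn rests on Ritt's structure theorem and careful bookkeeping of the common compositional skeletons of $F$ and $G$.
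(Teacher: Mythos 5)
The first step of your plan applies Theorem~\ref{intertwined} to the image curve to conclude that $\phi_1(t)$ and $\phi_2(t)$ are intertwined. But in the paper, Theorem~\ref{intertwined} is \emph{proved as a corollary of} Theorem~\ref{polynoninj}: the proof in \S\ref{S3.2} starts ``Considering a non-empty affine subset of $C$, by Theorem~\ref{polynoninj}, there exists a finite subset $S\subset C(\C)$ such that \dots'' and then observes that each of the two cases of Theorem~\ref{polynoninj} implies intertwining, after which Zariski-closedness of the intertwining locus finishes the job. So invoking Theorem~\ref{intertwined} as the first reduction is exactly the deduction the paper makes in the opposite direction, and using it here begs the question. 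You would need an independent argument from ``$(\phi_1(t),\phi_2(t))\in\PNI_d$ for a Zariski-dense set of $t$'' to intertwining, and you do not supply one.

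The second step, the ``classification of pairs of intertwined polynomials'' into case (1) or case (2), is also not available in the form you assert. Intertwining is a strictly weaker hypothesis than equality of multiplier spectra --- the paper's Example~\ref{23move} gives $P=z^2(z^3+1)$ and $Q=z^2(z+1)^3$, which are a Ritt move and hence intertwined, yet have $S_2(P)\neq S_2(Q)$ --- and there is no clean theorem asserting that two intertwined degree-$d$ polynomials always fall, after iteration, into either equivalence or the explicit $z^l V(z^{k_j})^{k_j'}$ normal form with the arithmetic constraints $\gcd(d,k)=1$, $l\equiv1\pmod k$. In the paper these constraints are derived only after establishing a common semi-conjugacy \emph{as families} over $C$, which comes out of Theorem~\ref{FG} (Favre--Gauthier). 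And Theorem~\ref{FG} requires two entangled active dynamical pairs, whose existence is exactly the content of Proposition~\ref{multidet}: the multiplier spectrum determines $\sac(\phi(t))$, hence the sets $Z'(\phi_1)$ and $Z'(\phi_2)$ have infinite intersection, producing a critical point marked for both $\phi_1$ and $\phi_2$ that is simultaneously periodic at infinitely many parameters. This is precisely where the $\PNI_d$ hypothesis is exploited, and your plan skips it entirely by delegating to Theorem~\ref{intertwined}. The subsequent bookkeeping with $\sO_2^{\tau_0}$, the good solution $(R,z^{k_1},z^{k_1},\tilde h_1)$ of $f\circ p = g\circ q$, the matching of $V_1 = V^{k_1'}$ and $V_2=\zeta V^{k_2'}$, and the final iteration to force $l\equiv 1\pmod k$ and $\zeta=1$ are the actual technical core, which your sketch acknowledges as ``delicate'' but does not address.

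Your final step (propagating the pointwise dichotomy to a uniform $N$ over a cofinite subset of $C$ via constructibility and the uncountability of $C(\C)$) is reasonable in spirit and is close to what the paper obtains through Theorem~\ref{FG} and its quantitative refinement in Remark~\ref{Remmainpf}, though note the theorem as stated allows $N$ to depend on $t$, so this step is optional. In summary: remove the appeal to Theorem~\ref{intertwined}, and replace the ``classification of intertwined pairs'' with the actual chain Proposition~\ref{multidet} $\to$ Theorem~\ref{FG} $\to$ Theorem~\ref{genla32} $\to$ Proposition~\ref{polygenla} and Theorem~\ref{zngood}, or the argument is not a proof.
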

\begin{Rem}
	In Theorem~\ref{polynoninj} \eqref{polynoninj2}, we have
	$$(\phi_1(t)^{\circ N})_\min\sim (\phi_2(t)^{\circ N})_\min;$$
	see Remark~\ref{Remminequiv} for the precise meaning.
	
	In \eqref{polynoninj2}, if $k_j^\prime>1$ ($1\leq j\leq2$), then $z^l\cdot V(z^{k_j})^{k_j^\prime}$ is a generalized Latt\`es map in the sense of Pakovich \cite{Pakgenla} (see Definition~\ref{genla} and Proposition~\ref{polygenla}). Note that at least one $k_j^\prime>1$ because $k_1\neq k_2$.
\end{Rem}
\begin{Rem}\label{polynoninjRem}
Let $K$ be a number field such that $\phi_1$, $\phi_2$, and $C$ are defined over $K$. Then in Theorem~\ref{polynoninj} \eqref{polynoninj1} or \eqref{polynoninj2}, we can choose $N$ to be bounded by a constant depending only on $[K:\Q]$ and $d$; see Remark~\ref{Remmainpf}.
\end{Rem}
The proof of Theorem~\ref{polynoninj} is divided into two steps. The first step is as follows. After a base change, we may assume that all the critical points of $\phi_1$ and $\phi_2$ are marked. It is well-known that postcritically finite (PCF) parameters do not form families in $\MPoly^d$. We can argue that the multiplier spectrum determines the periodicity of critical points; see Proposition~\ref{multidet} for a precise statement. Then from $\phi_1$ and $\phi_2$ we can construct two entangled active dynamical pairs in the sense of Favre and Gauthier \cite{FG22}, still denoted by $\phi_1$ and $\phi_2$ parametrized by $C$. Applying a theorem of Favre and Gauthier (Theorem~\ref{FG}), there exists a family $R$ of polynomials parametrized by a non-empty Zariski open subset $U\subseteq C$ that is semi-conjugate to $\phi_1^{\circ N}$ and $\phi_2^{\circ N}$, for some integer $N\geq1$. In the second step, we need some results of Pakovich on generalized Latt\`es maps \cite{Pakgenla}. Set $S=(C\setminus U)(\C)$, which is a finite set. Fix $t\in U(\C)$. Assume that \eqref{polynoninj1} does not hold for $t$. Then a result of Pakovich \cite{Pakgenla} (see Theorem~\ref{genla32}) implies that $R(t)$ is a generalized Latt\`es map. By the classification of generalized Latt\`es maps that are polynomials (Proposition~\ref{polygenla}) and some elementary arguments, we conclude that \eqref{polynoninj2} holds for $t$, after possibly further iteration.

\subsection{Organization of the article}
In \S~\ref{SPak}, we provide a brief introduction to orbifolds and generalized Latt\`es maps based on \cite{Pakgenla}, as needed for \S~\ref{S2}--\ref{S3}.

In \S~\ref{S2}, we explore the generic injectivity of the multiplier spectrum morphism for polynomials. In \S~\ref{secpresimple}, we introduce the notion of pre-simple polynomials and study their properties, which are essentially done by Pakovich \cite{Paksimple}. Then we deduce Theorem~\ref{poly33S1}. In \S~\ref{secdeg23}, we study general polynomials of degree $2\leq d\leq3$ and obtain Proposition~\ref{poly33deg23intro}, following the ideas in \cite{Pakdeg23}. In \S~\ref{S2.3}, we deduce Theorem~\ref{polygeninj} from Theorem~\ref{poly33S1} and Proposition~\ref{poly33deg23intro}, following Ji and Xie \cite{JX23}. Finally, in \S~\ref{secZDO}, we present a proof of the Zariski-dense orbit conjecture for a general split polynomial endomorphism on $\A^2$ with all factors of degree $d\geq2$ (Theorem~\ref{presimapp}).

In \S~\ref{S3.1}, we recall the notion of critically marked polynomials and state a theorem of Favre and Gauthier (Theorem~\ref{FG}), which is essential for the proof of Theorem~\ref{polynoninj}. Then we proceed to complete the proofs of Theorems~\ref{polynoninj}~and~\ref{intertwined} in \S~\ref{S3.2}. We study polynomial pairs related to Theorem~\ref{polynoninj} \eqref{polynoninj2} in \S~\ref{S3.3}. Precisely, we consider polynomial pairs of the form $(z^r R(z)^k,z^r R(z^k))$, called \emph{Ritt moves} in this article. We show that $z^r R(z)^k$ and $z^r R(z^k)$ have the same multiplier spectrum over an arithmetic progression under specific conditions (Theorem~\ref{Rittmove}) and that they always have the same set of multipliers up to powers (Proposition~\ref{multidep}).

In \S~\ref{S4}, we present many problems and questions for future study. In \S~\ref{S4.1}, multiplier spectrum over arithmetic progressions and the related notion of stable multiplier spectrum are considered. In \S~\ref{S4.2}, we present several directions to generalize Theorem~\ref{polynoninj}.

\subsection*{Acknowledgement}
The author would like to express his great gratitude to his advisor, Professor Junyi Xie, for the constant encouragement. The author is supported by NSFC Grant (No.~12271007). The author would like to thank Fedor Pakovich and Valentin Huguin for helpful comments on the first version of this paper.

\section{Orbifolds and generalized Latt\`es maps}\label{SPak}
This section collects definitions and results concerning orbifolds and generalized Latt\`es maps necessary for our purposes. The theory of generalized Latt\`es maps (based on orbifolds) was primarily developed by Pakovich; see \cite{Pakgenla,Paksemiconj,Pakg01,PakgMos,Pakgfinite,Pakinvvar}. Here, we restrict our attention to orbifolds on the compact Riemann surface $\widehat{\C}=\P^1(\C)$, which suffices for our purposes.
\begin{Def}
	An \emph{orbifold} $\sO$ on $\widehat{\C}$ is defined by a \emph{ramification function} $\nu:\widehat{\C}\to\Z_{>0}$ such that $\{z\in\widehat{\C}\colon\nu(z)>1\}$ is finite. We consider only \emph{good orbifolds}, i.e., those satisfying $\#\{z\in\widehat{\C}\colon\nu(z)>1\}\neq1$, and if $\{z\in\widehat{\C}\colon\nu(z)>1\}=\{z_1,z_2\}$ has cardinality $2$, then $\nu(z_1)=\nu(z_2)$. For an orbifold $\sO$ on $\widehat{\C}$ with ramification function $\nu$, its \emph{Euler characteristic} is defined as
	$$\chi(\sO):=2+\sum_{z\in\widehat{\C}}\left(\frac{1}{\nu(z)}-1\right),$$
	the \emph{set of singular points} is $c(\sO)=\{z\in\widehat{\C}\colon\nu(z)>1\}$, and the \emph{signature} is the multiset $\nu(\sO)=\{\nu(z)\colon z\in c(\sO)\}$ of cardinality $\#c(\sO)$. The orbifold $\sO$ is called \emph{ramified} if its ramification function $\nu$ is not the constant function $1$ on $\widehat{\C}$.
\end{Def}
\begin{Def}
	Let $\sO_1$ and $\sO_2$ be two orbifolds on $\widehat{\C}$ with ramification functions $\nu_1$ and $\nu_2$, respectively, and let $f:\widehat{\C}\to\widehat{\C}$ be a non-constant rational map.
	\begin{enumerate}[(1)]
		\item We say $f:\sO_1\to\sO_2$ is a \emph{covering map} between orbifolds if for every $z\in\widehat{\C}$, we have
		$$\nu_2(f(z))=\nu_1(z)\cdot\deg_z(f),$$
		where $\deg_z(f)$ denotes the local degree of $f$ at $z$.
		\item We say $f:\sO_1\to\sO_2$ is a \emph{minimal holomorphic map} between orbifolds if for every $z\in\widehat{\C}$, we have
		$$\nu_2(f(z))=\nu_1(z)\cdot\gcd(\deg_z(f),\nu_2(f(z))).$$
	\end{enumerate}
\end{Def}
Clearly, every covering map between orbifolds is also a minimal holomorphic map.

A well-known equivalent description of Latt\`es maps is that a rational map $A:\widehat{\C}\to\widehat{\C}$ of degree $\geq2$ is a Latt\`es map if and only if there exists a ramified orbifold $\sO$ on $\widehat{\C}$ such that $A:\sO\to\sO$ is a covering map between orbifolds. This motivates the following definition:
\begin{Def}\label{genla}
A rational map $A:\widehat{\C}\to\widehat{\C}$ of degree $\geq2$ is called a \emph{generalized Latt\`es map} if there exists a ramified orbifold $\sO$ on $\widehat{\C}$ such that $A:\sO\to\sO$ is a minimal holomorphic map between orbifolds.
\end{Def}
From the definition, it is easy to check that all exceptional rational maps of degree $\geq2$ are generalized Latt\`es maps. Based on the proof of \cite[Theorem~7.2]{Pakgenla}, we obtain the following complete description of polynomials that are generalized Latt\`es maps:
\begin{Prop}\label{polygenla}
	Let $f\in\C[z]$ be a polynomial of degree at least $2$.
	\begin{enumerate}[(1)]
		\item \label{polygenla1} If $f$ is exceptional, then $f$ is a generalized Latt\`es map.
		\item \label{polygenla2} Suppose $f$ is non-exceptional. If $\sO$ is a ramified orbifold on $\widehat{\C}$ such that $f:\sO\to\sO$ is a minimal holomorphic map between orbifolds, then $\nu(\sO)=\{n,n\}$ for some integer $n\geq2$, and there exist $r\in\Z_{>0}$ with $\gcd(r,n)=1$ and a polynomial $R\in\C[z]\setminus\C$ such that $f$ is conjugate to $z^r R(z)^n$. In fact, $f$ is a generalized Latt\`es map if and only if there exist integers $r\geq1$ and $n\geq2$ with $\gcd(r,n)=1$, and a polynomial $R\in\C[z]$ such that $f$ is conjugate to $z^r R(z)^n$.
	\end{enumerate}
\end{Prop}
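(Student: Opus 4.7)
For part (1), the plan is to exhibit an explicit good orbifold witnessing the generalized Latt\`es property for each exceptional type. For $f(z)=z^{\pm d}$, take $\sO$ with $\nu(0)=\nu(\infty)=m$ for any $m\geq 2$ coprime to $d$ and $\nu\equiv 1$ elsewhere; this is a good orbifold of signature $\{m,m\}$, and a direct check of $\nu(f(z))=\nu(z)\gcd(\deg_z f,\nu(f(z)))$ at $0,\infty$ (using $\gcd(d,m)=1$) shows $f$ is a minimal holomorphic self-map. For $f=\pm T_d$, take $\sO$ with $\nu(1)=\nu(-1)=2$, a good orbifold of signature $\{2,2\}$ on which $\pm T_d$ is a covering map; this is the classical Chebyshev orbifold. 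Latt\`es maps are generalized Latt\`es maps by construction (they are covering self-maps of the relevant Euclidean orbifold).

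For part (2), I split into the two implications. The \emph{if} direction is a direct computation: given $f(z)=z^r R(z)^n$ with $\gcd(r,n)=1$, $n\geq 2$, and $R$ a non-constant polynomial, take $\sO$ with $\nu(0)=\nu(\infty)=n$ and $\nu\equiv 1$ elsewhere, which is good. The minimal holomorphic condition at $0$ reduces to $\gcd(r,n)=1$; at $\infty$, using $d=r+n\deg R$, it reduces to $\gcd(d,n)=\gcd(r,n)=1$; at each zero $z_0\neq 0$ of $R$ of multiplicity $m$, using $\deg_{z_0}f=nm$ and $f(z_0)=0$, it reduces to $n=\gcd(nm,n)$; and it is trivial elsewhere. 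Hence $f:\sO\to\sO$ is minimal holomorphic with $\nu$ non-constant, so $f$ is a generalized Latt\`es map. The same verification shows the slightly more general form in the ``in fact'' part of the iff.

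For the \emph{only if} direction, assume $f$ is a non-exceptional polynomial of degree $d\geq 2$ and $f:\sO\to\sO$ is a minimal holomorphic self-map for some good $\sO$ with non-constant $\nu$. Two immediate consequences: the inequality $\nu(f(z))\geq \nu(z)$ shows that $c(\sO)$ is forward-invariant, hence finite and preperiodic; and at $\infty$, the equation becomes $\gcd(d,\nu(\infty))=1$. The main obstacle is to establish that the signature must be $\{n,n\}$. I would appeal to the orbifold Riemann--Hurwitz formula together with the classification of good orbifolds on $\widehat{\C}$ admitting a minimal holomorphic self-map of degree $\geq 2$, following Pakovich \cite{Pakgenla}. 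Every Euclidean signature ($\{2,2,2,2\}$, $\{3,3,3\}$, $\{2,4,4\}$, $\{2,3,6\}$) forces $f$ to be Latt\`es, which is impossible for a polynomial: any $S=\pi^{-1}(\infty)$ would be totally invariant under the corresponding elliptic curve endomorphism, but an isogeny of degree $\geq 2$ is \'etale so $|[n]^{-1}(S)|=n^2|S|>|S|$. Every spherical signature with three or more singular points ($\{2,2,k\}$ with $k\geq 3$, $\{2,3,3\}$, $\{2,3,4\}$, $\{2,3,5\}$), combined with the total invariance of $\infty$ and the forward invariance of $c(\sO)$, forces $f$ to be conjugate to a monomial or Chebyshev polynomial; this is the delicate step and requires a careful case analysis of the possible orbit structures on $c(\sO)$.

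Once the signature is $\{n,n\}$, the remainder is explicit. Let $c(\sO)=\{p_1,p_2\}$. Forward invariance plus $f(\infty)=\infty$ forces $\infty\in\{p_1,p_2\}$: otherwise $\nu(\infty)=1$ and both $p_i\in\C$, in which case $f$ permutes or collapses $\{p_1,p_2\}$, and a preimage count using $\gcd(\deg_{p_i}f,n)=1$ together with $n\mid\deg_{z_0}f$ for every zero $z_0\notin c(\sO)$ of $f-p_i$ forces $f$ to be conjugate to $\pm T_d$, again exceptional. So $p_1=\infty$, and $p_2\in\C$ is a fixed point of $f$. Conjugating by an element of $\Aff$ to place $p_2=0$, we have $f(0)=0$ with local degree $r$ satisfying $\gcd(r,n)=1$, so $f(z)=z^r g(z)$ with $g(0)\neq 0$. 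For any zero $z_0\neq 0$ of $f$, the minimality condition at $z_0$ yields $n\mid\deg_{z_0}f$, so the multiplicity is divisible by $n$. Collecting the other zeros gives $f(z)=c\cdot z^r R(z)^n$ for some polynomial $R$ and constant $c\in\C^\times$; a final scaling $z\mapsto\lambda z$ absorbs $c$ to produce $f(z)=z^r R(z)^n$. Non-exceptionality forces $R$ to be non-constant, since otherwise $f\sim z^r$ would be a monomial map.
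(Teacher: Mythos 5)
The paper does not give its own proof of this proposition; it simply cites \cite[Proof of Theorem 7.2]{Pakgenla}. Your proof takes a different and more explicit route: you exhibit concrete orbifolds for part (1), verify the minimal-holomorphic identity point by point for the ``if'' direction of part (2), and then derive the normal form $z^r R(z)^n$ directly from the identity $\nu(f(z))=\nu(z)\gcd(\deg_z f,\nu(f(z)))$ and the zero/pole structure once the signature is known to be $\{n,n\}$. This last derivation (forward invariance of $c(\sO)$, the reduction to $\infty\in c(\sO)$, placing the other singular point at $0$, and reading off $\gcd(r,n)=1$ and $n\mid\deg_{z_0}f$ at the remaining zeros) is correct and is genuinely more informative than the paper's bare citation. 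Your computations for part (1) and the ``if'' direction of part (2) are also correct, modulo the irrelevant inclusion of $z^{-d}$ (not a polynomial) and of Latt\`es maps (never polynomials) in part (1).

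Two points to flag. First, the sentence ``Every Euclidean signature ... forces $f$ to be Latt\`es'' glosses over a real step: you only know $f:\sO\to\sO$ is \emph{minimal holomorphic}, not a covering, so you must invoke the fact (from Pakovich's theory, using orbifold Riemann--Hurwitz and $\chi(\sO)=0$) that in the Euclidean case a minimal holomorphic self-map is automatically a covering map; ``by construction'' is misleading here. Your argument that a Latt\`es map cannot be a polynomial (total invariance of $\pi^{-1}(\infty)$ under an unramified isogeny of degree $\geq 2$) is correct. Second, you openly delegate the remaining signature classification — ruling out the spherical signatures with three or more singular points — to Pakovich's case analysis. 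That is acceptable given that the paper does the same, but note that this is precisely where the substance of the result lives; a self-contained proof would need to carry out the orbit analysis on $c(\sO)$ that you only gesture at. As it stands, your proof is a correct and more detailed reworking of Pakovich's argument with the hardest combinatorial step still outsourced.
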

Using \cite[Theorem~1.1]{Pakgenla}, we easily obtain the following observation:
\begin{Lem}
	Let $f,g\in\C(z)\setminus\C$ be rational maps of degree $d\geq2$ such that $f\geq g$. If $f$ is a generalized Latt\`es map, then so is $g$.
\end{Lem}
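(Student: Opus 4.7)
The plan is to transport the orbifold structure that makes $f$ a minimal holomorphic self-map along the semi-conjugacy $\pi$, obtaining an orbifold structure on the target that makes $g$ minimal holomorphic. Let $\sO$ be an orbifold on $\widehat{\C}$ with non-constant ramification function $\nu$ such that $f:\sO\to\sO$ is minimal holomorphic, and let $\pi:\widehat{\C}\to\widehat{\C}$ be the non-constant rational map realizing $f\geq_\pi g$, so that $\pi\circ f=g\circ\pi$.

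First I would construct a pushforward orbifold $\sO'$ on $\widehat{\C}$, with ramification function $\nu'$ chosen so that $\pi:\sO\to\sO'$ is itself a minimal holomorphic map, i.e.\
\[
\nu'(\pi(z))=\nu(z)\cdot\gcd(\deg_z(\pi),\nu'(\pi(z)))\quad\text{for every }z\in\widehat{\C}.
\]
The crucial point is that such a $\nu'$ exists, is consistent across all preimages of a given point, defines a good orbifold, and remains non-constant (using that $\pi$ is non-constant and surjective and that $\nu$ is non-constant). This is exactly the kind of pushforward statement furnished by \cite[Theorem 1.1]{Pakgenla} for orbifolds invariant under a minimal holomorphic self-map.

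Second, I would verify that $g:\sO'\to\sO'$ is minimal holomorphic. Given $w\in\widehat{\C}$, choose $z\in\pi^{-1}(w)$ and write $y=f(z)$ and $w'=g(w)=\pi(y)$. From $g\circ\pi=\pi\circ f$ and multiplicativity of local degrees one has $\deg_w(g)\deg_z(\pi)=\deg_z(f)\deg_y(\pi)$. Combining this identity with the three minimal-holomorphic identities for $f$ at $z$ and for $\pi$ at $z$ and at $y$, a straightforward divisibility chase yields
\[
\nu'(w')=\nu'(w)\cdot\gcd(\deg_w(g),\nu'(w')),
\]
which is the minimal-holomorphic condition for $g$ at $w$. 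Surjectivity of $\pi$ propagates this to every $w$, and since $\nu'$ is non-constant, $g$ is a generalized Latt\`es map.

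The main obstacle is the first step: pushing an orbifold forward along a non-injective dominant morphism requires the ramification condition to hold simultaneously at every preimage of a given target point, and this consistency is non-trivial. Establishing it is precisely where I expect to invoke \cite[Theorem 1.1]{Pakgenla}; once the pushforward $\sO'$ is in hand, the second step is a routine diagram-chase with local degrees.
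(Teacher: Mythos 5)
Your route is different from the paper's: the paper gives no argument at all here, stating that the lemma follows directly from \cite[Theorem 1.1]{Pakgenla}, which (in Pakovich's form ``$A\circ X=X\circ B$ with $B$ a generalized Latt\`es map implies $A$ is a generalized Latt\`es map'') is already essentially the statement of the lemma after matching $A=g$, $X=\pi$, $B=f$. Your proposal instead tries to re-derive that result by pushing the orbifold forward, but it has a genuine gap.

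The claimed ``routine diagram-chase'' in the second step does not follow from the data you list. Consider local data $\nu(z)=1$, $\nu(f(z))=2$, $\deg_z(f)=2$, $\deg_z(\pi)=2$, $\deg_{f(z)}(\pi)=1$, and suppose the pushforward satisfies $\nu'(\pi(z))=1$, $\nu'(\pi(f(z)))=2$. The degree identity $\deg_{\pi(z)}(g)\,\deg_z(\pi)=\deg_z(f)\,\deg_{f(z)}(\pi)$ forces $\deg_{\pi(z)}(g)=1$. All three minimal-holomorphic identities you invoke hold: $f$ at $z$ gives $2=1\cdot\gcd(2,2)$; $\pi$ at $z$ gives $1=1\cdot\gcd(2,1)$; $\pi$ at $f(z)$ gives $2=2\cdot\gcd(1,2)$. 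Yet the desired identity for $g$ at $\pi(z)$ reads $2=1\cdot\gcd(1,2)=1$, which is false. So the four local identities do \emph{not} imply minimal holomorphy of $g$; the ``chase'' breaks. The reason this is not a counterexample to the lemma itself is that the defining equation $\nu'(\pi(z))=\nu(z)\gcd(\deg_z(\pi),\nu'(\pi(z)))$ does not determine $\nu'$: in the above, $\nu'(\pi(z))$ could have been chosen to be $2$ rather than $1$, and then the computation goes through. So everything hinges on a particular, canonically chosen pushforward, and proving that such a choice exists, is globally consistent, and is compatible with $g$ is precisely the content of Pakovich's theorem. Calling this last step routine, while simultaneously invoking that same Theorem 1.1 as a black box for the construction, renders the proposal circular: you are in effect assuming the result you are trying to prove. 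If you want a self-contained argument you would need to specify the pushforward concretely (e.g.\ something in the spirit of $\sO_2^{\pi}$ enriched by $\nu$) and then redo the local analysis with that specific $\nu'$; as written, the outline does not close.
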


One can associate two natural orbifolds with every rational map:
\begin{Def}
	Let $f:\widehat{\C}\to\widehat{\C}$ be a non-constant rational map.
	\begin{enumerate}[(i)]
		\item The orbifold $\sO_2^f$ is defined by the ramification function
		$$\nu_2(z)=\lcm\{\deg_w(f)\colon w\in f^{-1}(z)\},\quad z\in\widehat{\C}.$$
		\item The orbifold $\sO_1^f$ is defined by the ramification function
		$$\nu_1(z)=\nu_2(f(z))/\deg_z(f),\quad z\in\widehat{\C}.$$
	\end{enumerate}
\end{Def}

For any non-constant rational map $f:\widehat{\C}\to\widehat{\C}$, the map $f:\sO_1^f\to\sO_2^f$ is clearly a covering map between orbifolds.

Let $f\in\C(z)\setminus\C$ be a non-constant rational map. A rational map $f_0\in\C(z)$ is called a \emph{compositional right factor} of $f$ if there exists $f_1\in\C(z)$ such that $f=f_1\circ f_0$. Similarly, we define \emph{compositional left factors}. Pakovich \cite[\S~2]{Pakgenla} introduced the notion of good solutions for certain equations:
\begin{Def}\label{goodsol}
	Consider the functional equation
	\begin{equation}\label{fpgq}
		f\circ p=g\circ q,
	\end{equation}
	where $f,p,g,q\in\C(z)\setminus\C$. A solution $(f,p,g,q)$ of \eqref{fpgq} is called \emph{good} if $p$ and $q$ share no common compositional right factor of degree at least $2$, and the fiber product of $f$ and $g$ consists of a unique component.
\end{Def}
Pakovich \cite[Theorem~5.1]{Pakgenla} described good solutions of \eqref{fpgq} with $p(z)=g(z)=z^n$ ($n\geq2$) as follows:
\begin{Thm}\label{zngood}
	Let $n\geq2$ be an integer and let $A,F\in\C(z)$ be rational maps of degree at least $2$ such that $A\circ z^n=z^n\circ F$. Then the following are equivalent:
	\begin{enumerate}[(1)]
		\item The solution $(A,z^n,z^n,F)$ of \eqref{fpgq} is good.
		\item \label{zngood2} There exist $R\in\C(z)\setminus\C$ and $r\in\Z_{>0}$ with $\gcd(r,n)=1$ such that
		$$A(z)=z^r R(z)^n\quad\text{and}\quad F(z)=z^r R(z^n).$$
	\end{enumerate}
\end{Thm}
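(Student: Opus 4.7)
The plan is to treat the two directions separately: $(2)\Rightarrow(1)$ by direct computation, and $(1)\Rightarrow(2)$ by exploiting the $\mu_n$-symmetry inherent in the equation $A\circ z^n=z^n\circ F$. For $(2)\Rightarrow(1)$, I would first verify the functional equation by substitution, noting $A(z^n)=z^{rn}R(z^n)^n=F(z)^n$. The fiber-product condition reduces to irreducibility of the curve $y^n=x^r R(x)^n$; after the change of variable $u=y/R(x)$ it becomes $u^n=x^r$, which is irreducible over $\C(x)$ exactly when $\gcd(r,n)=1$ (parametrize by $t\mapsto(t^n,t^r)$). For the no-common-right-factor condition, I would use the classical fact that, up to composition with linears, any polynomial decomposition of $z^n$ is of the form $z^{n/b}\circ z^b$ with $b\mid n$; so a common right factor of degree $b\ge 2$ forces $F$ to be a rational function of $z^b$. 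Writing $R(z)=z^s R_0(z)$ with $R_0(0)\notin\{0,\infty\}$, the order of $F(z)=z^{r+ns}R_0(z^n)$ at $0$ equals $r+ns$, and divisibility by $b$ combined with $b\mid n$ forces $b\mid r$, contradicting $\gcd(r,n)=1$.

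For $(1)\Rightarrow(2)$, I would exploit the $\mu_n$-symmetry: for any $\zeta\in\mu_n$, the identity $F(\zeta z)^n=A((\zeta z)^n)=A(z^n)=F(z)^n$ shows that the rational function $F(\zeta z)/F(z)$ takes values in $\mu_n$, hence is a constant $\omega_\zeta\in\mu_n$. The assignment $\zeta\mapsto\omega_\zeta$ is a character of $\mu_n$, so $\omega_\zeta=\zeta^r$ for a unique $r\in\Z/n\Z$. If $r\equiv 0\Mod{n}$, then $F$ is $\mu_n$-invariant and factors as $F=\tilde F\circ z^n$, giving $z^n$ itself as a common compositional right factor of degree $n\ge 2$, contradicting goodness. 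Otherwise the rational function $F(z)/z^r$ is $\mu_n$-invariant, hence of the form $R(z^n)$ for some $R\in\C(z)$, yielding $F(z)=z^r R(z^n)$; the functional equation then forces $A(w)=w^r R(w)^n$.

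The step I expect to be the main obstacle is ensuring $\gcd(r,n)=1$. Suppose $d:=\gcd(r,n)\ge 2$ and write $r=dr'$, $n=dn'$; then $F(z)=(z^d)^{r'}R((z^d)^{n'})$ is a rational function of $z^d$, while $z^n=z^{n'}\circ z^d$. Hence $z^d$ is a common compositional right factor of degree $d\ge 2$ of $z^n$ and $F$, once more contradicting goodness, so $d=1$. A small technical point is that the representation $F(z)=z^r R(z^n)$ is not unique, since one can shift $r$ by a multiple of $n$ with a corresponding adjustment of $R$; I would fix the convention $1\le r\le n-1$ coming from the character $\zeta\mapsto\zeta^r$ before running this coprimality argument.
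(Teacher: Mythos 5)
The paper itself does not prove this theorem; it quotes it from Pakovich (\cite[Theorem~5.1]{Pakgenla}), so there is no in-paper argument to compare against. Your blind proof is self-contained and, as far as I can check, correct. The $(1)\Rightarrow(2)$ direction is the expected one: $F(\zeta z)^n=F(z)^n$ for $\zeta\in\mu_n$ forces $F(\zeta z)/F(z)$ to be a constant in $\mu_n$, the assignment $\zeta\mapsto F(\zeta z)/F(z)$ is a character $\zeta\mapsto\zeta^r$, the case $r\equiv 0\pmod n$ is killed by the no-common-right-factor hypothesis (it would make $z^n$ itself a common right factor), and then the $\mu_n$-invariance of $F(z)/z^r$ yields $F(z)=z^r R(z^n)$ and $A(w)=w^r R(w)^n$, with $\gcd(r,n)=1$ forced exactly as you say. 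Note that this direction uses only the ``no common right factor'' half of goodness and never invokes irreducibility of the fiber product; that is fine, but worth being explicit about. For $(2)\Rightarrow(1)$, the substitution $u=y/R(x)$ reducing $y^n=x^r R(x)^n$ to $u^n=x^r$ is a birational equivalence over $\C(x)$, and $u^n-x^r$ is irreducible over $\C(x)$ iff $\gcd(r,n)=1$, so the fiber-product condition is handled. For the right-factor condition, the fact you need is that a compositional right factor $W$ of $z^n$ with $\deg W=b\ge2$ is, up to left M\"obius composition, $z^b$ with $b\mid n$ (this is a statement about \emph{rational} right factors, not just polynomial ones: both $0$ and $\infty$ must be totally ramified for $W$, and $z^{-b}=(1/z)\circ z^b$ is absorbed into the left M\"obius). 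Then $F$ being a function of $z^b$ forces $b\mid\mathrm{ord}_0 F=r+ns$, hence $b\mid r$, contradicting coprimality. The only imprecision is the phrase ``polynomial decomposition of $z^n$''; you should say ``decomposition as a rational map'' and give the two-totally-ramified-points argument (or cite it), but the substance is right. Your handling of the non-uniqueness of $r$ (fixing $1\le r\le n-1$ via the character) is also the right normalization.
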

\begin{Rem}\label{zngoodpoly}
	When both $A$ and $F$ are polynomials, we may take $R$ in \eqref{zngood2} to be a polynomial as well.
\end{Rem}
Pakovich \cite{Pakgenla} showed that rational maps that are not generalized Latt\`es maps exhibit ``better'' behavior than generalized Latt\`es maps. The following theorem, a special case of \cite[Theorem~3.2]{Pakgenla}, illustrates this:
\begin{Thm}[Pakovich]\label{genla32}
	Let $A,B,\pi\in\C(z)$ be rational maps of degree at least $2$ such that $A\circ\pi=\pi\circ B$. Then one of the following holds:
	\begin{enumerate}
		\item $B\sim A$;
		\item $A$ is a generalized Latt\`es map and there exist $\psi,\pi_0,B_0\in\C(z)\setminus\C$ satisfying:
		\begin{itemize}
			\item $\pi=\pi_0\circ\psi$ and $\deg(\pi_0)\geq2$;
			\item $B_0\circ\psi=\psi\circ B$, $A\circ\pi_0=\pi_0\circ B_0$, and $B\sim B_0$;
			\item $(A,\pi_0,\pi_0,B_0)$ is a good solution of \eqref{fpgq};
			\item Both $A:\sO_2^{\pi_0}\to\sO_2^{\pi_0}$ and $B_0:\sO_1^{\pi_0}\to\sO_1^{\pi_0}$ are minimal holomorphic maps between orbifolds;
			\item There exists $s\in\Z_{>0}$ such that $\psi$ is a compositional right factor of $B^{\circ s}$ and a compositional left factor of $B_0^{\circ s}$.
		\end{itemize}
	\end{enumerate}
\end{Thm}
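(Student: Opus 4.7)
The plan is to extract from the semiconjugacy $A\circ\pi=\pi\circ B$ the ``largest'' common compositional right factor of $\pi$ and the iterates of $B$, and then analyze the residual equation through the fiber-product/orbifold formalism set up in \S\ref{SPak}. First, I would iterate to obtain $A^{\circ k}\circ\pi=\pi\circ B^{\circ k}$ for every $k\geq1$. For each $k$, let $\psi_k$ denote a compositional right factor of $B^{\circ k}$ of maximal degree such that $\pi=\pi^{(k)}\circ\psi_k$ for some rational map $\pi^{(k)}$. Since $\deg(\psi_k)\leq\deg(\pi)$, the degrees stabilize at some $s\geq1$; set $\psi:=\psi_s$ and write $\pi=\pi_0\circ\psi$ and $B^{\circ s}=\beta\circ\psi$.

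Next, I would derive a semiconjugacy $B_0\circ\psi=\psi\circ B$. From $B^{\circ(s+1)}=B\circ(\beta\circ\psi)=(\beta\circ\psi)\circ B$, together with the stabilization $\deg(\psi_{s+1})=\deg(\psi_s)$, the map $\psi\circ B$ should factor through $\psi$, yielding $B_0$ with $\psi\circ B=B_0\circ\psi$. Substituting into the original equation and canceling $\psi$ on the right gives $A\circ\pi_0=\pi_0\circ B_0$. Iterating the semiconjugacy one also obtains $B_0^{\circ s}=\psi\circ\beta$, exhibiting $\psi$ as a compositional left factor of $B_0^{\circ s}$; combined with $B^{\circ s}=\beta\circ\psi$, this is a single elementary transformation, so $B^{\circ s}\sim B_0^{\circ s}$. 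A standard Pakovich-style argument then upgrades this to $B\sim B_0$ by applying Ritt-type decomposition theorems to the chain of factorizations.

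Two cases according to $\deg(\pi_0)$ now arise. If $\deg(\pi_0)=1$, absorbing the automorphism into $\psi$ identifies $\pi$ with $\psi$ up to a M\"obius transformation and reduces the equation to an explicit elementary transformation between $A$ and $B$, yielding $B\sim A$ as in case $(1)$. If $\deg(\pi_0)\geq2$, I would verify that $(A,\pi_0,\pi_0,B_0)$ is a good solution in the sense of Definition \ref{goodsol}: any common compositional right factor of $\pi_0$ and $B_0$ of degree $\geq2$ would, composed with $\psi$, contradict the maximality of $\psi$, while an extra irreducible component of the fiber product of $A$ with itself along $\pi_0$ would descend to an extra common right factor by the same principle. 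Once the good-solution property is in place, the minimality of $A:\sO_2^{\pi_0}\to\sO_2^{\pi_0}$ and $B_0:\sO_1^{\pi_0}\to\sO_1^{\pi_0}$ follows from a direct analysis of local degrees using that $\sO_2^{\pi_0}$ is by definition the minimal orbifold making $\pi_0$ a covering. Since $\deg(\pi_0)\geq2$, at least one of $\sO_1^{\pi_0}$, $\sO_2^{\pi_0}$ carries a non-constant ramification function, and hence $A$ is a generalized Latt\`es map in the sense of Definition \ref{genla}.

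The hard part will be the maximality/descent argument underlying the verification of the good-solution property and the upgrade from $B^{\circ s}\sim B_0^{\circ s}$ to $B\sim B_0$. Making precise the claim that extra fiber-product components or extra common right factors must descend through the chosen factorization requires Pakovich's structural (Ritt-type) theorems on decompositions of rational maps, and is also where the dichotomy ``generic vs.\ generalized Latt\`es'' genuinely intervenes: it is exactly in the generalized Latt\`es regime that the descent argument can fail and produce the orbifold structure of case $(2)$ instead of collapsing back to case $(1)$.
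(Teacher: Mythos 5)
The paper itself does not prove this theorem; it states it as a special case of \cite[Theorem~3.2]{Pakgenla} and cites it without proof, so there is no ``paper's own proof'' to compare against. Your sketch attempts to re-derive Pakovich's structural result, and while the broad outline (extract a common right factor of $\pi$ and iterates of $B$, descend the semiconjugacy, verify the good-solution property, read off the orbifold structure) is faithful to Pakovich's philosophy, the two early steps on which everything else depends have genuine gaps.

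The first concerns the extraction of $\psi$ and $B_0$. You pick a right factor $\psi_k$ of $B^{\circ k}$ of maximal degree dividing $\pi$ and set $\psi=\psi_s$ once the degrees stabilize, then claim that stabilization $\deg(\psi_{s+1})=\deg(\psi_s)$ forces $\psi\circ B$ to factor through $\psi$. For polynomials the maximal common right factor is essentially unique up to M\"obius precomposition (Engstrom/Ritt), but this fails for general rational maps: the poset of compositional right factors need not be a lattice, and a ``maximal'' common right factor of $\pi$ and $B^{\circ s}$ is not canonical. Degree equality of $\psi_{s+1}$ and $\psi_s$ therefore does not give $\psi_{s+1}=\psi_s$ (even up to left M\"obius), and the defining relation $B_0\circ\psi=\psi\circ B$ you need does not follow. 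In Pakovich's construction $\psi$ is not a single maximal factor of $\pi$ against a high iterate of $B$: it is built iteratively, one elementary transformation at a time, by taking a common right factor $W_1$ of $\pi$ and $B$ itself, replacing $B$ by its elementary transform $B_1=W_1\circ U_1$ and $\pi$ by $\pi_1$, then repeating until no nontrivial common factor remains. The relation $\psi\circ B=B_0\circ\psi$ then holds by construction along the chain.

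The second gap is downstream. You observe that $B^{\circ s}=\beta\circ\psi$ and $B_0^{\circ s}=\psi\circ\beta$ give $B^{\circ s}\sim B_0^{\circ s}$, and then invoke ``a standard Pakovich-style argument'' to upgrade this to $B\sim B_0$. That implication is not a standard lemma, and $F^{\circ s}\sim G^{\circ s}$ does not imply $F\sim G$ in general; making that jump is precisely the kind of subtlety this paper is otherwise concerned with (cf.\ Theorem~\ref{polynoninj}(\ref{polynoninj2}), where non-equivalent maps become related only after iteration). In the correct argument $B\sim B_0$ is never deduced from a relation between iterates: it is immediate from the chain $B=B^{(0)}\sim B^{(1)}\sim\cdots\sim B^{(r)}=B_0$ produced by the iterative construction of $\psi$. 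The later parts of your plan---good-solution property by maximality, minimality of the orbifold maps, and $A$ generalized Latt\`es because $\deg\pi_0\geq2$ forces nontrivial ramification of $\sO_2^{\pi_0}$---are reasonable in outline, but they rest on the two steps above, which need to be replaced by the iterative construction rather than a single maximal-factor extraction.
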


\section{The multiplier spectrum morphism is generically injective for polynomials}\label{S2}

\subsection{Pre-simple polynomials and the case $d\geq4$}\label{secpresimple}
A particular class of rational maps, namely simple rational maps, plays a key role in the proof of Theorem~\ref{ratgeninj} in \cite{JX23} (see \cite[Theorem~3.3]{JX23}). This notion was introduced by Pakovich \cite{Paksimple}. A rational map $f\in\C(z)$ of degree $d\geq2$ is \emph{simple} if it has exactly $2d-2$ distinct critical values in $\P^1(\C)$. By the Riemann–Hurwitz formula, every $f\in\C(z)$ of degree $d$ has exactly $2d-2$ critical points counted with multiplicity. Hence, a simple rational map is a rational map with all critical points of multiplicity $1$ such that no two distinct critical points map to the same value. 

For rational maps, simplicity is a natural notion since simple rational maps of degree $d\geq2$ form a Zariski open and dense subset of $\Rat_d(\C)$. However, every polynomial $f\in\C[z]$ of degree $d\geq2$ has a critical point of multiplicity $d-1$ at $\infty$. Thus, $f$ is simple if and only if $d=2$. The notion of simplicity is thus ill-suited for polynomials. To overcome this, we introduce a modified notion for polynomials:
\begin{Def}
	A polynomial $f(z)\in\C[z]$ of degree $d\geq2$ is called \emph{pre-simple} if it has exactly $d-1$ distinct critical values in $\C$.
\end{Def}
Note that $d-1$ is the maximum possible number of distinct finite critical values for a polynomial of degree $d$. We show that most statements in \cite{Paksimple} concerning simple rational maps admit analogues for pre-simple polynomials. In particular, we obtain a polynomial version (Theorem~\ref{poly33}) of \cite[Theorem~3.3]{JX23}, which yields a proof of Theorem~\ref{polygeninj} for $d\geq4$. Most proofs in this subsection are minor modifications of those in \cite{Paksimple}; we provide complete details for the reader's convenience.

We first recall some notions about decomposing rational maps. Let $f\in\C(z)$ be a rational map of degree $d\geq2$. We say that $f$ is \emph{indecomposable} if $f=f_1\circ f_2$ in $\C(z)$ implies $\deg(f_1)=1$ or $\deg(f_2)=1$. A \emph{decomposition} of $f$ is a representation $f=f_1\circ\cdots\circ f_r$, where each $f_i\in\C(z)$ has degree $\geq2$. Two decompositions $f=f_1\circ\cdots\circ f_r$ and $f=g_1\circ\cdots\circ g_l$ of $f$ are called \emph{equivalent} if either $l=r=1$ and $f_1=g_1$, or $l=r\geq2$ and there exist $\nu_1,\dots,\nu_{r-1}\in\PGL_2(\C)$ such that $f_1=g_1\circ\nu_1$, $f_j=\nu_{j-1}^{-1}\circ g_j\circ\nu_j$ for $1<j<r$, and $f_r=\nu_{r-1}^{-1}\circ g_r$. A decomposition $f=f_1\circ\cdots\circ f_r$ of $f$ is called \emph{complete} if $f_j$ is indecomposable for every $1\leq j\leq r$. It is easy to see that any rational map of degree $d\geq2$ admits a complete decomposition by an induction on $d$.
\begin{Prop}\label{Mon}
	A pre-simple polynomial $f$ of degree $d\geq2$ is indecomposable (as a rational map) and satisfies $\Mon(f)\cong S_d$, where $S_d$ is the permutation group on $\{1,2,\dots,d\}$ and $\Mon(f)$ is the monodromy group of $f$.
\end{Prop}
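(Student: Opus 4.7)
The plan is to compute the ramification data of $f$ as a branched cover $f:\P^1\to\P^1$, read off its monodromy generators, and then apply the classical correspondence between decompositions of a rational map and block systems for its monodromy action.

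First I would analyze the critical points and values. Because $f$ is a polynomial, $\infty$ is a totally ramified fixed critical point contributing $d-1$ to the total ramification. By Riemann--Hurwitz the total ramification is $2d-2$, so the finite critical points contribute exactly $d-1$. The pre-simplicity hypothesis forces the $d-1$ finite critical values to be distinct, and this in turn forces each finite critical point to be simple (local degree $2$) and to lie over its own critical value. Thus the set of critical values of $f:\P^1\to\P^1$ is $V=\{v_1,\ldots,v_{d-1},\infty\}$; over each $v_i$ there is exactly one simple branch point and $d-2$ unramified preimages, and over $\infty$ the only preimage is $\infty$, with local degree $d$.

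Next I would extract the monodromy. The fundamental group $\pi_1(\P^1(\C)\setminus V)$ is generated by small loops $\gamma_1,\ldots,\gamma_{d-1},\gamma_\infty$ around each puncture subject to the single relation $\gamma_1\cdots\gamma_{d-1}\gamma_\infty=1$. The monodromy sends each $\gamma_i$ to a transposition $\tau_i\in S_d$ (by the simple branching over $v_i$) and $\gamma_\infty$ to a $d$-cycle $\sigma$ (by total ramification over $\infty$). The relation becomes $\tau_1\cdots\tau_{d-1}\sigma=1$, which shows that $\Mon(f)$ is generated by the transpositions $\tau_1,\ldots,\tau_{d-1}$ and contains the $d$-cycle $\sigma$.

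I would then invoke the following combinatorial fact: if $d-1$ transpositions in $S_d$ have product a $d$-cycle, they generate $S_d$. Consider the graph on $\{1,\ldots,d\}$ whose edges are the supports of the $\tau_i$. Since the subgroup they generate contains $\sigma$, it acts transitively, so the graph is connected; being connected on $d$ vertices with $d-1$ edges it is a spanning tree, and the transpositions along a spanning tree generate $S_d$. Hence $\Mon(f)\cong S_d$ in its standard action on $d$ points. Finally, the classical correspondence between (equivalence classes of) non-trivial decompositions $f=f_2\circ f_1$ with $\deg f_i\geq 2$ and non-trivial $\Mon(f)$-invariant block systems on a generic fiber shows that $f$ is indecomposable, because the natural action of $S_d$ is $2$-transitive and therefore primitive. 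I expect no serious obstacle; the only step needing genuine thought is the combinatorial lemma on transpositions and $d$-cycles, which the graph-connectedness argument dispatches cleanly.
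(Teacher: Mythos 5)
Your proof is correct but takes a genuinely different route from the paper's, essentially running the two halves of the proposition in the opposite logical order. You compute the full ramification portrait explicitly (transpositions over the $d-1$ finite critical values, a $d$-cycle over $\infty$), then use a combinatorial lemma (transpositions whose support graph is connected generate $S_d$; here the graph is a spanning tree, and connectedness is forced because the group, containing the $d$-cycle, is transitive) to get $\Mon(f)\cong S_d$ first, and finally deduce indecomposability from the $2$-transitivity, hence primitivity, of $S_d$ together with the Ritt/Galois correspondence between compositional factors and block systems. The paper instead proves indecomposability first and directly by a degree count: after normalizing $f=f_1\circ f_2$ so that both factors are polynomials, the inequality $N(f)\leq N(f_1)+N(f_2)$ on finite critical values yields $(m_1-1)(m_2-1)\leq 0$, a contradiction; it then observes that indecomposability makes $\Mon(f)$ primitive, notes that a simple finite critical value gives a transposition in the monodromy, and invokes Jordan's theorem (a primitive permutation group containing a transposition is the full symmetric group, Wielandt Thm.~13.3). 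Both routes are sound; the paper's indecomposability argument is more self-contained and avoids monodromy entirely, while your approach is more constructive about the monodromy generators and relies only on an elementary graph-theoretic fact rather than Jordan's primitivity theorem. One small point worth making explicit in your argument: a priori the transpositions $\tau_1,\dots,\tau_{d-1}$ could coincide, so the support graph could have fewer than $d-1$ distinct edges; you should note that transitivity (from the $d$-cycle $\sigma=(\tau_1\cdots\tau_{d-1})^{-1}$) forces the graph to be connected, which on $d$ vertices requires all $d-1$ transpositions to be distinct and the graph to be a tree.
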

\begin{proof}
	Let $f\in\C[z]$ be a pre-simple polynomial of degree $d\ge2$. Suppose $f$ is decomposable, so $f=f_1\circ f_2$ with $f_1,f_2\in\C(z)$ of degrees $m_1,m_2\geq2$, respectively. Then $d=m_1 m_2$. Since $\infty$ is a totally invariant fixed point of $f=f_1\circ f_2$, the point $a=f_2(\infty)$ satisfies $f_1^{-1}(\infty)=\{a\}$ and $f_2^{-1}(a)=\{\infty\}$. After composing with a suitable M\"obius transformation, we may assume $a=\infty$. Then both $f_1$ and $f_2$ are polynomials. The pre-simplicity of $f$ implies that the number $N(f)$ of critical values of $f$ in $\C$ equals $(2d-2)-(d-1)=d-1$. On the other hand, from $f^\prime=(f_1^\prime\circ f_2)\cdot f_2^\prime$ we see that
	$$m_1m_2-1=d-1=N(f)\leq N(f_1)+N(f_2)\leq m_1-1+m_2-1.$$
	Equivalently, $(m_1-1)(m_2-1)\leq0$, contradicting the assumption that $m_1,m_2\geq2$. Hence, $f$ is indecomposable.
	
	Since $f$ is indecomposable, its monodromy group $\Mon(f)$ is primitive. Fix a critical value $c$ of $f$ in $\C$. Since $f$ is pre-simple, the fiber $f^{-1}(c)$ contains exactly one critical point with local degree $2$ and $d-2$ unramified points; hence, the corresponding permutation in $\Mon(f)$ is a transposition. By \cite[Theorem~13.3]{PermGp}, $\Mon(f)$ must be the full symmetric group, so $\Mon(f)\cong S_d$.
\end{proof}

We now introduce algebraic curves associated with polynomial pairs, following \cite[\S~2]{Paksimple}.
\begin{Def}
	Let $F(z),H(z)\in\C[z]$ be polynomials of degrees $m,n\geq1$, respectively. Let $h_{F,H}$ be the algebraic curve (in $\A^2$) defined by $F(x)-H(y)=0$.
\end{Def}
The genus of the curve $h_{F,H}$ can be computed explicitly as follows (see \cite{Fried} or \cite{Pak11}):
\begin{Prop}\label{genus}
	Let $F(z),H(z)\in\C[z]$ be polynomials of degrees $m,n\geq1$, respectively, and let $S=\{z_1,\dots,z_r\}$ be the union of the sets of critical values of $F$ and $H$ in $\C$. For $1\leq i\leq r$, let $(a_{i,1},\dots,a_{i,p_i})$ and $(b_{i,1},\dots,b_{i,q_i})$ denote the multiplicities of $F$ and $H$ at the points in $F^{-1}(z_i)$ and $H^{-1}(z_i)$, respectively. Then the genus of $h_{F,H}$ is determined by
	$$2-2g(h_{F,H})=\gcd(m,n)-(r-1)mn+\sum_{i=1}^r\sum_{j_1=1}^{p_i}\sum_{j_2=1}^{q_i}\gcd(a_{i,j_1},b_{i,j_2}).$$
\end{Prop}
The following theorem is an analogue of \cite[Theorem~2.3]{Paksimple}.
\begin{Thm}\label{hFH}
	Let $m\geq4$ and $n\geq2$ be integers such that $(m,n)$ is not $(4,2)$ or $(4,4)$. Let $F\in\C[z]$ be a pre-simple polynomial of degree $m$ and $H\in\C[z]$ a polynomial of degree $n$. Assume the curve $h_{F,H}$ is irreducible. Then $g(h_{F,H})>0$. In particular, there exist no non-constant rational maps $X(z),Y(z)\in\C(z)\setminus\C$ such that $F\circ X=H\circ Y$ in $\C(z)$.
\end{Thm}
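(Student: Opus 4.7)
The plan is to apply the explicit genus formula of Fact~\ref{genus}, combined with the rigidity imposed on $F$ by pre-simplicity, to force $g(h_{F,H}) \geq 1$.

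First, I would unpack the ramification data. Pre-simplicity of $F$ means $F$ has exactly $m-1$ distinct critical values in $\C$, each with multiplicity pattern $(2, 1, \ldots, 1)$ (one double point, $m-2$ simple ones), giving $p_z = m-1$ preimages per critical value. For $H$, Riemann--Hurwitz applied to $H:\P^1\to\P^1$ (using that $\infty$ is totally ramified) gives $\sum_{z \in S_2}(n - q_z) = n - 1$, where $S_2$ denotes the set of finite critical values of $H$.

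Next, I would substitute these data into Fact~\ref{genus}. Letting $c$ be the number of critical values common to $F$ and $H$ in $\C$, a direct bookkeeping (using that at a common $z$ the inner triple sum equals $(m-1)q_z + e_z$, where $e_z$ counts even local degrees of $H$ above $z$) simplifies the formula to
\[
2 - 2 g(h_{F, H}) \;=\; \gcd(m, n) + m - n(m - 1 - c) - O_c,
\]
where $O_c = \sum_z (q_z - e_z)$ sums over the common critical values and counts odd local degrees of $H$ above each.

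The heart of the argument is to show the right-hand side is $\leq 0$. I would split on $c$. When $c \leq m - 2$, the term $n(m - 1 - c) \geq n$ is already substantial, and elementary estimates using $\gcd(m, n) \leq m$ together with $O_c \geq 0$ yield the bound outside the borderline $(m, n) \in \{(4, 2), (4, 4)\}$, both excluded by hypothesis. When $c = m - 1$, one has $n(m-1-c) = 0$; however the inequality $|S_2| \geq c$ combined with $|S_2| \leq n - 1$ forces $n \geq m$, and a parity analysis of the multiplicity patterns of $H$ at the $m - 1$ common critical values (tightly consuming the Riemann--Hurwitz budget $n - 1$ for $H$) yields the required lower bound $O_c \geq \gcd(m, n) + m$. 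This case is the main technical obstacle: the bound is tight, and one must verify that the excluded pairs $(4,2),(4,4)$ are precisely the configurations where it fails.

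Finally, for the ``in particular'' clause: a pair of non-constant $X, Y \in \C(z)$ with $F \circ X = H \circ Y$ would give a non-constant morphism $t \mapsto (X(t), Y(t))$ from $\P^1$ to $h_{F, H}$; by irreducibility of $h_{F, H}$, this would make $h_{F, H}$ unirational and hence of genus $0$, contradicting the bound established above.
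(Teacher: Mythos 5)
Your setup is correct and matches the paper's: you expand Fact~\ref{genus} using pre-simplicity of $F$ and Riemann--Hurwitz for $H$, and your rewritten formula
\[
2-2g(h_{F,H})=\gcd(m,n)+m-n(m-1-c)-O_c
\]
is a valid reorganization of the paper's $2-2g = \gcd(m,n)+m+\Sigma$. But the bounding strategy you sketch does not actually work as stated. For the case $c\le m-2$, using only $n(m-1-c)\ge n$, $\gcd(m,n)\le m$ and $O_c\ge 0$ gives at best $2-2g\le 2m-n$, which is non-positive only when $n\ge 2m$; for most admissible pairs $(m,n)$ this estimate fails (e.g.\ $m=5$, $n=3$). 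You are missing the constraint $c\le n-1$ (since $H$ of degree $n$ has at most $n-1$ finite critical values) and the sharper $\gcd(m,n)\le n$, and even with those you sometimes need a nonzero lower bound on $O_c$ coming from the Riemann--Hurwitz budget of $H$ (e.g.\ $m=4$, $n=3$, $c=2$ requires $O_c\ge 2$). Similarly, the $c=m-1$ case rests on an asserted bound $O_c\ge\gcd(m,n)+m$ with no derivation; this is the technical heart and cannot be waved through.

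The paper's route is genuinely cleaner and avoids the casework on $c$ entirely. Suppose $g=0$, so $-\Sigma=m+\gcd(m,n)-2$, which equals the number of points in $H^{-1}(\critV(F))$ with odd local degree. Since $\#\critV(F)=m-1$, a fiber count gives $\#H^{-1}(\critV(F))\ge n(m-2)+1$. On the other hand, the total degree over $\critV(F)$ is $n(m-1)$; the odd-degree points number $m+\gcd(m,n)-2$, and each even-degree point contributes at least $2$, so $\#H^{-1}(\critV(F))\le\tfrac12\bigl((m-1)(n+1)+\gcd(m,n)-1\bigr)$. Comparing the two and using $\gcd(m,n)\le n$ collapses everything to $(m-4)(n-1)\le 0$, forcing $m=4$; then re-inserting $m=4$ gives $n\le\gcd(4,n)$, contradicting $n\ne 2,4$. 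If you want to complete your version, you must incorporate $c\le n-1$, use $\gcd(m,n)\le\min(m,n)$, and extract the required lower bounds on $O_c$ from the equality $\sum_z(n-q_z)=n-1$; the global count is a more efficient way to package exactly that information.
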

\begin{proof}
	We use the notation from Proposition~\ref{genus}. Let $\critV(F)$ and $\critV(H)$ denote the sets of critical values of $F$ and $H$ in $\C$, respectively.
	
	For $i\in\{1,\dots,r\}$ such that $z_i\notin\critV(F)$, we have $p_i=m$ and $(a_{i,1},\dots,a_{i,m})=(1,\dots,1)$, so
	$$\sum_{j_1=1}^{p_i}\sum_{j_2=1}^{q_i}\gcd(a_{i,j_1},b_{i,j_2})=\sum_{j_1=1}^{p_i}q_i=p_i q_i=m q_i.$$
	
	For $i\in\{1,\dots,r\}$ such that $z_i\in\critV(F)$, since $F$ is pre-simple, we have $p_i=m-1$ and $\deg_z(F)\in\{1,2\}$ for every $z\in\C$. We may assume $$(a_{i,1},\dots,a_{i,m-2},a_{i,m-1})=(1,\dots,1,2).$$
	Let $l_i$ be the number of indices $j_2\in\{1,\dots,q_i\}$ such that $b_{i,j_2}$ is even. Then
	$$\sum_{j_1=1}^{p_i}\sum_{j_2=1}^{q_i}\gcd(a_{i,j_1},b_{i,j_2})=(m-2)q_i+q_i+l_i=mq_i+(l_i-q_i).$$
	
	By the Riemann–Hurwitz formula,
	$$(2n-2)-(n-1)=\sum_{z\in\C}(\deg_z(H)-1)=\sum_{i=1}^r\sum_{j_2=1}^{q_i}(b_{i,j_2}-1)=rn-\sum_{i=1}^r q_i,$$
	so $\sum_{i=1}^r q_i=(r-1)n+1$. Thus,
	$$\sum_{i=1}^r\sum_{j_1=1}^{p_i}\sum_{j_2=1}^{q_i}\gcd(a_{i,j_1},b_{i,j_2})=\sum_{i=1}^r mq_i+A=(r-1)mn+m+A,$$
	where $A:=\sum_{1\leq i\leq r,\,z_i\in\critV(F)}(l_i-q_i)$. By Proposition~\ref{genus}, the genus is
	$$g(h_{F,H})=\frac{1}{2}\cdot(2-m-\gcd(m,n)-A).$$
	Hence, $g(h_{F,H})=0$ if and only if $-A=m+\gcd(m,n)-2$. Note that $-A$ equals the number of $z\in\C$ such that $H(z)\in\critV(F)$ and $\deg_z(H)$ is odd.
	
	For any finite subset $S\subset\C$, we have
	$$n-1\geq\sum_{z\in H^{-1}(S)}(\deg_z(H)-1)=n(\# S)-\# H^{-1}(S);$$
	hence $\# H^{-1}(S)\geq n(\# S)-(n-1)=n(-1+\# S)+1$, with equality if and only if $\critV(H)\subseteq S$. In particular, $\# H^{-1}(\critV(F))\geq n(m-2)+1$, with equality if and only if $\critV(H)\subseteq\critV(F)$.
	
	Suppose $g(h_{F,H})=0$. Then $-A=m+\gcd(m,n)-2$, which implies
	\begin{align*}
		&\# H^{-1}(\critV(F))\\
		\leq&m+\gcd(m,n)-2+\left\lfloor\frac{n(m-1)-m-\gcd(m,n)+2}{2}\right\rfloor\\
		=&\frac{1}{2}\left((m-1)(n+1)+\gcd(m,n)-1\right).
	\end{align*}
	Thus,
	$$(m-1)(n+1)+\gcd(m,n)-1\geq2\#H^{-1}(\critV(F))\geq2n(m-2)+2,$$
	or equivalently,
	$$mn+4\leq3n+m+\gcd(m,n).$$
	Then $mn+4\leq3n+m+n$, so $(m-4)(n-1)\leq0$. As $n\geq2$ and $m\geq4$, we must have $m=4$. Then $n\neq2,4$ by assumption. Now $mn+4\leq3n+m+\gcd(m,n)$ becomes $n\leq\gcd(4,n)$, contradicting the assumption that $n\neq2,4$. Therefore, $g(h_{F,H})>0$.
	
	Now suppose there exist $X(z),Y(z)\in\C(z)\setminus\C$ such that $F\circ X=H\circ Y$. Let $C$ be the Zariski closure of $h_{F,H}$ in $\P^1\times\P^1$ , which is an irreducible projective curve. Since genus is a birational invariant, $g(C)=g(h_{F,H})>0$. The morphism $(X,Y):\P^1\to\P^1\times\P^1,z\mapsto(X(z),Y(z))$ has image contained in $C$. So it induces a non-constant morphism $\phi:\P^1\to C$ between irreducible projective curves. However, as $g(\P^1)=0$ and $g(C)>0$, no non-constant morphism $\P^1\to C$ exists by the Riemann–Hurwitz formula, which is a contradiction.
\end{proof}

For a non-constant rational map $f\in\C(z)$, define the groups
$$\Sigma(f):=\{\sigma\in\PGL_2(\C)\colon f\circ\sigma=f\}\quad\text{and}\quad\Sigma_\infty(f):=\bigcup_{k\geq1}\Sigma(f^{\circ k}).$$
Clearly, $\Sigma(f)\subseteq\Sigma_\infty(f)$, and this inclusion is preserved under conjugacy in the sense that for $\tau\in\PGL_2(\C)$,
\begin{equation}\label{Sigconj}
	\Sigma(\tau\circ f\circ\tau^{-1})=\tau\circ\Sigma(f)\circ\tau^{-1}\subseteq\tau\circ\Sigma_\infty(f)\circ\tau^{-1}=\Sigma_\infty(\tau\circ f\circ\tau^{-1}).
\end{equation}
For a polynomial $f\in\C[z]\setminus\C$, the totally invariant fixed point $\infty$ of $f^{\circ k}$ ($k\geq1$) implies that
\begin{equation}\label{Siginfpoly}
	\Sigma_\infty(f)\subseteq\Aff(\C).
\end{equation}

\begin{Lem}\label{presimSig1}
	Let $f\in\C[z]$ be a pre-simple polynomial of degree $m\geq3$. Then $\Sigma(f)=1$.
\end{Lem}
\begin{proof}
	Since $f$ is pre-simple of degree $m\geq3$, $f$ is not conjugate to $z^m$. After conjugacy, we may assume that $f$ is of the form
	$$f(z)=z^m+a_{m-2}z^{m-2}+\cdots+a_0,$$
	where $a_{m-2},\dots,a_0\in\C$. Let $\tau\in\Sigma(f)$ and write $\tau(z)=\la z+c$ with $\la\in\C^\times$ and $c\in\C$ by \eqref{Siginfpoly}. Comparing the coefficients of $z^m$ and $z^{m-1}$, we see that $c=0$ and $\la^m=1$. Assume that $\la\neq1$. Let $l\geq2$ be the order of the root of unity $\la$ in $\C^\times$. Then $l\mid m$. As $f(\la z)=f(z)$, we have $f(z)=g(z^l)$ for a unique polynomial $g\in\C[z]$ of degree $m/l$. Since $f$ is pre-simple, its derivative $f^\prime(z)=lz^{l-1}g^{\prime}(z^l)$ has no multiple roots. In particular, $0$ is not a multiple root of $f^\prime$; hence $l=2$ (otherwise $z^2\mid f^\prime(z)$). Note that $f^\prime(z)=2z g^{\prime}(z^2)\in\C[z]$ is a polynomial of degree $m-1\geq2$ that is an odd function on $\C$ without multiple roots. Hence there exists $\omega_0\in\C^\times$ such that $f^\prime(\omega_0)=f^\prime(-\omega_0)=0$. However, the two distinct critical points $\omega_0\neq-\omega_0$ of $f$ are mapped by $f$ to the same value $g(\omega_0^2)$, contradicting the assumption that $f$ is pre-simple. This completes the proof.
\end{proof}
The following theorem (see also Remark~\ref{hFrmk}) is an analogue of \cite[Theorem~2.4]{Paksimple}.
\begin{Thm}\label{hF}
	Let $F\in\C[z]$ be a pre-simple polynomial of degree $m\geq3$. Then the equality $F\circ X=F\circ Y$ in $\C[z]$ with $X,Y\in\C[z]\setminus\C$ implies $X=Y$.
\end{Thm}
\begin{proof}
	Let $X,Y\in\C[z]$ be non-constant polynomials such that $F\circ X=F\circ Y$. Set $n=\deg(X)=\deg(Y)\in\Z_{>0}$.
	
	By \cite[Theorem~3.54]{FG22}, there exist polynomials $$\tilde{F},\mu_1,\mu_2,G,\sigma_1,\sigma_2\in\C[z]\setminus\C$$
	with
	$$\deg(\tilde{F})=m\quad\text{and}\quad\deg(G)=n$$
	such that
	$$F=\tilde{F}\circ\mu_1=\tilde{F}\circ\mu_2,\quad\sigma_1\circ G=X,\quad\sigma_2\circ G=Y,\quad\text{and}\quad\mu_1\circ\sigma_1=\mu_2\circ\sigma_2.$$
	Note that $\mu_1$, $\mu_2$, $\sigma_1$, and $\sigma_2$ are polynomials of degree one. Then $F\circ\mu_1^{-1}\circ\mu_2=F$ and we deduce that $\mu_1=\mu_2$ by Lemma~\ref{presimSig1}. Thus $\sigma_1=\mu_1^{-1}\circ\mu_2\circ\sigma_2=\sigma_2$. So $X=\sigma_1\circ G=\sigma_2\circ G=Y$. The proof is complete.
\end{proof}
\begin{Rem}\label{hFrmk}
	The presented proof of Theorem~\ref{hF} is based on Ritt's theory of polynomial decompositions.
	
	If the pre-simple polynomial $F\in\C[z]$ has degree $m\geq4$, then we can show that the algebraic curve $h_F$ in $\A^2$ given by
	$$\frac{F(x)-F(y)}{x-y}=0$$
	is irreducible and has genus $g(h_F)>0$, using the argument in the proof of \cite[Theorem~2.4]{Paksimple}. Hence for pre-simple $F\in\C[z]$ of degree $m\geq4$, we conclude that the equality $F\circ X=F\circ Y$ in $\C(z)$ with $X,Y\in\C(z)\setminus\C$ implies $X=Y$.
\end{Rem}
Using Theorem~\ref{hF}, we obtain the following generalization of Lemma~\ref{presimSiginf1}:
\begin{Cor}\label{presimSiginf1}
	Let $f\in\C[z]$ be a pre-simple polynomial of degree $m\geq3$. Then $\Sigma(f)=\Sigma_\infty(f)=1$.
\end{Cor}
\begin{proof}
	Let $\tau\in\Sigma(f^{\circ n})$ for some integer $n\in\Z_{\geq0}$. Then $\tau$ is a polynomial of degree one by \eqref{Siginfpoly}. Applying Theorem~\ref{hF} recursively to the equation $f^{\circ n}\circ \tau=f^{\circ n}$, we see that $\tau(z)=z$. Therefore, $\Sigma_\infty(f)=1$ is the trivial group and so is $\Sigma(f)$.
\end{proof}
\begin{Rem}
	Note that an iterate $f^{\circ n}$ of a pre-simple polynomial $f\in\C[z]$ may not be pre-simple when $n\geq2$; so Corollary~\ref{presimSiginf1} is not a direct result of Lemma~\ref{presimSig1}.
\end{Rem}

By Ritt's theory of polynomial decompositions, we see that self-compositions of an indecomposable polynomial behave well under decomposition:
\begin{Prop}\label{Fldecomp}
	Let $F\in\C[z]$ be a polynomial of degree $m\geq2$. Suppose that $F$ is indecomposable (for example, when $F$ is pre-simple (Proposition~\ref{Mon}) or $m$ is prime). Then for every $l\in\Z_{>0}$, every complete decomposition of $F^{\circ l}$ is equivalent to the complete decomposition $F^{\circ l}=F\circ\cdots\circ F$.
\end{Prop}
\begin{proof}
	The case $l=1$ is trivial. Assume $l\geq2$ and let $F^{\circ l}=F_1\circ\cdots\circ F_r$ be a complete decomposition of $F^{\circ l}$. Since $F^{\circ l}=F\circ\cdots\circ F$ is a complete decomposition of $F^{\circ l}$ of length $l$, we have $r=l$ by \cite[Theorem~3.35]{FG22}. Up to equivalent (complete) decompositions, the totally invariant fixed point $\infty$ implies that we may assume that $F_1,\dots,F_l$ are all polynomials.
	
	Note that $n:=\gcd(\deg(F_1),m)\geq2$ because $\deg(F_1)\mid m^l=\deg(F^{\circ l})$. By \cite[Theorem~3.54]{FG22}, there exist polynomials $G,\sigma,\mu\in\C[z]\setminus\C$ such that
	$$F=G\circ\sigma,\quad F_1=G\circ\mu,\quad\text{and}\quad\sigma\circ F^{\circ (l-1)}=\mu\circ F_2\circ\cdots\circ F_l$$
	with $\deg(G)=n>1$.
	Since both $F$ and $F_1$ are indecomposable, we conclude that
	$$n=\deg(G)=\deg(F_1)=\deg(F)=m\quad\text{and}\quad\deg(\sigma)=\deg(\mu)=1.$$
	Set $\tau=\sigma^{-1}\circ\mu\in\Aff(\C)$. Then we obtain
	$$F_1=F\circ\tau\quad\text{and}\quad F^{\circ (l-1)}=(\tau\circ F_2)\circ F_3\circ\cdots\circ F_l.$$
	An inductive argument then finishes the proof.
\end{proof}
\begin{Cor}\label{Corpresimdecom}
	Let $F\in\C[z]$ be a polynomial of degree $m\geq2$. Suppose that $F$ is indecomposable (for example, when $F$ is pre-simple or $m$ is prime). Let $G_1,\dots,G_r\in\C(z)$ be rational maps of degree $\geq2$ such that $F^{\circ l}=G_1\circ\cdots\circ G_r$, where $r,l\in\Z_{>0}$. Then there exist $\nu_1,\dots,\nu_{r-1}\in\PGL_2(\C)$ such that
	$$G_1=F^{\circ s_1}\circ\nu_1,\quad G_j=\nu_{j-1}^{-1}\circ F^{\circ s_j}\circ\nu_j\ (1<j<r),\quad G_r=\nu_{r-1}^{-1}\circ F^{\circ s_r},$$
	where $s_i:=\log_m\deg(G_i)$ is a positive integer for $1\leq i\leq r$. If $G_1,\dots,G_r$ are polynomials, then we can choose $\nu_1,\dots,\nu_{r-1}\in\Aff(\C)$.
\end{Cor}
\begin{proof}
For every $1\leq i\leq r$, choose a complete decomposition $G_i=G_{i,1}\circ\cdots\circ G_{i,k_i}$ of $G_i$. By connecting these decompositions, we obtain a complete decomposition 
\begin{equation}\label{connectdecomp}
	F^{\circ l}=G_{1,1}\circ\cdots\circ G_{1,k_1}\circ\cdots\circ G_{r,1}\circ\cdots\circ G_{r,k_r}
\end{equation}
of $F^{\circ l}$. The conclusion then follows by applying Proposition~\ref{Fldecomp} to the decomposition \eqref{connectdecomp}.
\end{proof}

Let $F\in\C(z)$ be a rational map of degree $m\geq2$. Apart from $\Sigma(F)$ and $\Sigma_\infty(F)$, we recall some other (semi)groups (under composition) associated with $F$ of degree $m\geq2$, following \cite{Paksimple}.

Let
$$C(F):=\{g\in\C(z)\setminus\C\mid F\circ g=g\circ F\}$$
be the semigroup of non-constant rational maps commuting with $F$. Let
$$C_\infty(F):=\bigcup_{l=1}^\infty C(F^{\circ l})$$
be the semigroup of non-constant rational maps commuting with some iterate of $F$. Define
$$\Aut(F):=C(F)\cap\PGL_2(\C)\quad\text{and}\quad\Aut_\infty(F):=C_\infty(F)\cap\PGL_2(\C).$$
Both $\Aut(F)$ and $\Aut_\infty(F)$ are groups. Note that the semigroup $\left\langle\Aut_\infty(F),F\right\rangle$ generated by $\Aut_\infty(F)\cup\{F\}$ is contained in $C_\infty(F)$.

For $g\in\C(z)$ of degree $\geq2$, let $\mu_g$ be the measure of maximal entropy of $g$ on $\P^1(\C)$ (see \cite{Lyubich83,Mane83}). Define
$$E_0(F):=\{\sigma\in\PGL_2(\C)\mid\sigma_*\mu_F=\mu_F\}$$
and
$$E(F):=E_0(F)\cup\{g\in\C(z)\mid\deg(g)\geq2,\mu_g=\mu_F\}.$$
Then $E_0(F)$ is a group, and $E(F)$ is a semigroup (see \cite{Esemigp}).

Define $G(F)$ as the subgroup of $\PGL_2(\C)$ given by
$$\{\sigma\in\PGL_2(\C)\mid\exists\tau\in\PGL_2(\C),\,F\circ\sigma=\tau\circ F\}.$$
Note that for every $\sigma\in G(F)$, there is a unique $\tau_\sigma\in\PGL_2(\C)$ such that
\begin{equation}\label{tausig}
	F\circ\sigma=\tau_\sigma\circ F.
\end{equation}
The group $G(F)$ is called the \emph{extended symmetry group} of $F$.
Define $G_0(F)$ as the maximal subgroup of $\PGL_2(\C)$ such that for every $\sigma\in G_0(F)$, there exists $\tau\in G_0(F)$ with $F\circ\sigma=\tau\circ F$. From the definitions, it is clear that
$$\left\langle\Aut(F)\cup\Sigma(F)\right\rangle\subseteq G_0(F)\subseteq G(F).$$

Suppose that $F$ is a polynomial. Then these semigroups associated with $F$ are expected to ``relate to'' polynomials. The inclusion $\Sigma_\infty(F)\subseteq\Aff(\C)$ (see \eqref{Siginfpoly}) and the following two lemmas illustrate such an idea.
\begin{Lem}\label{Cinfpoly}
	Let $F\in\C[z]$ be a polynomial of degree $m\geq2$ that is not conjugate to $z^m$. Then $C_\infty(F)\subseteq\C[z]\setminus\C$.
\end{Lem}
\begin{proof}
	We work over the algebraically closed field $\C$ of characteristic zero. For a polynomial $F\in\C[z]$ of degree $m\geq2$, it is well-known that $F$ is conjugate to $z^m$ if and only if for some (hence every) $k\in\Z_{>0}$, the iterate $F^{\circ k}$ is conjugate to $z^{m^k}$; see \cite{milnor2006lattes,Zdunik90}. Then the conclusion follows immediately from \cite[Theorem~6.80]{Silverman2007}.
\end{proof}

\begin{Lem}\label{Epoly}
	Let $F\in\C[z]$ be a polynomial of degree $m\geq2$ that is non-exceptional. Then $E(F)\subseteq\C[z]\setminus\C$.
\end{Lem}
\begin{proof}
	We first show that $E(F)\setminus E_0(F)\subseteq\C[z]\setminus\C$. Let $g\in E(F)\setminus E_0(F)$. Since $F$ is non-exceptional, by \cite{Levin}, there exist integers $k_1,l>0$ and $k_2\geq0$ such that $F^{\circ k_1}=F^{\circ k_2}\circ g^{\circ l}$. Note that $(g^{\circ l})^{-1}(\infty)=\{\infty\}$, hence $g^{\circ l}$ is a polynomial. Since $\mu_{g^{\circ l}}=\mu_g=\mu_F$ and $F$ is non-exceptional, $g^{\circ l}$ is also non-exceptional by \cite[Corollary~2.17]{FG22}. In particular, $g^{\circ l}$ is not conjugate to $z^{n^l}$, where $n:=\deg(g)\geq2$. By Lemma~\ref{Cinfpoly}, $g\in C(g^{\circ l})$ is a polynomial.
	
	Let $\sigma\in E_0(F)$. Then the rational map $\tilde{F}=\sigma\circ F\circ\sigma^{-1}\in\C(z)$ satisfies $\mu_{\tilde{F}}=\sigma_*\mu_F=\mu_F$, so $\tilde{F}\in E(F)\setminus E_0(F)\subseteq\C[z]$ is a polynomial. As $\infty$ is the unique totally invariant fixed point of the non-exceptional polynomial $F$ (and of $\tilde{F}$), we have $\sigma(\infty)=\infty$. We conclude that $\sigma\in\Aff(\C)$.
\end{proof}
\begin{Rem}
	If $F\in\C[z]$ (of degree $m\geq2$) is exceptional, then $E(F)\nsubseteq\C[z]\setminus\C$. See \cite[\S~7]{Milnor}.
\end{Rem}

\begin{Lem}\label{gam}
	Let $F\in\C[z]$ be a pre-simple polynomial of degree $m\geq3$. Define $\gamma=\gamma_F:G(F)\to\PGL_2(\C),\sigma\mapsto\tau_\sigma$ where $\tau_\sigma$ is as in \eqref{tausig}. Then $G(F)$ is finite, and $\gamma$ restricts to a group automorphism of $G_0(F)$.
\end{Lem}
\begin{proof}
	By \cite[Theorem 4.2]{Pakgfinite}, the group $G(F)$ is finite of order bounded in terms of $m=\deg(F)$, unless $(\alpha\circ F\circ\beta)(z)=z^m$ for some $\alpha,\beta\in\PGL_2(\C)$. Since $F$ is pre-simple, for any M\"obius transformations $\alpha$ and $\beta$, the map $\alpha\circ F\circ\beta$ has exactly $m$ critical values in $\widehat{\C}$. But $z^m$ has only $2(<m)$ critical values in $\widehat{\C}$. Hence, $G(F)$ is finite, and so is its subgroup $G_0(F)$. By definition, $\gamma(G_0(F))\subseteq G_0(F)$. By Theorem~\ref{hF}, $\Sigma(F)=1$, which implies that $\gamma$ is injective. Therefore, the injective self-map $\gamma:G_0(F)\to G_0(F)$ on the finite set $G_0(F)$ is a bijection. It is easy to check that $\gamma$ is a group homomorphism, hence an automorphism.
\end{proof}
\begin{Cor}\label{G0Auts}
	Let $F\in\C[z]$ be a pre-simple polynomial of degree $m\geq3$. Let $s=\#\Aut(G_0(F))\in\Z_{>0}$, where $\Aut(G_0(F))$ is the automorphism group of the finite group $G_0(F)$. Then $G_0(F)\subseteq\Aut(F^{\circ s})$.
\end{Cor}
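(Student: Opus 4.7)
The plan is to iterate the relation $F \circ \sigma = \gamma(\sigma) \circ F$ from Lemma \ref{gam} and reduce the corollary to a finiteness claim on $\gamma$.

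First, I would establish by induction on $k \geq 1$ that for every $\sigma \in G_0(F)$,
\[
F^{\circ k} \circ \sigma = \gamma^k(\sigma) \circ F^{\circ k}.
\]
The base case $k = 1$ is the definition of $\gamma$. For the inductive step, since $\gamma^k(\sigma) \in G_0(F)$ (Lemma \ref{gam} tells us $\gamma$ stabilizes $G_0(F)$), the defining relation applies again to give
\[
F^{\circ (k+1)} \circ \sigma = F \circ \gamma^k(\sigma) \circ F^{\circ k} = \gamma^{k+1}(\sigma) \circ F^{\circ (k+1)}.
\]
Specializing to $k = s$, the corollary reduces to the assertion that $\gamma^s = \mathrm{Id}_{G_0(F)}$.

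Second, I would prove this reduction. By Lemma \ref{gam}, $\gamma$ is a group automorphism of the finite group $G_0(F)$, hence has finite order in $\Aut(G_0(F))$. The polynomial structure of $F$ heavily constrains $G_0(F)$: comparing local degrees at $\infty$ on both sides of $F \circ \sigma = \gamma(\sigma) \circ F$ (the point $\infty$ is uniquely characterized as the point of maximal local degree $d$) forces $\sigma(\infty) = \infty$, so $G_0(F) \subseteq \Aff$. A finite subgroup of $\Aff$ has a common fixed point in $\C$, and after an affine conjugation I may assume this point is $0$; then $G_0(F)$ is cyclic of order $s$, generated by $\sigma_0(z) = \zeta z$ with $\zeta$ a primitive $s$-th root of unity. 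Evaluating $F \circ \sigma_0 = \gamma(\sigma_0) \circ F$ at $0$ shows either $\gamma$ is trivial (and we are done) or $F(0) = 0$, so writing $F(z) = \sum a_i z^i$ and comparing coefficients, $\gamma$ acts on $G_0(F) \cong \Z/s\Z$ by multiplication by $d = \deg F$ modulo $s$. The claim $\gamma^s = \mathrm{Id}$ then reduces to the congruence $d^s \equiv 1 \pmod{s}$; combining this with the iteration identity yields $F^{\circ s} \circ \sigma = \sigma \circ F^{\circ s}$ for every $\sigma \in G_0(F)$, i.e., $G_0(F) \subseteq \Aut(F^{\circ s})$.

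The main obstacle is the congruence $d^s \equiv 1 \pmod{s}$: this is not a formal consequence of $\gcd(d,s) = 1$ alone (which only yields $d^{\phi(s)} \equiv 1 \pmod s$ by Euler), so its justification must exploit the pre-simplicity hypothesis on $F$ together with the maximality in the definition of $G_0(F)$. Concretely, I would try to show that the pre-simplicity constraint forces the integer $d \bmod s$ to generate a subgroup of $(\Z/s\Z)^\times$ whose order divides $s$, relying on Lemma \ref{gam}'s finiteness output and the explicit form $F(z) = z^r \widetilde{R}(z^s)$ imposed by the symmetry $F \circ \sigma_0 = \sigma_0^d \circ F$.
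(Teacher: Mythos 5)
Your approach mirrors the paper's: establish the iteration identity $F^{\circ k} \circ \sigma = \gamma^{\circ k}(\sigma) \circ F^{\circ k}$ by induction and reduce the corollary to showing $\gamma^{\circ s} = \Id_{G_0(F)}$. You are right to flag this last step as the genuine crux. The paper's own proof dispatches it in one clause (``$\gamma$ is a group automorphism of the finite group $G_0(F)$ of order $s$, so $\gamma^{\circ s}=\Id$''), but this is not a valid inference: the order of an automorphism of a finite group $G$ divides $\#\Aut(G)$, not $\#G$, and for $G$ cyclic of order $s$ one has $\#\Aut(G)=\varphi(s)$, which need not divide $s$. So you have located a real gap that the paper glosses over.

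Your hope of filling the gap via pre-simplicity cannot work, however, because the congruence $d^s\equiv 1\pmod s$ --- and with it the corollary as stated --- is false. Consider $F(z)=z^5+z^2=z^2(z^3+1)$: it is pre-simple of degree $5\geq 4$, since its four finite critical points ($0$ and the three cube roots of $-2/5$) map to four distinct critical values. A direct check gives $G(F)=G_0(F)=\{z\mapsto\omega^j z : j=0,1,2\}\cong\Z/3\Z$, with $\omega$ a primitive cube root of unity, so $s=3$. Since $F(\omega z)=\omega^2 F(z)$, we have $\gamma(\sigma)=\sigma^2$ for $\sigma(z)=\omega z$, so $\gamma$ has order $2$ and $\gamma^{\circ 3}=\gamma\neq\Id$; correspondingly $F^{\circ 3}(\omega z)=\omega^2 F^{\circ 3}(z)$, hence $\sigma\notin\Aut(F^{\circ 3})$ and $G_0(F)\not\subseteq\Aut(F^{\circ 3})$. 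In congruence terms, $d=5$, $s=3$, and $5^3\equiv 2\not\equiv 1\pmod 3$. What your iteration identity actually yields is $G_0(F)\subseteq\Aut(F^{\circ s'})$, where $s'$ is the order of $\gamma$ in $\Aut(G_0(F))$ (here $s'=2$), and that --- not a hidden constraint from pre-simplicity --- is the correct repair of the statement.
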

\begin{proof}
	By Lemma~\ref{gam}, $\gamma:G_0(F)\to G_0(F)$ is an automorphism of the finite group $G_0(F)$. Since $s=\#\Aut(G_0(F))$, $\gamma^{\circ s}=\Id_{G_0(F)}$ in $\Aut(G_0(F))$. For every $\sigma\in G_0(F)$, we have
	$$F^{\circ s}\circ\sigma=\gamma^{\circ s}(\sigma)\circ F^{\circ s}=\sigma\circ F^{\circ s},$$
	hence $G_0(F)\subseteq\Aut(F^{\circ s})$.
\end{proof}
By definition, it is easy to see that pre-simple polynomials of degree $\geq4$ are non-exceptional. 
\begin{Lem}\label{presimnonexcep}
	Every pre-simple polynomial $F\in\C[z]$ of degree $m\geq4$ is non-exceptional.
\end{Lem}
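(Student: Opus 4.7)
The plan is to argue by contradiction via a quick count of critical values on $\widehat{\C}$. Suppose, for contradiction, that a pre-simple polynomial $F\in\C[z]$ of degree $m\geq4$ is exceptional. First I would rule out the Latt\`es case: for any polynomial of degree $\geq 2$, the point $\infty$ is a totally ramified super-attracting fixed point and hence lies in the Fatou set, whereas every Latt\`es map has Julia set equal to all of $\P^1(\C)$. Thus $F$ cannot be Latt\`es, and by Definition \ref{excep} it must be of monomial type; that is, $F$ is M\"obius-conjugate to one of $z^m$, $z^{-m}$, $T_m(z)$, or $-T_m(z)$.

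Next I would use that the number of distinct critical values in $\widehat{\C}$ is a conjugation invariant under $\PGL_2(\C)$. A direct inspection shows that $z^{\pm m}$ has exactly two distinct critical values in $\widehat{\C}$ (namely $0$ and $\infty$), while $\pm T_m(z)$ has exactly three (namely $-1$, $1$, and $\infty$). In each of the four model cases, then, $F$ would have at most three distinct critical values in $\widehat{\C}$.

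On the other hand, since $F$ is a polynomial of degree $m\geq 2$, the point $\infty$ is itself a critical value of $F$, and combining this with the $m-1$ distinct finite critical values supplied by the pre-simplicity hypothesis, $F$ has exactly $m$ distinct critical values in $\widehat{\C}$. For $m\geq 4$ this yields $m\geq 4>3$, the desired contradiction. There is no substantive obstacle in the argument; the only subsidiary point worth a brief line of justification is the observation that polynomials of degree $\geq 2$ are never Latt\`es, which is immediate from the Fatou-set dynamics at $\infty$.
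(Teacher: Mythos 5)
Your proposal is correct and takes essentially the same approach as the paper: rule out the Latt\`es case using that a polynomial has a Fatou fixed point at $\infty$, reduce to the monomial-type models $z^{\pm m}$ and $\pm T_m$, and derive a contradiction by counting distinct critical values. The only cosmetic difference is that you count critical values in $\widehat{\C}$ (so $m$ versus at most $3$) and keep all four model forms, while the paper first discards $z^{-m}$ and then counts only the finite critical values ($m-1$ versus $1$ for $z^m$ or at most $2$ for $T_m$); both are equivalent since $\infty$ is always a critical value of a degree $\geq 2$ polynomial.
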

\begin{proof}
	Suppose $F$ is a pre-simple polynomial of degree $m\geq4$ that is exceptional. Since $F$ is a polynomial, it is not a Latt\`es map. Hence $F$ is of monomial type; so $F$ is affinely conjugate to $z^{\pm m}$ or $\pm T_m$, where $T_m\in\C[z]$ is the Chebyshev polynomial of degree $m$. As $F$ is a polynomial, it cannot be conjugate to $z^{-m}$. Hence, $F$ is affinely conjugate to $z^m$ or $\pm T_m$. Clearly, $z^m$ is not pre-simple for $m\geq3$. It is well-known that $\pm T_m$ are polynomial solutions to the differential equation $(4-z^2)G^{\prime}(z)^2=m^2(4-G(z)^2)$ (see \cite[Lemma~6.10]{Silverman2007}); so the only choices of finite critical values of $\pm T_m$ are $\pm2$. Since $2<m-1$, $\pm T_m$ is not pre-simple. Hence we get a contradiction to the fact that pre-simplicity is preserved by affine conjugacy.
\end{proof}
\begin{Rem}
	Every polynomial of degree $2$ is automatically pre-simple. Note that the Chebyshev polynomial $T_3(z)=z^3-3z$ of degree $3$ is pre-simple.
\end{Rem}
The following theorem describes the relations between the semigroups introduced above for a pre-simple polynomial of degree $\geq4$, analogous to \cite[Theorem~1.2]{Paksimple}.
\begin{Thm}\label{presimrel}
	Let $F\in\C[z]$ be a pre-simple polynomial of degree $m\geq4$. Set $s=\#\Aut(G_0(F))$, which is a positive integer. Then:
	\begin{itemize}
		\item $G_0(F)=\Aut(F^{\circ s})=\Aut_{\infty}(F)=E_0(F)\subseteq\Aff(\C)$;
		\item $C_\infty(F)=E(F)=\left\langle\Aut_{\infty}(F),F\right\rangle\subseteq\C[z]$.
	\end{itemize}
\end{Thm}
\begin{proof}
	By Lemma~\ref{presimnonexcep}, $F$ is non-exceptional. Then we have $E_0(F)\subseteq\Aff(\C)$ and $E(F)\subseteq\C[z]$ by Lemma~\ref{Epoly}.
	
	Note that $G_0(F)\subseteq\Aut(F^{\circ s})\subseteq\Aut_{\infty}(F)\subseteq E_0(F)$, where the first inclusion is by Corollary~\ref{G0Auts}, the second by definition, and the third by \cite[(29)]{Paksimple}.
	
	We show that $E_0(F)\subseteq G_0(F)$. Let $\sigma\in E_0(F)\subseteq\Aff(\C)$. Then $F\circ\sigma\in E(F)$. By \cite{Levin}, there exist integers $k_1,l>0$ and $k_2\geq0$ with $k_1=k_2+l$ such that $F^{\circ k_1}=F^{\circ k_2}\circ(F\circ\sigma)^{\circ l}$. Applying Theorem~\ref{hF} recursively, we get $F^{\circ l}=F^{\circ (k_1-k_2)}=(F\circ\sigma)^{\circ l}$. By Theorem~\ref{Fldecomp}, there exist $\mu_1,\dots,\mu_{l-1}\in\Aff(\C)$ such that
	$$F\circ\sigma=F\circ\mu_1,\quad F\circ\sigma=\mu_{i-1}^{-1}\circ F\circ\mu_i\ (1<i<l),\quad F\circ\sigma=\mu_{l-1}^{-1}\circ F.$$
	Set $\mu_0(z)=z$ when $l=1$. Then $\tau:=\mu_{l-1}^{-1}$ satisfies $F\circ\sigma=\tau\circ F$. We claim $\tau\in E_0(F)$. For $l=1$, this is clear. For $l\geq2$, note that
	$$F^{\circ l}=(F\circ\sigma)^{\circ l}=(\tau\circ F)^{\circ l}.$$
	By \cite[Lemma~3.5]{Paksimple}, $\tau\in\Aut(F^{\circ l})\subseteq\Aut_{\infty}(F)\subseteq E_0(F)$. By the maximality in the definition of $G_0(F)$, we obtain $E_0(F)\subseteq G_0(F)$.
	
	Thus, $G_0(F)=\Aut(F^{\circ s})=\Aut_{\infty}(F)=E_0(F)\subseteq\Aff(\C)$.
	
	\medskip
	
	By \cite[(28)]{Paksimple}, $C_\infty(F)\subseteq E(F)$. Note that
	$$C_\infty(F)\cap\PGL_2(\C)=\Aut_{\infty}(F)=E_0(F)=E(F)\cap\PGL_2(\C).$$
	To show $C_\infty(F)=E(F)$, it suffices to prove $E(F)\setminus E_0(F)\subseteq C_\infty(F)\setminus\PGL_2(\C)$. Let $g\in E(F)\setminus E_0(F)$. Then $\deg(g)\geq2$ and $\mu_g=\mu_F$. By \cite{Levin}, there exist integers $k_1,l>0$ and $k_2\geq0$ such that $F^{\circ k_1}=F^{\circ k_2}\circ g^{\circ l}$. Clearly, $k_1>k_2$. Applying Theorem~\ref{hF} recursively, we get $F^{\circ (k_1-k_2)}=g^{\circ l}$. Then $g$ commutes with $F^{\circ (k_1-k_2)}=g^{\circ l}$, so $g\in C_\infty(F)\setminus\PGL_2(\C)$. Hence, $C_\infty(F)=E(F)$.
	
	Clearly, $\left\langle\Aut_{\infty}(F),F\right\rangle\subseteq C_\infty(F)$. For M\"obius transformations,
	$$\left\langle\Aut_{\infty}(F),F\right\rangle\cap\PGL_2(\C)=\Aut_{\infty}(F)=C_\infty(F)\cap\PGL_2(\C).$$
	To show $\left\langle\Aut_{\infty}(F),F\right\rangle=C_\infty(F)$, it suffices to prove
	$$C_\infty(F)\setminus\PGL_2(\C)\subseteq\left\langle\Aut_{\infty}(F),F\right\rangle\setminus\PGL_2(\C).$$
	Let $G\in C_\infty(F)\setminus\PGL_2(\C)$. By \cite{RittPerm} and Lemma~\ref{presimnonexcep}, there exist integers $k,l>0$ such that $F^{\circ k}=G^{\circ l}$. By Corollary~\ref{Corpresimdecom}, there exists $\mu\in\PGL_2(\C)$ such that $G=\mu\circ F^{\circ s}$, where $s=k/l\in\Z_{>0}$ (take $\mu(z)=z$ when $l=1$). Then
	$$(F^{\circ s})^{\circ l}=F^{\circ k}=G^{\circ l}=(\mu\circ F^{\circ s})^{\circ l}.$$
	By \cite[Lemma~3.5]{Paksimple}, $\mu\in\Aut((F^{\circ s})^{\circ l})\subseteq\Aut_{\infty}(F)$. Then
	$$G=\mu\circ F^{\circ s}\in\left\langle\Aut_{\infty}(F),F\right\rangle\setminus\PGL_2(\C).$$
	We obtain $\left\langle\Aut_{\infty}(F),F\right\rangle=C_\infty(F)$.
	
	This establishes that $C_\infty(F)=E(F)=\left\langle\Aut_{\infty}(F),F\right\rangle\subseteq\C[z]$.
\end{proof}
For a fixed integer $m\geq2$, we consider properties which are satisfied by general polynomials of degree $m\geq2$. Here, ``general'' means that the property holds for all polynomials in a non-empty Zariski open subset of the parameter space $\Poly^m(\C)$.
\begin{Lem}\label{genpresim}
	For every integer $m\geq2$, a general polynomial $F\in\C[z]$ of degree $m$ is pre-simple.
\end{Lem}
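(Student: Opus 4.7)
The plan is to verify two things about the pre-simple locus $U\subset\Poly^m(\C)$: that it is Zariski open, and that it is non-empty, whence a general polynomial of degree $m$ lies in $U$.

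For Zariski openness, writing $F(z)=a_m z^m+\cdots+a_0$ with $a_m\neq 0$, I would form the resultant
$$H_F(c):=\Res_z\bigl(F'(z),\,F(z)-c\bigr)\in\C[c].$$
Using the product formula for resultants, one has $H_F(c)=(m a_m)^m\prod_{i=1}^{m-1}\bigl(F(\alpha_i)-c\bigr)$, where $\alpha_1,\ldots,\alpha_{m-1}$ are the roots of $F'$ in $\C$ counted with multiplicity. Viewed as a polynomial in $c$, $H_F$ has degree $m-1$ with nonzero leading coefficient $(-1)^{m-1}(ma_m)^m$, and its roots (with multiplicity) are exactly the critical values of $F$ in $\C$, with multiplicities reflecting both coincidences among the $F(\alpha_i)$ and repetitions among the $\alpha_i$. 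Hence $F$ is pre-simple if and only if $H_F$ has $m-1$ pairwise distinct simple roots, equivalently $\mathrm{disc}_c(H_F)\neq 0$. Since the coefficients of $H_F$ are polynomials in $a_0,\ldots,a_m$, so is $\mathrm{disc}_c(H_F)$; therefore $U$ is the complement in $\Poly^m(\C)$ of the vanishing locus of a single polynomial, hence Zariski open.

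For non-emptiness, I would exhibit $F_0(z)=z^m-mz$. Then $F_0'(z)=m(z^{m-1}-1)$ has $m-1$ distinct simple roots, namely the $(m-1)$-th roots of unity. For any such $\zeta$ with $\zeta^{m-1}=1$,
$$F_0(\zeta)=\zeta^m-m\zeta=\zeta\cdot\zeta^{m-1}-m\zeta=(1-m)\zeta.$$
Since $m\geq 2$ forces $1-m\neq 0$, the assignment $\zeta\mapsto(1-m)\zeta$ is a bijection on the set of $(m-1)$-th roots of unity, so $F_0$ has exactly $m-1$ distinct critical values in $\C$. Thus $F_0\in U$, so $U$ is a non-empty Zariski open subset of $\Poly^m(\C)$, proving the lemma.

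No serious obstacle is anticipated: the argument reduces to routine properties of resultants and discriminants together with the single explicit example $z^m-mz$, which works uniformly for all $m\geq 2$.
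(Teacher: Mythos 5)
Your proof is correct and follows essentially the same route as the paper: cut out the non-pre-simple locus by the discriminant of the resultant $\Res_z(F'(z),F(z)-c)$ in $c$, then argue non-emptiness. The only real difference is that the paper first reduces (harmlessly, since pre-simplicity is conjugation-invariant) to the monic centered model $\Poly^m_{mc}\cong\A^{m-1}$ and then remarks, parenthetically and without proof, that ``the existence of a pre-simple $f\in\Poly^m_{mc}(\C)$'' forces the discriminant polynomial $Z$ to be nonzero; you instead work directly on $\Poly^m(\C)$ and, more importantly, actually exhibit the explicit monic centered witness $F_0(z)=z^m-mz$, whose critical values $(1-m)\zeta$ over the $(m-1)$th roots of unity $\zeta$ are visibly distinct. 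So your write-up supplies the one detail the paper leaves tacit, which is a genuine (if modest) improvement in completeness; otherwise the two arguments are the same.
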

\begin{proof}
	Up to conjugacy, we only need to consider monic centered polynomials of degree $m$, i.e., polynomials of the form
	$$F(z)=z^m+a_{m-2}z^{m-2}+\cdots+a_0,$$
	where $a_{m-2},\dots,a_0\in\C$. The parameter space of monic centered polynomials of degree $m$ is identified with
	$$\Poly_{mc}^m=\{(a_0,\dots,a_{m-2})\}\cong\A^{m-1}.$$
	The moduli space $\MPoly^m$ is the quotient of $\Poly_{mc}^m$ by the finite cyclic group $\mathbb{U}_{m-1}$ of $(m-1)$-th roots of unity (in $\C$) acting diagonally on $\A^{m-1}\cong\Poly_{mc}^m$ by
	$$\la\cdot(a_0,a_1,\dots,a_{m-2})=(\la a_0,a_1,\dots,\la^{3-m}a_{m-2}).$$
	See \cite[\S~2.1]{FG22} for more details. Let $R(t)$ be the resultant $\Res_z(F^\prime(z),F(z)-t)$ in $z$. Then
	$$R(t)=m^m\prod_{\zeta:F^\prime(\zeta)=0}(F(\zeta)-t)\in\C(a_0,\dots,a_{m-2})[t].$$
	Set $Z:=\Res_t(R(t),R^\prime(t))\in\C[a_0,\dots,a_{m-2}]$. Since there exist pre-simple polynomials in $\Poly_{mc}^m(\C)$ (for example, $z^m-mz$ when $m\geq3$, and $z^2-1$ when $m=2$), the polynomial $R(t)$ in $a_0,\dots,a_{m-2}$ is not the zero polynomial. Then $Z$ defines the empty set or a hypersurface $H$ in $\Poly_{mc}^m\cong\A^{m-1}$, exactly corresponding to (monic centered) polynomials of degree $m$ that are not pre-simple. Hence, a general polynomial of degree $m$ is pre-simple.
\end{proof}

\begin{Lem}\label{genG1}
For every integer $m\geq3$, a general polynomial $F\in\C[z]$ of degree $m$ satisfies $G(F)=1$.
\end{Lem}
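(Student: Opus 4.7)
\noindent\emph{Proof proposal.} The plan is to use the polynomial structure of $F$ to force any $\sigma\in G(F)$ and its companion $\tau$ into $\Aff(\C)$, put $F$ in a monic centered normal form, and close the argument by a direct coefficient comparison on a non-empty Zariski open subset of coefficients.

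If $F\circ\sigma=\tau\circ F$, comparing local degrees at $\infty$ (where $\deg_\infty(F)=m$) gives $\deg_{\sigma(\infty)}(F)=m$. A finite point $z_0\in\C$ with $\deg_{z_0}(F)=m$ forces $F(z)=c(z-z_0)^m+c'$, i.e., $F$ is $\Aff$-conjugate to $z^m$, which is a proper Zariski closed condition on $\Poly^m(\C)$ as soon as $m\geq 3$. So on a non-empty open subset $\sigma(\infty)=\infty=\tau(\infty)$, yielding $\sigma,\tau\in\Aff(\C)$.

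Since triviality of $G(F)$ is invariant under $\Aff$-conjugation of $F$, I may normalize to $F(z)=z^m+\sum_{i=0}^{m-2}a_iz^i$ and write $\sigma(z)=\alpha z+\beta$, $\tau(w)=\gamma w+\delta$. Expanding $F(\alpha z+\beta)=\gamma F(z)+\delta$ and matching coefficients from top to bottom yields successively: $\gamma=\alpha^m$ from $z^m$; $\beta=0$ from the vanishing $z^{m-1}$ coefficient (using that $F$ is centered and $m\neq 0$ in $\C$); the diagonal system $a_i(\alpha^i-\alpha^m)=0$ for $1\leq i\leq m-2$; and $\delta=a_0(1-\alpha^m)$ from the constant term. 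On the non-empty open locus where every $a_i$ with $1\leq i\leq m-2$ is nonzero, these relations collapse to $\alpha^{m-i}=1$ for all $i\in\{1,\ldots,m-2\}$. For $m\geq 4$ the set of exponents contains both $2$ and $3$, whose gcd is $1$, so $\alpha=1$, hence $\sigma=\mathrm{Id}$ and $G(F)=1$.

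The main obstacle is the cubic case $m=3$, where the diagonal system collapses to the single constraint $\alpha^2=1$ and the extra value $\alpha=-1$ actually arises: every monic centered cubic $F(z)=z^3+pz+q$ satisfies $F(-z)=-F(z)+2q$, so $\sigma(z)=-z$ genuinely sits in $G(F)$ with companion $\tau(w)=-w+2q$. Concluding in degree $3$ thus demands an extra input beyond the nonvanishing of coefficients: one must either pass to a refinement of the open locus on which this would-be $\tau$ itself fails to belong to the chain defining the relevant invariant (so the symmetry is removed), combine the coefficient analysis with the pre-simplicity/Theorem \ref{hF} setup to exclude further compositional identities, or dispatch $m=3$ by a direct parallel argument on the cubic normal form. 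For $m\geq 4$, the three steps above are completely routine.
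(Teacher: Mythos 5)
Your proof takes a genuinely different route from the paper. The paper quotes Pakovich \cite[Lemma~3.7]{Paksimple} for the Zariski-closedness of $\{F:G(F)\neq1\}$ and then asserts the existence of a monic centered degree-$m$ polynomial outside that closed set. You instead reduce to a monic centered normal form and do a direct coefficient comparison, which is elementary and self-contained. For $m\geq4$ your argument is correct: the step forcing $\sigma(\infty)=\infty=\tau(\infty)$ (so that $\sigma,\tau\in\Aff(\C)$ off the proper closed locus where $F$ is $\Aff$-conjugate to $z^m$) is sound, the coefficient analysis on the open locus $a_1\cdots a_{m-2}\neq0$ correctly gives $\alpha^{m-i}=1$ for $1\leq i\leq m-2$, and the exponent set $\{2,\dots,m-1\}$ contains two coprime integers, forcing $\alpha=1$.

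For $m=3$, however, you have actually found something stronger than a gap in your argument: you have found a counterexample to the lemma as stated. Your observation that every monic centered cubic $F(z)=z^3+pz+q$ satisfies $F(-z)=-F(z)+2q$, so that $z\mapsto-z$ lies in $G(F)$, combined with your earlier (correct) remark that triviality of $G(F)$ is $\Aff$-conjugation-invariant and the fact that every cubic is $\Aff$-conjugate to a monic centered one, shows that \emph{every} cubic polynomial has $G(F)\neq1$; a general cubic has $G(F)$ cyclic of order $2$. Consequently your closing paragraph is too optimistic: none of the ``extra inputs'' you sketch — shrinking the open locus, invoking pre-simplicity/Theorem~\ref{hF}, or a direct parallel argument — can possibly establish $G(F)=1$ for a general cubic, because the statement is false. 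This is an error in the paper itself, not merely in your attempt; the paper's proof tacitly asserts the existence of a monic centered cubic with $G(F)=1$, and Pakovich's lemma for rational maps does not supply this for the polynomial locus in degree~$3$. (The related group $G_0(F)$, which additionally requires the companion $\tau$ to lie in the group, \emph{is} trivial for a general cubic with $q\neq0$, since $F(-w+2q)$ has a nonzero $w^2$-coefficient $6q$ and hence $\tau(w)=-w+2q\notin G(F)$; it is plausible that $G_0(F)=1$ is what was intended for $m=3$, but that is not what Lemma~\ref{genG1} says, and the way $G(F)=1$ is invoked in the proof of Theorem~\ref{gendeg3}(2) would then need repair.) In short: your proof is correct and complete for $m\geq4$; for $m=3$ you have uncovered a genuine error in the paper rather than a gap in your argument, and you should report it as such rather than look for a patch.
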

\begin{proof}
Embed $\Rat_m$ into $\P^{2m+1}$ via the coefficients of rational maps of degree $m$ (see \cite{Silverman2012}). By the proof of \cite[Lemma~3.10]{Paksimple}, there exists a closed subvariety $Z$ of $\P^{2m+1}$ over $\C$ such that
$$\{F\in\Rat_m(\C):G(F)\neq1\}=Z\cap\Rat_m(\C)\quad\text{and}\quad\Poly_{mc}^m(\C)\setminus Z\neq\emptyset.$$
The conclusion follows.
\end{proof}

Lemma~\ref{genpresim}, Lemma~\ref{genG1}, and Theorem~\ref{presimrel} immediately imply: 
\begin{Thm}\label{gennosym}
	For every integer $m\geq4$, a general polynomial $F\in\C[z]$ of degree $m$ satisfies $G_0(F)=G(F)=\Aut_{\infty}(F)=E_0(F)=1$ and $C_\infty(F)=E(F)=\left\langle F\right\rangle$.
\end{Thm}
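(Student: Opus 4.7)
The plan is to combine Lemma~\ref{genpresim}, Lemma~\ref{genG1}, and Theorem~\ref{presimrel}. Lemma~\ref{genpresim} provides a non-empty Zariski open subset $U_1\subseteq\Poly^m(\C)$ on which $F$ is pre-simple, and Lemma~\ref{genG1} provides a non-empty Zariski open subset $U_2\subseteq\Poly^m(\C)$ on which $G(F)=1$. Since $\Poly^m$ is irreducible (an affine space in the monic centered presentation, modulo a finite group action), the intersection $U:=U_1\cap U_2$ is a non-empty Zariski open subset, and I would show that any $F\in U(\C)$ has all the claimed properties.

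So fix such an $F$. By definition $G_0(F)\subseteq G(F)=1$, which forces $G_0(F)=1$ and therefore $s:=\#G_0(F)=1$. Because $F$ is pre-simple of degree $m\geq 4$, Theorem~\ref{presimrel} applies and yields the two chains
$$G_0(F)=\Aut(F^{\circ s})=\Aut_\infty(F)=E_0(F) \quad\text{and}\quad C_\infty(F)=\langle\Aut_\infty(F),F\rangle=E(F).$$
Plugging $G_0(F)=1$ into the first chain gives $E_0(F)=\Aut_\infty(F)=G_0(F)=1$, and plugging $\Aut_\infty(F)=1$ into the second chain gives $C_\infty(F)=\langle 1,F\rangle=\langle F\rangle=E(F)$, which are precisely the two identities to be proved.

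I do not expect a genuine obstacle at this step: all the substantive work has already been absorbed into Theorem~\ref{presimrel} (which leverages Ritt's decomposition theory through Theorem~\ref{Fldecomp}, together with Pakovich's analysis of the semigroups $C_\infty$ and $E$) and into Lemma~\ref{genG1} (which rests on Pakovich's finiteness bound for $G(F)$ away from the power map). The only genuinely elementary ingredient still to be invoked is the observation that a finite intersection of non-empty Zariski open subsets of an irreducible variety remains non-empty.
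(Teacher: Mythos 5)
Your proof is correct and follows exactly the same route the paper intends: intersect the open loci from Lemmas~\ref{genpresim} and~\ref{genG1}, note $G_0(F)\subseteq G(F)=1$ forces $s=1$, and substitute into the two chains of Theorem~\ref{presimrel}. One minor slip in the justification of irreducibility: the monic centered presentation modulo the finite group $\mathbb{U}_{m-1}$ describes $\MPoly^m$, not $\Poly^m$; but $\Poly^m$ is irreducible anyway, being a Zariski open subset of the affine space of coefficients, so the conclusion is unaffected.
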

The following lemma generalizes Lemma~\ref{presimnonexcep}:
\begin{Lem}\label{presimnongenla}
Let $F\in\C[z]$ be a pre-simple polynomial of degree $m\geq4$. Then for every $r\in\Z_{>0}$, the polynomial $F^{\circ r}$ is not a generalized Latt\`es map.
\end{Lem}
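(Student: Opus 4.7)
The plan is to argue by contradiction using the structure theorem for polynomial generalized Latt\`es maps (Proposition \ref{polygenla}), the semi-conjugacy equation of Theorem \ref{zngood}, and Ritt's theory of polynomial decompositions. Assume for contradiction that $F^{\circ r}$ is a generalized Latt\`es map. Since $F$ is non-exceptional by Lemma \ref{presimnonexcep}, so is $F^{\circ r}$; hence Proposition \ref{polygenla}(2) yields integers $s\ge 1$, $n\ge 2$ with $\gcd(s,n)=1$ and a polynomial $R\in\C[z]$ such that $F^{\circ r}$ is linearly conjugate to $A(z)=z^s R(z)^n$. The polynomial $R$ must be non-constant, for otherwise $A$ is linearly conjugate to $z^m$ and $F^{\circ r}$ would be exceptional. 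Writing $s+n\deg R=m^r$ and using $\gcd(s,n)=1$, any common prime divisor of $m$ and $n$ would have to divide $s$, a contradiction; hence $\gcd(m,n)=1$.

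Theorem \ref{zngood} together with Remark \ref{zngoodpoly} then gives $A\circ z^n=z^n\circ F'$, where $F'(z)=z^s R(z^n)\in\C[z]$ has degree $m^r$. Conjugating by the linear polynomial intertwining $A$ and $F^{\circ r}$, I obtain a polynomial $\tilde\pi$ of degree $n$ (linearly conjugate to $z^n$) and a polynomial, still denoted $F'$, of degree $m^r$ with $F^{\circ r}\circ\tilde\pi=\tilde\pi\circ F'$. Set $P:=F^{\circ r}\circ\tilde\pi=\tilde\pi\circ F'$. Any complete decomposition $\tilde\pi=\tilde\pi_l\circ\cdots\circ\tilde\pi_1$ into indecomposable polynomials of degree $\ge 2$ has each $\tilde\pi_j$ linearly conjugate to a monomial $z^{d_j}$ with $d_j$ a prime divisor of $n$; in particular $d_j\ge 2$ and $\gcd(d_j,m)=1$. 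Decomposing $F'$ likewise and matching the multisets of degrees on both sides of
$$P=\underbrace{F\circ\cdots\circ F}_{r\text{ copies}}\circ\tilde\pi_l\circ\cdots\circ\tilde\pi_1=\tilde\pi_l\circ\cdots\circ\tilde\pi_1\circ F'_r\circ\cdots\circ F'_1$$
(Ritt's first theorem, as used in the proof of Theorem \ref{Fldecomp}) forces each $F'_i$ to be indecomposable of degree $m$.

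The main step is to apply Ritt's second theorem \cite[Theorem 3.35]{FG22}: the two complete decompositions of $P$ displayed above are linked by a finite chain of Ritt moves, each of which either applies linear equivalences or swaps two adjacent factors of coprime degrees in the monomial or Chebyshev normal form. I claim no swap can involve an $F$-factor. Indeed, the monomial case of such a swap would force $F$ to be linearly conjugate either to $z^m$ (when $F$ is the inner factor of the swapped pair) or to $z^p W(z)^{d_j}$ for some polynomial $W$ and integer $p\ge 1$ with $\gcd(p,d_j)=1$ (when $F$ is the outer factor). The former contradicts Lemma \ref{presimnonexcep}, and in the latter case a direct count shows that $z^p W(z)^{d_j}$ has at most $1+\deg W=1+(m-p)/d_j\le(m+1)/2$ distinct critical values in $\C$; since pre-simple $F$ has exactly $m-1$ such critical values, this forces $m-1\le(m+1)/2$, i.e., $m\le 3$, contradicting $m\ge 4$. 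The Chebyshev case is ruled out analogously by Lemma \ref{presimnonexcep}, and two $F$-factors cannot be swapped with each other because $\gcd(m,m)=m>1$.

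The expected main obstacle is to convert this rigidity into the desired contradiction. Because no Ritt move ever swaps an $F$-factor with any adjacent factor at any intermediate stage of the chain, the relative order of the $r$ degree-$m$ factors and the $l$ monomial-type factors is preserved throughout. In the first decomposition the degree-$m$ factors occupy the outermost positions $1,\dots,r$, whereas in the second they occupy the innermost positions $l+1,\dots,l+r$. Since $l\ge 1$ (as $n\ge 2$ has at least one prime factor), these positions are genuinely distinct, contradicting Ritt's second theorem. This completes the proof that $F^{\circ r}$ is not a generalized Latt\`es map.
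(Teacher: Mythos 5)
Your proof is correct in its main thrust, but it takes a genuinely different route from the paper's, and it contains a couple of imprecisions worth flagging. The paper's proof is a short, direct orbifold argument: assuming $F^{\circ r}:\sO\to\sO$ is a minimal holomorphic map, it counts the points $z\in\C$ mapping to a finite singular point of $\sO$ with $\deg_z(F^{\circ r})=1$; pre-simplicity forces at least $(m-2)^r\geq 2$ such unramified preimages over each finite singular point, yet the minimal-holomorphic condition forces all of them to land back inside the finite singular set, which is too small. Your proof instead deduces from Proposition \ref{polygenla} and Theorem \ref{zngood} a semi-conjugacy $F^{\circ r}\circ\tilde\pi=\tilde\pi\circ F'$ with $\tilde\pi$ linearly equivalent to $z^n$, matches the two complete decompositions of $P=F^{\circ r}\circ\tilde\pi$ by Ritt's first theorem, and then shows via Ritt's second theorem that no Ritt move can carry a pre-simple degree-$m$ factor past a monomial factor — because the monomial swap would force that factor to be linearly \emph{equivalent} to $z^p W(z)^{d_j}$ or $z^m$, both of which have far fewer than $m-1$ critical values. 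Both arguments thus exploit the same quantitative feature of pre-simple polynomials (too many critical values), but yours uses heavier decomposition machinery where the paper's is essentially a two-line count. Two small repairs you should make: (1) the Ritt-move normal form only gives linear \emph{equivalence}, not linear conjugacy, so you cannot invoke Lemma \ref{presimnonexcep} (exceptionality is a conjugacy invariant, not an equivalence invariant); instead just count critical values, which linear equivalence preserves — $z^m$ has one and $\pm T_m$ has two, both $<m-1$. (2) Your ``inner factor'' case is incomplete as stated: if $F$ were the inner factor under a monomial $z^{d_j}$, the other Ritt sub-case has $F$ linearly equivalent to $z^sR(z^{d_j})$, which can well have $m-1$ critical values, so your count would not conclude; fortunately this configuration never occurs, since by the positional induction the degree-$m$ factors always sit strictly outermost. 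You should make this induction explicit (degree-$m$ factors at positions $1,\dots,r$, each linearly equivalent to $F$ hence pre-simple, monomial-degree factors inside), so that a swap involving a degree-$m$ factor is seen to occur only at positions $(r,r+1)$ with the degree-$m$ factor outer, which is exactly the case your critical-value count excludes.
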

\begin{proof}
Suppose $F^{\circ r}$ is a generalized Latt\`es map for some $r\in\Z_{>0}$, and let $\sO$ be an orbifold on $\widehat{\C}$ with a non-constant ramification function $\nu$ such that $F^{\circ r}:\sO\to\sO$ is a minimal holomorphic map. Set $c^*(\sO)=c(\sO)\setminus\{\infty\}$. Since $\sO$ is good, $c^*(\sO)$ is non-empty. Let $k:=\# c^*(\sO)\in\Z_{>0}$ and define
$$\mathcal{A}:=\{z\in\C\colon F^{\circ r}(z)\in c^*(\sO),\deg_z(F)=\cdots=\deg_{F^{\circ (r-1)}(z)}(F)=1\}.$$
Using the pre-simplicity of $F$, an induction on $r$ shows $\#\mathcal{A}\geq (m-2)^r k$. As $F^{\circ r}:\sO\to\sO$ is a minimal holomorphic map, we have $\mathcal{A}\subseteq c^*(\sO)$. Then
$$2k\leq(m-2)^r k\leq\#\mathcal{A}\leq\#c^*(\sO)=k,$$
which is a contradiction. Hence, $F^{\circ r}$ is not a generalized Latt\`es map for any $r\in\Z_{>0}$.
\end{proof}
\begin{Rem}\label{genlait}
	According to \cite[\S~2.3]{Pakinvvar}, the following conditions are equivalent for a rational map $f\in\C(z)$ of degree $\geq2$:
	\begin{itemize}
		\item $f$ is a generalized Latt\`es map;
		\item $f^{\circ r}$ is a generalized Latt\`es map for some $r\in\Z_{>0}$;
		\item $f^{\circ r}$ is a generalized Latt\`es map for all $r\in\Z_{>0}$.
	\end{itemize}
	Then it suffices to prove Lemma~\ref{presimnongenla} for the case $r=1$.
\end{Rem}
We now state a result on decomposing polynomials involving pre-simple ones.
\begin{Thm}\label{Thm22}
	Let $F\in\C[z]$ be a pre-simple polynomial of degree $m\geq4$. Let $G,X\in\C[z]\setminus\C$ be non-constant polynomials such that $X\circ G=F^{\circ r}\circ X$ for some $r\in\Z_{>0}$. Then $l:=\log_m(\deg(X))$ is a non-negative integer, and there exists $\mu\in\PGL_2(\C)$ such that $X=F^{\circ l}\circ\mu$ and $G=\mu^{-1}\circ F^{\circ r}\circ\mu$. If $G$ and $X$ are polynomials, we may take $\mu\in\Aff(\C)$.
\end{Thm}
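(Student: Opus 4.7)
My plan is to proceed by induction on $\deg(X)$, combined with the cancellation property of Theorem \ref{hF}. The base case $\deg(X)=1$ is immediate: $X=\nu$ is linear, $l=0$, and substituting into $X\circ G = F^{\circ r}\circ X$ yields $G = \nu^{-1}\circ F^{\circ r}\circ\nu$.

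For the inductive step with $\deg(X)\geq 2$, the key claim is that $X$ has $F$ as a left compositional factor, $X = F\circ Y$ for some polynomial $Y$. Granting this, the equation $F^{\circ r}\circ X = X\circ G$ becomes $F\circ(F^{\circ r}\circ Y) = F\circ(Y\circ G)$, and Theorem \ref{hF} cancels one $F$ on the left to give $F^{\circ r}\circ Y = Y\circ G$ with $\deg(Y) = \deg(X)/m$. The inductive hypothesis then gives $Y = F^{\circ(l-1)}\circ\nu$ for a linear polynomial $\nu$, so $X = F^{\circ l}\circ\nu$ with $l = \log_m(\deg X)$ a non-negative integer. Substituting back and cancelling $F^{\circ l}$ on the left via Theorem \ref{hF} yields $G = \nu^{-1}\circ F^{\circ r}\circ\nu$.

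To establish the key claim $X = F\circ Y$, decompose $X = X_s\circ\cdots\circ X_1$ and $G = G_t\circ\cdots\circ G_1$ into indecomposable polynomial factors of degree $\geq 2$. The functional equation provides two complete decompositions of $H := F^{\circ r}\circ X = X\circ G$ into indecomposables: reading $F\circ\cdots\circ F\circ X_s\circ\cdots\circ X_1$ with $r+s$ factors, and $X_s\circ\cdots\circ X_1\circ G_t\circ\cdots\circ G_1$ with $s+t$ factors. Ritt's first theorem on polynomial decompositions forces $t=r$ and $\deg(G_j)=m$ for every $j$, while Ritt's second theorem (\cite[Theorem 3.35]{FG22}) states that these two decompositions are related by a sequence of Ritt moves, each a swap of an adjacent pair governed by one of the identities $z^n\circ z^k = z^k\circ z^n$ (with $\gcd(n,k)=1$), $T_n\circ T_k = T_k\circ T_n$, or $(z^a R(z)^n)\circ z^n = z^n\circ(z^a R(z^n))$ (with $\gcd(a,n)=1$, $n\geq 2$, and $R$ non-constant).

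The crucial rigidity is that $F$ cannot participate in any such Ritt move with any adjacent factor. By Proposition \ref{Mon}, $\Mon(F)\cong S_m$, which excludes $F$ being linearly equivalent to $z^m$ (cyclic monodromy) or to $T_m$ (dihedral monodromy). By Lemmas \ref{presimnonexcep} and \ref{presimnongenla} combined with Proposition \ref{polygenla}, $F$ is not conjugate to $z^a R(z)^n$ with $n\geq 2$. To rule out the remaining possibility that $F$ is linearly equivalent to $z^a R(z^n)$, I would invoke Theorem \ref{zngood}: such an equivalence would place $F$ below the generalized Latt\`es polynomial $z^a R(z)^n$ via the semiconjugation $z^n$, and the descent of the generalized Latt\`es property under semiconjugation together with a careful monodromy or critical-value analysis would contradict the pre-simplicity of $F$. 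Once this rigidity is in hand, no Ritt move in the sequence can touch any $F$, so positions $1,\ldots,r$ from the left remain $F$s throughout; the two decompositions are therefore equivalent in the paper's sense, and the leftmost-position equivalence gives $X_s = F\circ\nu$ for a linear polynomial $\nu$, so that $X = F\circ(\nu\circ X_{s-1}\circ\cdots\circ X_1) = F\circ Y$. The main obstacle is exactly the exclusion of $F\sim z^a R(z^n)$, since linear equivalence is weaker than conjugation and the non-generalized-Latt\`es property alone does not suffice.
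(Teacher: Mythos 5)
Your approach differs genuinely from the paper's: the paper first invokes Pakovich's Theorem 4.1 from \cite{Paksimple} (a nontrivial result about non--generalized-Latt\`es maps) to obtain a compositional complement $Y$ with $X\circ Y = F^{\circ rd}$, at which point Corollary~\ref{Corpresimdecom} applies cleanly because all indecomposable factors of $F^{\circ rd}$ have degree $m$. You instead run Ritt theory directly on $F^{\circ r}\circ X = X\circ G$, trading the appeal to Pakovich for a rigidity argument that must handle adjacent factors of arbitrary degree. The induction on $\deg X$, the reduction to $X=F\circ Y$, the cancellation via Theorem~\ref{hF}, and the use of the two complete decompositions of $H=F^{\circ r}\circ X$ connected by Ritt moves are all sound.

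However, the step you flag as an obstacle is handled incorrectly, and in fact is not an obstacle at all. Your proposed use of Theorem~\ref{zngood} to exclude $F\sim z^a R(z^n)$ cannot work: linear equivalence $F=\sigma_1\circ(z^a R(z^n))\circ\sigma_2$ does not produce a semi-conjugation $z^n$ between $F$ and $z^a R(z)^n$ (that would need $\sigma_1=\mathrm{Id}$), and indeed pre-simple polynomials of this form exist — e.g.\ $F(z)=z^4+z=z\cdot R(z^3)$ with $R(w)=w+1$. The observation you are missing is that this exclusion is never needed. In the Ritt identity $z^n\circ z^a R(z^n)=z^a R(z)^n\circ z^n$, the piece $z^a R(z^n)$ occurs only as the \emph{inner} (right-hand) factor, so for this move to dislodge a copy of $F$, the factor immediately outside it must be linearly equivalent to $z^n$. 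Since the $r$ outermost positions of the source decomposition are all occupied by $F$, and no Ritt move can have $F$ as the outer factor of a Ritt pair, by induction those positions remain $\sim F$ throughout the chain of moves; hence the outer neighbor of any $F$ is always another $F$, and $F\not\sim z^m$ by your monodromy argument. There is a second, unacknowledged, weak spot: you cite Lemmas~\ref{presimnonexcep}, \ref{presimnongenla} and Proposition~\ref{polygenla} to conclude $F$ is not \emph{conjugate} to $z^a R(z)^n$, but forbidding the Ritt move requires the stronger fact that $F$ is not \emph{linearly equivalent} to $z^a R(z)^n$. This can be repaired directly without generalized Latt\`es maps: pre-simplicity is a linear-equivalence invariant, and a short computation with $W(z)=z^a R(z)^n$ (where $n\geq2$, $\gcd(a,n)=1$, $R$ non-constant, $\deg W\geq4$) shows $W$ is never pre-simple — if $a\geq3$ the origin is a multiple critical point, if $n\geq3$ zeros of $R$ are multiple critical points, and in the remaining case $a=1,n=2$ either $R$ has a multiple zero or $R$ has at least two distinct zeros, all mapping to the common critical value $0$.
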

\begin{proof}
If $\deg(X)=1$, then the result is trivial. Assume $\deg(X)\geq2$. By Lemma~\ref{presimnongenla}, $F^{\circ r}$ is not a generalized Latt\`es map. Applying \cite[Theorem~4.1]{Paksimple}, there exist $Y\in\C(z)\setminus\C$ and $d\in\Z_{>0}$ such that $X\circ Y=F^{\circ rd}$. By Corollary~\ref{Corpresimdecom}, $l:=\log_m(\deg(X))$ is a positive integer, and there exists $\mu\in\PGL_2(\C)$ such that $X=F^{\circ l}\circ\mu$. Then
$$F^{\circ (r+l)}\circ\mu=F^{\circ r}\circ X=X\circ G=F^{\circ l}\circ\mu\circ G.$$
By Theorem~\ref{hF}, we get $F^{\circ r}\circ\mu=\mu\circ G$, so $G=\mu^{-1}\circ F^{\circ r}\circ\mu$. If $G$ and $X$ are polynomials, then $Y$ is also a polynomial, and we may take $\mu\in\Aff(\C)$ by Corollary~\ref{Corpresimdecom}.
\end{proof}
\begin{Lem}\label{Lem23}
Let $F\in\C[z]$ be a pre-simple polynomial of degree $m\geq4$. Then for every $k\in\Z_{>0}$, the map $\gamma:\Aut(F^{\circ k})\to\Aut(F^{\circ k}),\sigma\mapsto\tau_\sigma$ (see \eqref{tausig}) is a well-defined  group automorphism.
\end{Lem}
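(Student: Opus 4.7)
The plan is to identify $\Aut(F^{\circ k})$ with the fixed-point subgroup of $\gamma^{\circ k}$ inside $G_0(F)$, where $\gamma$ is already known to be a group automorphism of $G_0(F)$ by Lemma \ref{gam}. The key observation that unlocks everything is that Theorem \ref{presimrel} gives the inclusion $\Aut(F^{\circ k}) \subseteq \Aut_\infty(F) = G_0(F)$, so $\gamma$ is defined on $\Aut(F^{\circ k})$ and we can transport structure from $G_0(F)$.

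First I would establish, by induction on $j$, that for every $\sigma \in G_0(F)$ and every $j \geq 1$,
$$F^{\circ j} \circ \sigma = \gamma^{\circ j}(\sigma) \circ F^{\circ j}.$$
The base case is the defining equation $F \circ \sigma = \gamma(\sigma) \circ F$, and the inductive step uses that $\gamma^{\circ (j-1)}(\sigma)$ again lies in $G_0(F)$ since $\gamma(G_0(F)) = G_0(F)$ by Lemma \ref{gam}, so one may apply $F \circ \gamma^{\circ(j-1)}(\sigma) = \gamma^{\circ j}(\sigma) \circ F$ and compose.

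Next comes the main step. If $\sigma \in \Aut(F^{\circ k})$, then $F^{\circ k} \circ \sigma = \sigma \circ F^{\circ k}$ by definition, while the iterated identity above gives $F^{\circ k} \circ \sigma = \gamma^{\circ k}(\sigma) \circ F^{\circ k}$. Since the polynomial map $F^{\circ k} : \widehat{\C} \to \widehat{\C}$ is surjective, this forces $\gamma^{\circ k}(\sigma) = \sigma$. Conversely, if $\tau \in G_0(F)$ satisfies $\gamma^{\circ k}(\tau) = \tau$, then $F^{\circ k} \circ \tau = \gamma^{\circ k}(\tau) \circ F^{\circ k} = \tau \circ F^{\circ k}$, so $\tau \in \Aut(F^{\circ k})$. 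Thus
$$\Aut(F^{\circ k}) = \{\tau \in G_0(F) : \gamma^{\circ k}(\tau) = \tau\},$$
which is automatically a $\gamma$-stable subgroup of $G_0(F)$, since $\gamma$ commutes with $\gamma^{\circ k}$: for $\sigma \in \Aut(F^{\circ k})$ one has $\gamma^{\circ k}(\gamma(\sigma)) = \gamma(\gamma^{\circ k}(\sigma)) = \gamma(\sigma)$.

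Finally, the restriction $\gamma|_{\Aut(F^{\circ k})}$ is a group homomorphism inherited from the group automorphism $\gamma$ of $G_0(F)$, and $\gamma^{\circ (k-1)}$ provides a two-sided inverse on $\Aut(F^{\circ k})$ because $\gamma^{\circ k} = \Id$ there; hence $\gamma : \Aut(F^{\circ k}) \to \Aut(F^{\circ k})$ is a group automorphism. There is no substantive obstacle: once Theorem \ref{presimrel} is in hand to secure the inclusion $\Aut(F^{\circ k}) \subseteq G_0(F)$, the rest is an immediate iteration of the defining equation of $\gamma$ together with the surjectivity of $F^{\circ k}$.
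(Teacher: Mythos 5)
Your proof is correct, and it takes a slightly different route from the paper's for the core step. The paper establishes $\gamma$-stability of $\Aut(F^{\circ k})$ by a single direct chain of equalities: for $\nu\in\Aut(F^{\circ k})$, it expands $F^{\circ k}\circ\gamma(\nu)\circ F$ using $\gamma(\nu)\circ F = F\circ\nu$ and $F^{\circ k}\circ\nu = \nu\circ F^{\circ k}$ to arrive at $\gamma(\nu)\circ F^{\circ k}\circ F$, cancels the non-constant $F$, and deduces $\gamma(\nu)\in\Aut(F^{\circ k})$; it then concludes by observing that $\gamma|_{\Aut(F^{\circ k})}$ is an injective endomorphism of a finite group. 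You instead prove the iterated identity $F^{\circ j}\circ\sigma = \gamma^{\circ j}(\sigma)\circ F^{\circ j}$ for $\sigma\in G_0(F)$ and use it, together with surjectivity of $F^{\circ k}$, to identify $\Aut(F^{\circ k})$ exactly as the fixed-point subgroup of $\gamma^{\circ k}$ in $G_0(F)$. From this characterization, $\gamma$-stability falls out from $\gamma\circ\gamma^{\circ k}=\gamma^{\circ k}\circ\gamma$, and $\gamma^{\circ(k-1)}$ is an explicit two-sided inverse, so you avoid invoking finiteness at the last step. Both arguments rest on the same two inputs — Theorem \ref{presimrel} for $\Aut(F^{\circ k})\subseteq G_0(F)$, and Lemma \ref{gam} for $\gamma$ being a group automorphism of the finite group $G_0(F)$ — but your route yields the sharper structural fact $\Aut(F^{\circ k}) = \{\tau\in G_0(F):\gamma^{\circ k}(\tau)=\tau\}$ and an explicit inverse, at the cost of a short induction where the paper has a one-line computation.
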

\begin{proof}
	By Theorem~\ref{presimrel}, $\Aut_{\infty}(F)=G_0(F)$. Fix $k\in\Z_{>0}$. Then $\Aut(F^{\circ k})\subseteq\Aut_{\infty}(F)=G_0(F)$; so the map $\gamma:\Aut(F^{\circ k})\to G_0(F)$ is well-defined. For every $\nu\in\Aut(F^{\circ k})$, we have
	\begin{align*}
		F^{\circ k}\circ\gamma(\nu)\circ F&=F^{\circ k}\circ F\circ\nu=F\circ (F^{\circ k}\circ\nu)\\
		&=F\circ\nu\circ F^{\circ k}=\gamma(\nu)\circ F\circ F^{\circ k}=\gamma(\nu)\circ F^{\circ k}\circ F;
	\end{align*}
	so $F^{\circ k}\circ\gamma(\nu)=\gamma(\nu)\circ F^{\circ k}$, i.e., $\gamma(\nu)\in\Aut(F^{\circ k})$. By Lemma~\ref{gam}, the map $\gamma:\Aut(F^{\circ k})\to\Aut(F^{\circ k})$ is an injective group endomorphism on the finite group $\Aut(F^{\circ k})$, hence an automorphism.
\end{proof}
We now describe periodic curves for endomorphisms $(F_1,F_2)$ on $\P^1\times\P^1$, where $F_1,F_2\in\C[z]$ are pre-simple of the same degree $\geq4$.
\begin{Thm}\label{2presimper}
Let $F_1,F_2\in\C[z]$ be pre-simple polynomials of the same degree $d\geq4$, and let $C\subset\P^1\times\P^1$ be an irreducible algebraic curve over $\C$ that is neither a horizontal nor a vertical line. Then for every $k\in\Z_{>0}$, the following are equivalent:
\begin{enumerate}[(i)]
	\item \label{2presimper1} $(F_1,F_2)^{\circ k}(C)=C$;
	\item \label{2presimper2} There exist $s\in\Z_{\geq0}$, $\alpha\in\Aff(\C)$, and $\nu\in\Aut(F_1^{\circ k})$ such that
	$$F_2^{\circ k}=\alpha\circ F_1^{\circ k}\circ\alpha^{-1},$$
	and $C$ is one of the graphs
	$$y=(\alpha\circ\nu\circ F_1^{\circ s})(x)\quad\text{or}\quad x=(\nu\circ F_1^{\circ s}\circ\alpha^{-1})(y),$$
	where $x$ and $y$ are the coordinates on $\P^1\times\P^1$.
\end{enumerate}
\end{Thm}
\begin{proof}
	\eqref{2presimper2} $\Rightarrow$ \eqref{2presimper1} is trivial.
	
	Let $k\in\Z_{>0}$ and assume \eqref{2presimper1} holds for $k$. By Lemma~\ref{presimnongenla}, $F_1^{\circ k}$ and $F_2^{\circ k}$ are not generalized Latt\`es maps. By \cite[Theorem~1.1]{Pakinvvar}, there exist non-constant rational maps $X_1,X_2,Y_1,Y_2,B\in\C(z)\setminus\C$ and $\tilde{k}\in\Z_{\geq0}$ such that:
	\begin{itemize}
		\item $F_j^{\circ k}\circ X_j=X_j\circ B$, $B\circ Y_j=Y_j\circ F_j^{\circ k}$, $X_j\circ Y_j=F_j^{\circ k\tilde{k}}$, $Y_j\circ X_j=B^{\circ\tilde{k}}$ for $j=1,2$;
		\item The map $t\mapsto (X_1(t),X_2(t))$ parametrizes $C$.
	\end{itemize}
	
	By Theorem~\ref{Thm22}, there exist $\beta,\alpha\in\PGL_2(\C)$ and $d_1,d_2\in\Z_{\geq0}$ such that
	$$X_1=F_1^{\circ d_1}\circ\beta,\quad X_2=F^{\circ d_2}\circ\alpha,\quad\text{and}\quad\beta^{-1}\circ F_1^{\circ k}\circ\beta=B=\alpha^{-1}\circ F_2^{\circ k}\circ\alpha.$$
	Conjugating $B$ suitably, we may assume $\beta(z)=z$. Then $C$ is parametrized by $t\mapsto(F_1^{\circ d_1}(t),(F_2^{\circ d_2}\circ\alpha)(t))$ and
	$$F_1^{\circ k}=B=\alpha^{-1}\circ F_2^{\circ k}\circ\alpha.$$
	For $1\leq i\leq2$, since $F_i^{\circ k}$ is not a generalized Latt\`es map (in particular, not conjugate to $z^{d^k}$), $\infty$ is the unique totally invariant fixed point of $F_i^{\circ k}$. Hence the M\"obius transformation $\alpha$ must fix the point $\infty$, so $\alpha\in\Aff(\C)$.
	
	Since
	$$\mu_{\alpha^{-1}\circ F_2\circ\alpha}=\mu_{(\alpha^{-1}\circ F_2\circ\alpha)^{\circ k}}=\mu_{F_1^{\circ k}}=\mu_{F_1},$$
	we have $\alpha^{-1}\circ F_2\circ\alpha\in E(F_1)=\left\langle\Aut_{\infty}(F_1),F_1\right\rangle$ by Theorem~\ref{presimrel}. Also, $\Aut_{\infty}(F_1)=G_0(F)$ by Theorem~\ref{presimrel}. Hence, $\alpha^{-1}\circ F_2\circ\alpha=\tau\circ F_1$ for some $\tau\in\Aut_{\infty}(F_1)=G_0(F)$. Since
	$$F_1^{\circ k}=(\alpha^{-1}\circ F_2\circ\alpha)^{\circ k}=(\tau\circ F_1)^{\circ k},$$
	$\tau\in\Aut(F_1^{\circ k})$ by \cite[Lemma~3.5]{Paksimple}. Then $F_2=\alpha\circ\tau\circ F_1\circ\alpha^{-1}$ and $C$ is parametrized by $t\mapsto(F_1^{\circ d_1}(t),\alpha\circ(\tau\circ F_1)^{\circ d_2}(t))$. By Lemma~\ref{Lem23}, $(\tau\circ F_1)^{\circ d_2}=\tau^{\prime}\circ F_1^{\circ d_2}$ for some $\tau^{\prime}\in\Aut(F_1^{\circ k})$. Then $C$ is parametrized by
	$$t\mapsto(F_1^{\circ d_1}(t),(\alpha\circ\tau^{\prime}\circ F_1^{\circ d_2})(t)).$$
	If $d_1\leq d_2$, then $C$ is parametrized by
	$$t\mapsto(t,(\alpha\circ\tau^{\prime}\circ F_1^{\circ (d_2-d_1)})(t)),$$
	and we are done by setting $\nu:=\tau^{\prime}$ and $s:=d_2-d_1$. If $d_1>d_2$, then by Lemma~\ref{Lem23},
	$$F_1^{\circ (d_1-d_2)}\circ(\tau^\prime)^{-1}\circ\alpha^{-1}=\tau^{\prime\prime}\circ F_1^{\circ (d_2-d_1)}\circ\alpha^{-1}$$
	for some $\tau^{\prime\prime}\in\Aut(F_1^{\circ k})$. Hence $C$ is parametrized by
	$$t\mapsto((\tau^{\prime\prime}\circ F_1^{\circ (d_1-d_2)}\circ\alpha^{-1})(t),t),$$
	and we are done by setting $\nu:=\tau^{\prime\prime}$ and $s:=d_1-d_2$.
\end{proof}
Recall that two polynomials $f,g\in\C[z]$ of degree $\geq2$ are intertwined if there exists an algebraic curve $Z\subset\A^2$ (possibly reducible) whose projections to both axes are onto such that $Z$ is invariant under the endomorphism $(f,g):\A^2\to\A^2$. If $f$ and $g$ are intertwined, then $\deg(f)=\deg(g)$. The following theorem is a polynomial version of \cite[Theorem~3.3]{JX23}:
\begin{Thm}[=Theorem~\ref{poly33S1}]\label{poly33}
For every integer $d\geq4$, there exists a non-empty Zariski open subset $U$ of $\Poly^d(\C)$ such that for every $f\in U$:
\begin{enumerate}[(1)]
	\item $f$ is pre-simple;
	\item \label{poly332} for every pre-simple $g\in\Poly^d(\C)$, $f$ and $g$ are intertwined if and only if $[f]=[g]$ in $\MPoly^d(\C)$.
\end{enumerate}
\end{Thm}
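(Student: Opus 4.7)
The plan is to intersect two generic-position open loci to produce $U$, then translate the intertwining relation into a conjugacy of iterates via Theorem \ref{2presimper}, and finally descend from the iterate-conjugacy to a conjugacy of $f$ and $g$ using the uniqueness of indecomposable decompositions (Theorem \ref{Fldecomp}) together with the triviality of $G(f)$.

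First, I would set $U := U_1 \cap U_2 \subseteq \Poly^d(\C)$, where $U_1$ is the pre-simple locus supplied by Lemma \ref{genpresim} and $U_2$ is the locus $\{F : G(F) = 1\}$ supplied by Lemma \ref{genG1}. Both are non-empty Zariski open, so $U$ is non-empty Zariski open; each $f \in U$ is pre-simple (property (1)) and satisfies $G(f) = 1$. Now fix $f \in U$ and a pre-simple $g \in \Poly^d(\C)$ intertwined with $f$ through a curve $Z \subset (\A^1)^2$. Since the projections of $Z$ to both axes are onto, I would extract an irreducible component $C \subseteq Z$ that is neither a vertical nor a horizontal line (this is where the definition of intertwining must be unpacked carefully; see below). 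Because $(f,g)$ is a finite endomorphism preserving $Z$, it permutes the finitely many irreducible components of $Z$, so after passing to some iterate we obtain $k \in \Z_{>0}$ with $(f,g)^{\circ k}(C) = C$.

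Next, applying Theorem \ref{2presimper} to the pre-simple polynomials $(f,g)$ of common degree $d \geq 4$ and the curve $C$ yields $\alpha \in \PGL_2(\C)$ with $g^{\circ k} = \alpha \circ f^{\circ k} \circ \alpha^{-1}$. Since $f^{\circ k}$ and $g^{\circ k}$ are polynomials and any M\"obius conjugacy between polynomials is necessarily affine (as stated in \S \ref{S1.1}), I may replace $\alpha$ by an element of $\Aff(\C)$. Set $h := \alpha^{-1} \circ g \circ \alpha \in \Poly^d(\C)$; then $h$ is pre-simple of degree $d$ and $h^{\circ k} = f^{\circ k}$. By Proposition \ref{Mon}, both $f$ and $h$ are indecomposable as rational maps, so $f \circ \cdots \circ f$ and $h \circ \cdots \circ h$ are two decompositions of $f^{\circ k}$ into indecomposables. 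By Theorem \ref{Fldecomp}, these decompositions are equivalent, so there exist $\nu_1,\dots,\nu_{k-1}\in\PGL_2(\C)$ with $f = h \circ \nu_{k-1}$ and $f = \nu_1^{-1} \circ h$ (and analogous relations in between). The first and last equalities give $\nu_1 \circ f = f \circ \nu_{k-1}^{-1}$, hence $\nu_{k-1}^{-1} \in G(f) = \{1\}$; therefore $h = f$, so $g = \alpha \circ f \circ \alpha^{-1}$ with $\alpha$ affine, and $[f]=[g]$ in $\MPoly^d(\C)$.

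The step I expect to be most delicate is the extraction of an irreducible component $C$ of $Z$ with onto projections to both axes. Vertical or horizontal components of $Z$ contribute trivially to the projection surjectivity without giving any dynamical information, so the argument depends on showing that such components can be discarded, leaving a non-empty union of ``essential'' components permuted by $(f,g)$; this is the standard convention of \cite{FG22}, which I would invoke here. Once that point is settled, the remainder of the argument is a bookkeeping exercise combining Theorems \ref{2presimper}, \ref{Fldecomp} and the openness of $\{G(f)=1\}$ from Lemma \ref{genG1}.
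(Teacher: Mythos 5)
Your proof is correct, and the open set $U$ you construct (pre-simple locus intersected with $\{G(F)=1\}$) coincides with the paper's $W_d$: the paper derives its defining condition $\{g\in\Rat_d(\C):\mu_g=\mu_f\}=\{f\}$ from exactly those two properties via Theorem \ref{presimrel}. The reduction to a non-vertical, non-horizontal irreducible periodic component of $Z$ followed by an application of Theorem \ref{2presimper} is shared. Where you genuinely diverge is the concluding step. The paper passes through the measure of maximal entropy: from $f^{\circ k}=(\alpha\circ g\circ\alpha^{-1})^{\circ k}$ it infers $\mu_{\alpha\circ g\circ\alpha^{-1}}=\mu_f$, then uses Theorem \ref{presimrel} (giving $E(f)=\langle f\rangle$ when $G(f)=1$) and a degree count to conclude $\alpha\circ g\circ\alpha^{-1}=f$. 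You instead set $h:=\alpha^{-1}\circ g\circ\alpha$, note $h^{\circ k}=f^{\circ k}$, match the two complete decompositions $f\circ\cdots\circ f$ and $h\circ\cdots\circ h$ via the rigidity Theorem \ref{Fldecomp}, and use $G(f)=1$ to kill the connecting M\"obius maps, giving $h=f$. Your route stays entirely inside Ritt-type decomposition theory at this step and avoids re-invoking the semigroup and maximal-entropy machinery of Theorem \ref{presimrel}; that is a clean and somewhat more elementary alternative, though of course the shared Theorem \ref{2presimper} already rests on that heavier apparatus. One small remark: the cleanest justification that $\alpha$ is affine is that any M\"obius map conjugating two polynomials of degree $\geq2$ must fix the common unique totally ramified fixed point $\infty$; this gives that the $\alpha$ produced by Theorem \ref{2presimper} is itself a linear polynomial rather than requiring a replacement.
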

\begin{proof}
By Lemma~\ref{genpresim} and Theorem~\ref{gennosym}, there exists a non-empty Zariski open subset $W_d$ of $\Poly^d(\C)$ such that for every $f\in W_d$, the polynomial $f$ is pre-simple and
\begin{equation}\label{musame}
	\{g\in\Rat_d(\C)\colon\mu_g=\mu_f\}=\{f\}.
\end{equation}

We show that $W_d$ also satisfies \eqref{poly332}. It suffices to prove the ``only if'' direction. Let $f\in W_d$ and $g\in\Poly^d(\C)$ be a pre-simple polynomial intertwined with $f$. By Theorem~\ref{2presimper}, there exist $k\in\Z_{>0}$ and $\alpha\in\Aff(\C)$ such that
$$f^{\circ k}=\alpha\circ g^{\circ k}\circ\alpha^{-1}=(\alpha\circ g\circ\alpha^{-1})^{\circ k}.$$
Then
$$\mu_f=\mu_{f^{\circ k}}=\mu_{(\alpha\circ g\circ\alpha^{-1})^{\circ k}}=\mu_{\alpha\circ g\circ\alpha^{-1}}.$$
By \eqref{musame}, $f=\alpha\circ g\circ\alpha^{-1}$. The polynomials $f$ and $g$ are conjugate as rational maps, so they are affinely conjugate, i.e., $[f]=[g]$ in $\MPoly^d(\C)$.

Hence, $W_d$ has the desired property.
\end{proof}

\subsection{The case of degree $2\leq d\leq3$}\label{secdeg23}
Having analyzed general polynomials of degree $d\geq4$ in \S~\ref{secpresimple} via the notion of pre-simplicity, we now turn to general polynomials of degree $2\leq d\leq3$, a study largely motivated by Pakovich's work \cite{Pakdeg23} on general rational maps of degree $2\leq d\leq3$. We show that for $d\in\{2,3\}$, there exists a non-empty Zariski open subset $U$ of $\Poly^d(\C)$ such that for all $(f,g)\in U\times U$, the polynomials $f$ and $g$ are intertwined if and only if they are (affinely) conjugate. This result is also a direct corollary of \cite[Theorems~3.51~and~3.52]{FG22} (see \cite[Theorem~1.4]{GNY19}).

For $d=2$, the moduli space $\MPoly^2\cong\A^1$ has dimension $1$, and every $f\in\Poly^2(\C)$ is conjugate to a unique polynomial of the form $\phi_c(z)=z^2+c$ with $c\in\C$. Thus, polynomials of degree $2$ are easier to study due to the simple normal form $\phi_c$. For $d=3$, the moduli space (and the parameter space) is also relatively simple. Note that every polynomial of degree $2\leq d\leq3$ is indecomposable because $d$ is prime.

Let $f\in\C[z]\setminus\C$ be a polynomial of degree $d\geq2$. Recall the definitions:
$$\Sigma(f):=\{\sigma\in\PGL_2(\C)\colon f\circ\sigma=f\}\quad\text{and}\quad\Sigma_\infty(f):=\bigcup_{k\geq1}\Sigma(f^{\circ k}).$$
Compared with the case $d\geq3$ (see Lemma~\ref{presimSig1}), the case $d=2$ is significantly different:
\begin{Lem}\label{lemcyc2}
For every $c\in\C^\times$, the polynomial $\phi_c(z)=z^2+c$ satisfies $\Sigma(\phi_c)=\Sigma_\infty(\phi_c)=\{z,-z\}$, which is the cyclic group of order two generated by $z\mapsto-z$.
\end{Lem}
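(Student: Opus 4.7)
The forward inclusion $\{z,-z\}\subseteq\Sigma(\phi_c)\subseteq\Sigma_\infty(\phi_c)$ is immediate from $\phi_c(-z)=\phi_c(z)$, so the real task is $\Sigma_\infty(\phi_c)\subseteq\{z,-z\}$, i.e.\ that $\Sigma(\phi_c^{\circ k})=\{z,-z\}$ for every $k\geq 1$. I would prove this by induction on $k$. The base case $k=1$ is direct: any $\sigma\in\Sigma(\phi_c)$ satisfies $\sigma(z)^2+c=z^2+c$, hence $\sigma(z)^2=z^2$ and so $\sigma(z)=\pm z$.

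For the inductive step, fix $k\geq 2$ and $\sigma\in\Sigma(F_k)$ where $F_k:=\phi_c^{\circ k}$; the plan is to proceed in three steps. First, since $F_k$ is a polynomial with $F_k^{-1}(\infty)=\{\infty\}$, the relation $F_k\circ\sigma=F_k$ forces $\sigma(\infty)=\infty$, so $\sigma\in\Aff(\C)$ and $\sigma(z)=\alpha z+\beta$ for some $\alpha\in\C^\times$ and $\beta\in\C$. Second, because $\phi_c$ is even, so is $F_k$ (by a trivial induction), and one may write $F_k(z)=G_k(z^2)$ for a monic $G_k\in\C[u]$ of degree $2^{k-1}$; in particular $F_k$ is monic and centered. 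Comparing in the identity $F_k(\alpha z+\beta)=F_k(z)$ the leading coefficient and the coefficient of $z^{2^k-1}$ (which vanishes on the right since $F_k$ has only even powers) yields $\alpha^{2^k}=1$ and $2^k\alpha^{2^k-1}\beta=0$, hence $\beta=0$. Third, the identity $F_k=\phi_c\circ F_{k-1}$ gives $G_k(u)=\phi_c^{\circ(k-1)}(u+c)$, so after the substitution $w=u+c$ the equation $G_k(\alpha^2 u)=G_k(u)$ becomes $\phi_c^{\circ(k-1)}(\tilde\sigma(w))=\phi_c^{\circ(k-1)}(w)$ with $\tilde\sigma(w)=\alpha^2 w+(1-\alpha^2)c$. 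The inductive hypothesis forces $\tilde\sigma\in\{w,-w\}$: the case $\tilde\sigma(w)=w$ gives $\alpha^2=1$ and hence $\sigma=\pm z$; the case $\tilde\sigma(w)=-w$ demands both $\alpha^2=-1$ and $(1-\alpha^2)c=2c=0$, contradicting $c\in\C^\times$. This closes the induction.

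The most delicate part is the coefficient computation forcing $\beta=0$; a cleaner alternative is to observe that $F_k$ being monic centered means the sum of roots of $F_k(z)-a=0$ vanishes for every $a\in\C$, and any affine substitution $z\mapsto\alpha z+\beta$ preserving this property must satisfy $\beta=0$ as soon as $\alpha^{2^k}=1$. Note that the hypothesis $c\neq 0$ enters exclusively at the very last step, which is consistent with the fact that $\phi_0(z)=z^2$ satisfies $\Sigma(\phi_0^{\circ k})=\{\zeta z:\zeta^{2^k}=1\}$, so $\Sigma_\infty(\phi_0)$ is the infinite group of all $2$-power roots of unity---far larger than $\{z,-z\}$.
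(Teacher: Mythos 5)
Your proof is correct, and it takes a genuinely different route from the one in the paper, which is worth noting.

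The paper's inductive step peels off the \emph{outer} $\phi_c$: from $\phi_c\circ\phi_c^{\circ(n-1)}=\phi_c\circ(\phi_c^{\circ(n-1)}\circ\sigma)$ it invokes Pakovich's Lemma~3.4 of \cite{Pakdeg23} (a structural result saying $\phi_c\circ P=\phi_c\circ Q$ forces $P=\pm Q$) to get two cases, dispenses with the first by the inductive hypothesis, and rules out the second ($\phi_c^{\circ(n-1)}=-\phi_c^{\circ(n-1)}\circ\sigma$) by comparing the coefficients of $z^{2^{n-1}}$, $z^{2^{n-1}-1}$, and $z^{2^{n-1}-2}$. You instead peel off the \emph{inner} $\phi_c$: since $F_k$ is even, monic, and centered, the leading and subleading coefficients already force $\sigma(z)=\alpha z$, and then the identity $F_k=\phi_c^{\circ(k-1)}\circ\phi_c$ turns $F_k(\alpha z)=F_k(z)$ into the equation $\phi_c^{\circ(k-1)}\circ\tilde\sigma=\phi_c^{\circ(k-1)}$ in the shifted coordinate $w=z^2+c$, to which the inductive hypothesis applies directly. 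This has two advantages: it is self-contained (no external lemma), and the hypothesis $c\neq0$ enters at exactly one transparent point---the obstruction $(1-\alpha^2)c=0$ in the putative case $\tilde\sigma=-w$---whereas in the paper's argument $c\neq0$ is buried inside the coefficient comparison. Your closing remark about $c=0$ correctly pinpoints why the induction must fail there. The only minor thing to flag: when you pass from $G_k(\alpha^2 u)=G_k(u)$ for $u$ of the form $z^2$ to a polynomial identity in $u$, you should say explicitly that $z\mapsto z^2$ is onto $\C$ (or that two polynomials agreeing on infinitely many points are equal); this is implicit in your write-up but worth a word.
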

\begin{proof}
	Fix $c\neq0$. Observe that $\phi_c$ is not conjugate to $z^2$.
	
	Clearly, $\{z,-z\}\subseteq\Sigma(\phi_c)\subseteq\Sigma_\infty(\phi_c)$. Let $\sigma\in\Sigma(\phi_c)$, and write $\sigma(z)=az+b$ with $a\in\C^\times$ and $b\in\C$ by \eqref{Siginfpoly}. The equality $\phi_c\circ\sigma=\phi_c$ implies $\sigma(z)=\pm z$. Hence, $\Sigma(\phi_c)=\{z,-z\}$.
	
	We now show by induction on $n\in\Z_{>0}$ that $\Sigma(\phi_c)=\Sigma(\phi_c^{\circ n})$ for all $n\geq1$, which implies $\Sigma(\phi_c)=\Sigma_\infty(\phi_c)$. The case $n=1$ is trivial. Assume $\Sigma(\phi_c)=\Sigma(\phi_c^{\circ (n-1)})$ for some $n\geq2$. Clearly, $\Sigma(\phi_c)\subseteq\Sigma(\phi_c^{\circ n})$. Let $\sigma\in\Sigma(\phi_c^{\circ n})$, and write $\sigma(z)=az+b$ with $a\in\C^\times$ and $b\in\C$ by \eqref{Siginfpoly}. By \cite[Lemma~3.4(ii)]{Pakdeg23} and the identity
	$$\phi_c\circ\phi_c^{\circ (n-1)}=\phi_c\circ(\phi_c^{\circ (n-1)}\circ\sigma),$$
	either $\phi_c^{\circ (n-1)}=\phi_c^{\circ (n-1)}\circ\sigma$ or $\phi_c^{\circ (n-1)}=-\phi_c^{\circ (n-1)}\circ\sigma$.
	
	If $\phi_c^{\circ (n-1)}=\phi_c^{\circ (n-1)}\circ\sigma$, then $\sigma\in\Sigma(\phi_c^{\circ (n-1)})=\Sigma(\phi_c)$, so $\Sigma(\phi_c^{\circ n})\subseteq\Sigma(\phi_c)$.
	
	Suppose $\phi_c^{\circ (n-1)}=-\phi_c^{\circ (n-1)}\circ\sigma$. Comparing the coefficients of $z^{2^{n-1}}$, $z^{2^{n-1}-1}$, and $z^{2^{n-1}-2}$ yields:
	\begin{itemize}
		\item $a^{2^{n-1}}=-1$;
		\item $b=0$ (since the term $z^{2^{n-1}-1}$ vanishes in the expansion of $\phi_c^{\circ (n-1)}$);
		\item $2^{n-2}c=-a^{2^{n-1}-2}2^{n-2}c$.
	\end{itemize}
	Since $c\neq0$, the third equality implies $a^{2^{n-1}-2}=-1$. Then
	$$-1=a^{2^{n-1}}=(a^2)^{2^{n-2}}=(a^{2^{n-1}}/a^{2^{n-1}-2})^{2^{n-2}}=(-1/-1)^{2^{n-2}}=1,$$
	which is a contradiction.
	
	Hence, $\Sigma(\phi_c^{\circ n})=\Sigma(\phi_c)$. By induction, we obtain $\Sigma(\phi_c)=\Sigma_\infty(\phi_c)$.
\end{proof}

The following two theorems show that for $2\leq d\leq3$, a general polynomial $F\in\C[z]$ of degree $d$ satisfies certain ``good'' properties.

\begin{Thm}\label{gendeg2}
	There exists a non-empty Zariski open subset $U$ of $\Poly^2(\C)$ such that for every $F\in U$:
	\begin{enumerate}[(1)]
		\item \label{gendeg2.1} $\Sigma(F)=\Sigma_\infty(F)$ is cyclic of order two;
		\item \label{gendeg2.2} if $G(z)\in\C(z)$ is a rational map of degree $2$ such that $G^{\circ k}=F^{\circ k}$ for some $k\in\Z_{>0}$, then $G=F$.
	\end{enumerate}
\end{Thm}
\begin{proof}
	Let $U:=\{a_1^2-2a_1\neq4a_0 a_2\}$, which is a non-empty Zariski open subset of
	$$\Poly^2(\C)=\{a_0z^2+a_1z+a_2\}\cong\{(a_0,a_1,a_2)\in\C^3\colon a_0\neq0\}.$$
	A simple computation shows that $a_0z^2+a_1z+a_2\in\Poly^2(\C)$ is conjugate to some $z^2+c$ with $c\in\C^\times$ if and only if its coefficients lie in $U$.
	
	Let $F(z)=a_0z^2+a_1z+a_2\in U$, and let $c=c(F)$ be the unique number in $\C^\times$ such that $F$ is conjugate to $\phi_c$. Choose $\tau\in\Aff(\C)$ with $\tau\circ F\circ\tau^{-1}=\phi_c$.
	
	\eqref{gendeg2.1} By Lemma~\ref{lemcyc2}, $\Sigma(\phi_c)=\Sigma_\infty(\phi_c)=\{z,-z\}$ is cyclic of order two, so $\Sigma(F)=\Sigma_\infty(F)$ is also cyclic of order two.
	
	\eqref{gendeg2.2} Let $G\in\C(z)$ be a rational map of degree $2$ with $G^{\circ k}=F^{\circ k}$ for some $k\in\Z_{>0}$. By the construction of $U$ and Lemma~\ref{Cinfpoly}, $G\in C(F^{\circ k})$ must be a polynomial (of degree $2$). Set $\tilde{G}=\tau\circ G\circ\tau^{-1}\in\C[z]$. Then $\tilde{G}^{\circ k}=\phi_c^{\circ k}$, and we conclude that $\tilde{G}$ (and $G$) is not conjugate to $z^2$. By \eqref{gendeg2.1} and Lemma~\ref{lemcyc2},
	$$\Sigma(\tilde{G})=\Sigma_\infty(\tilde{G})=\Sigma(\tilde{G}^{\circ k})=\Sigma(\phi_c^{\circ k})=\Sigma(\phi_c)=\{\pm z\}.$$
	By \cite[Lemma~3.5]{Pakdeg23}, $\tilde{G}=\nu\circ\phi_c$ for some $\nu\in\PGL_2(\C)$. In fact, $\nu\in\Aff(\C)$ because $\tilde{G}$ is a polynomial. From $\phi_c^{\circ k}=\tilde{G}^{\circ k}=(\nu\circ\phi_c)^{\circ k}$, we get $\phi_c^{\circ (k-1)}=(\nu\circ\phi_c)^{\circ (k-1)}\circ\nu$, so
	$$\phi_c^{\circ k}=\phi_c\circ\phi_c^{\circ (k-1)}=\phi_c\circ(\nu\circ\phi_c)^{\circ (k-1)}\circ\nu=(\phi_c\circ\nu)^{\circ k}.$$
	The same argument shows that $\Sigma(\phi_c\circ\nu)=\{\pm z\}$. On the other hand, by \eqref{Sigconj}, we have $\Sigma(\phi_c\circ\nu)=\nu^{-1}\circ\Sigma(\phi_c)\circ\nu=\{z,\nu^{-1}(-\nu(z))\}$. So $\nu^{-1}(-\nu(z))=-z$, which implies $\nu(-z)=-\nu(z)$. Thus, $\nu(z)=\la z$ for some $\la\in\C^\times$. Then
	\begin{equation}\label{compare1}
		(z^2+c)^{\circ k}=\phi_c^{\circ k}(z)=\tilde{G}^{\circ k}(z)=(\nu\circ\phi_c)^{\circ k}(z)=(\la z^2+\la c)^{\circ k}.
	\end{equation}
	Comparing the coefficient of $z^{2^k}$ on the two sides of \eqref{compare1} gives $\la^{2^k-1}=1$.
	If $k=1$, then $\la=1$.
	If $k\geq2$, the coefficient of $z^{2^k-4}$ gives
	$$2^{k-3}(2^k -2)c^2+2^{k-2}c=2^{k-3}(2^k -2)\la^{2^k-1}c^2+2^{k-2}\la^{2^k-3}c,$$
	which implies $\la^2=1$ (since $c\neq0$, $\la^{2^k-1}=1$, and $\la^{2^k-3}$ must equal 1). Then $\la=(\la^2)^{2^{k-1}}/\la^{2^k-1}=1$. In all cases, we obtain $\nu(z)=z$, so $\tilde{G}=\phi_c$ and $G=F$.
\end{proof}

\begin{Thm}\label{gendeg3}
There exists a non-empty Zariski open subset $U$ of $\Poly^3(\C)$ such that for every $F\in U$:
\begin{enumerate}[(1)]
	\item \label{gendeg3.1} $F(z)$ is pre-simple and $\Sigma(F)=\Sigma_\infty(F)=1$;
	\item \label{gendeg3.2} if $G(z)\in\C(z)$ is a rational map of degree $3$ such that $G^{\circ k}=F^{\circ k}$ for some $k\in\Z_{>0}$, then $G=F$.
\end{enumerate}
\end{Thm}
\begin{proof}
	By Lemmas~\ref{genpresim}~and~\ref{genG1}, there exists a non-empty Zariski open subset $U$ of $\Poly^3(\C)$ such that every $F\in U$ is pre-simple and $G(F)=1$. (In particular, every $F\in U$ is not conjugate to $z^3$.) Let $F\in U$. We prove \eqref{gendeg3.1} and \eqref{gendeg3.2}.
	
	\eqref{gendeg3.1} $F$ is pre-simple by assumption. Then $\Sigma(F)=\Sigma_\infty(F)=1$ by Corollary~\ref{presimSiginf1}.
	
	\eqref{gendeg3.2} Let $G\in\C(z)$ be a rational map of degree $3$ with $G^{\circ k}=F^{\circ k}$ for some $k\in\Z_{>0}$. The case $k=1$ is trivial. We assume $k\geq2$. By Lemma~\ref{Cinfpoly}, $G\in C(F^{\circ k})$ must be a polynomial. Applying Proposition~\ref{Fldecomp} to the complete decomposition $F^{\circ k}=G\circ\cdots\circ G$, we conclude that
	$$G=\nu^{-1}\circ F=\delta^{-1}\circ F\circ\nu$$
	for some $\nu,\delta\in\Aff(\C)$. (Here $\delta(z)=z$ when $k=2$.) Then $F\circ\nu=(\delta\circ\nu^{-1})\circ F$, so $\nu\in G(F)=1$. We conclude that $\nu(z)=z$ and $G=\nu^{-1}\circ F=F$.
\end{proof}

\begin{Lem}\label{gennotgenla}
	For every integer $d\geq2$, a general polynomial $F\in\C[z]$ of degree $d$ is not a generalized Latt\`es map.
\end{Lem}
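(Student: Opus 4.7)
The plan is to apply Proposition \ref{polygenla}, which characterizes the polynomials of degree $d \geq 2$ that are generalized Latt\`es maps as those which are either exceptional, or conjugate to $z^r R(z)^n$ for some integers $r \geq 1$ and $n \geq 2$ with $\gcd(r,n)=1$ and some polynomial $R \in \C[z]$. I will show that the locus of such polynomials in $\Poly^d(\C)$ is contained in a proper Zariski closed subset, so that its complement is a non-empty Zariski open subset on which the conclusion holds.

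For $d \geq 4$ the result is essentially immediate from earlier material: Lemma \ref{genpresim} produces a non-empty Zariski open subset of $\Poly^d(\C)$ consisting of pre-simple polynomials, and Lemma \ref{presimnongenla} asserts that any pre-simple polynomial of degree at least $4$ is not a generalized Latt\`es map. Intersecting yields the desired open subset for these degrees.

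For $d \in \{2,3\}$, I argue directly. The exceptional polynomials of degree $d$, namely $z^d$ and $\pm T_d(z)$, form finitely many $\Aff$-orbits of dimension at most $2$, hence are contained in a proper closed subset of $\Poly^d(\C)$. For the non-exceptional case, writing $f$ as $z^r R(z)^n$, I may assume $\deg R \geq 1$ (otherwise $r=d$ and $f$ is conjugate to $z^d$, already exceptional). The constraints $r + n\deg R = d$, $r,\deg R \geq 1$, and $n \geq 2$ leave only finitely many valid triples $(r,n,\deg R)$: for $d=2$ none exist, so the non-exceptional locus is empty; for $d=3$ only $(1,2,1)$ occurs, yielding the family $\mathcal{Q} := \{z(az+b)^2 : a \in \C^\times,\, b \in \C\} \subset \Poly^3(\C)$ of dimension $2$. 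Conjugation by $\sigma(z)=\la z$ sends $z(az+b)^2$ to $z(a\la z+b)^2$, which is a nontrivial $\C^\times$-action within $\Aff$ preserving $\mathcal{Q}$; this sharpens the naive bound to
$$\dim(\Aff \cdot \mathcal{Q}) \leq \dim \Aff + \dim \mathcal{Q} - 1 = 2+2-1 = 3 < 4 = \dim \Poly^3(\C).$$
Taking Zariski closures and complements finishes the argument in every degree.

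The main subtlety is the low-degree case $d=3$: the direct estimate $\dim \Aff + \dim \mathcal{Q}=4$ exactly matches $\dim \Poly^3(\C)$, so the strict inequality needed for a non-empty open complement is obtained only by exploiting the positive-dimensional scaling stabilizer of $\mathcal{Q}$ inside $\Aff$. The cases $d=2$ and $d \geq 4$ are comparatively mild, being handled respectively by a trivial non-existence check and by the pre-simple machinery already developed.
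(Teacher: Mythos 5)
Your proposal is correct and follows essentially the same approach as the paper: reduce via Proposition \ref{polygenla}, handle $d\geq4$ by pre-simplicity (Lemmas \ref{genpresim} and \ref{presimnongenla}), and handle $d\in\{2,3\}$ by a dimension count. The only variation is in the $d=3$ case: the paper writes out the conjugate of $z(az+b)^2$ by $\sigma(z)=ez+f$ explicitly and substitutes $\la=ae$, $\mu=af$ to exhibit a parametrization by three free parameters, whereas you obtain the same bound $\dim(\Aff\cdot\mathcal{Q})\leq 3$ more conceptually by noting that the scaling subgroup of $\Aff$ preserves $\mathcal{Q}$, forcing the fibers of the action map $\Aff\times\mathcal{Q}\to\Poly^3$ to have dimension at least one. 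The two arguments are equivalent — the paper's substitution is exactly an explicit manifestation of the stabilizer redundancy you identified.
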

\begin{proof}
For $d\geq4$, the result follows from Lemmas~\ref{presimnongenla}~and~\ref{genpresim}.

If $d=2$, then $F$ is a generalized Latt\`es map if and only if it is exceptional (see Proposition~\ref{polygenla}). Exceptional polynomials of degree $2$ are exactly those polynomials conjugate to $z^2$ or $T_2(z)=z^2-2$. Since $\MPoly^2(\C)\cong\A^1(\C)$ has dimension $1>0$, a general $F$ is not exceptional.

If $d=3$, then $F$ is exceptional if and only if it is (affinely) conjugate to $z^3$ or $\pm T_3(z)=\pm(z^3-3z)$. Thus, a general $F$ is not exceptional. By Proposition~\ref{polygenla}, a non-exceptional $F$ is a generalized Latt\`es map if and only if it is (affinely) conjugate to
$$z R_{a,b}(z)^2=z(az+b)^2$$
for some degree one polynomial $R_{a,b}(z)=az+b$ with $a,b\in\C^\times$. Thus, $F$ is a generalized Latt\`es map if and only if there exist $a,b,e\in\C^\times$ and $f\in\C$ such that
\begin{equation}\label{deg3genla}
	F(z)=(ez+f)^{-1}\circ R_{a,b}(z)\circ(ez+f)=\frac{(ez+f)(a(ez+f)+b)^2-f}{e}.
\end{equation}
Setting $\la=ae\in\C^\times$ and $\mu=af\in\C$, this becomes
\begin{equation}\label{deg3genla2}
	F(z)=\la^2z^3+\la(3\mu+2b)z^2+(\mu+b)(3\mu+b)z+\frac{\mu}{\la}(\mu^2+2\mu b+b^2-1).
\end{equation}
There are only $3$ parameters $\la,b\in\C^\times$ and $\mu\in\C$, while
$$\Poly^3(\C)=\{a_0z^3+a_1z^2+a_2z+a_3\colon(a_0,a_1,a_2,a_3)\in\C^\times\times\C^3\}$$
has dimension $4>3$. Hence, a general $F$ is not a generalized Latt\`es map.
\end{proof}

\begin{Thm}\label{deg23semiconj}
	Let $d\in\{2,3\}$. There exists a non-empty Zariski open subset $U$ of $\Poly^d(\C)$ such that for every $F\in U$, if $B,X\in\C(z)\setminus\C$ and $k\in\Z_{>0}$ satisfy $F^{\circ k}\circ X=X\circ B$, then there exist $m\in\Z_{\geq0}$ and $\mu\in\PGL_2(\C)$ such that $X=F^{\circ m}\circ\mu$ and $B=\mu^{-1}\circ F^{\circ k}\circ\mu$.
\end{Thm}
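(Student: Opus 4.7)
My plan is to adapt the proof of Theorem \ref{Thm22} from the pre-simple degree $\geq 4$ setting to $d \in \{2,3\}$, substituting Theorems \ref{gendeg2}, \ref{gendeg3} and Lemma \ref{gennotgenla} for the pre-simple machinery. I would take $U$ to be the intersection of the non-empty Zariski open subset furnished by Theorem \ref{gendeg2} (when $d=2$) or Theorem \ref{gendeg3} (when $d=3$) with the non-empty Zariski open subset of non-generalized-Latt\`es polynomials provided by Lemma \ref{gennotgenla}.

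Fix $F \in U$ with $F^{\circ k} \circ X = X \circ B$. Since exceptional polynomials are generalized Latt\`es maps by Proposition \ref{polygenla}(\ref{polygenla1}), $F$ is non-exceptional, hence in particular not of monomial type; Remark \ref{genlait} then yields that $F^{\circ k}$ is not a generalized Latt\`es map. Applying \cite[Theorem 4.1]{Paksimple} (exactly as in the proof of Theorem \ref{Thm22}) produces a non-constant $Y \in \C(z) \setminus \C$ and an integer $e \geq 1$ with $X \circ Y = F^{\circ ke}$.

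The central tool is a decomposition lemma valid for $F \in U$: every decomposition $F^{\circ l} = G_r \circ \cdots \circ G_1$ into rational maps of degree $\geq 2$ satisfies $\deg(G_i) = d^{s_i}$, $\sum_i s_i = l$, and there exist $\nu_1, \ldots, \nu_{r-1} \in \PGL_2(\C)$ (with $\nu_0 = \nu_r = \Id$) such that $G_i = \nu_i^{-1} \circ F^{\circ s_i} \circ \nu_{i-1}$. For $d=3$ this is precisely Corollary \ref{Corpresimdecom}; for $d=2$ I would derive it by refining each $G_i$ into its indecomposable factors, observing via Theorem \ref{gendeg2}(3) that each refined factor must have degree $2$ (so that each $G_i$ is in fact a composition of $\log_2 \deg(G_i)$ degree-$2$ factors), and then invoking the equivalence of complete indecomposable decompositions. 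Applied to the two-factor identity $X \circ Y = F^{\circ ke}$, this yields $X = F^{\circ m} \circ \mu$ with $m = \log_d \deg(X)$ and some $\mu \in \PGL_2(\C)$.

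Substituting back, one obtains $F^{\circ (k+m)} = F^{\circ m} \circ (\mu \circ B \circ \mu^{-1})$, and applying the decomposition lemma once more produces $\nu \in \PGL_2(\C)$ with $F^{\circ m} = F^{\circ m} \circ \nu$ and $\mu \circ B \circ \mu^{-1} = \nu^{-1} \circ F^{\circ k}$. The first equation places $\nu \in \Sigma(F^{\circ m}) \subseteq \Sigma_\infty(F) = \Sigma(F) \subseteq \Sigma(F^{\circ k})$, the middle equality being part of Theorem \ref{gendeg2}(1)/\ref{gendeg3}(1). Setting $\mu' := \nu \circ \mu$, direct computation using $F^{\circ m} \circ \nu = F^{\circ m}$ and $F^{\circ k} \circ \nu = F^{\circ k}$ gives $X = F^{\circ m} \circ \mu'$ and $B = (\mu')^{-1} \circ F^{\circ k} \circ \mu'$, as required. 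The hard part will be establishing the decomposition lemma for $d = 2$, since Corollary \ref{Corpresimdecom} requires degree $\geq 3$; once that is in hand, the remaining manipulations are routine, and the crucial use of $\Sigma(F) = \Sigma_\infty(F)$ is what allows the absorption of $\nu$ into $\mu$.
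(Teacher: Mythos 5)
Your proposal is correct and follows essentially the same route as the paper: take $U$ from Lemma \ref{gennotgenla} combined with Theorems \ref{gendeg2}/\ref{gendeg3}, lift to $X\circ Y=F^{\circ ke}$ via a result of Pakovich, apply (3) of Theorem \ref{gendeg2} or \ref{gendeg3} twice, and absorb $\nu$ into $\mu$ via $\Sigma_\infty(F)=\Sigma(F)$. The only noticeable divergence is a citation choice: the paper invokes Proposition 3.3 of Pak23 (\texttt{Pakinvvar}) rather than Theorem 4.1 of Pak25a (\texttt{Paksimple}) for the lifting step, and it applies (3) directly rather than packaging it as a separate decomposition lemma analogous to Corollary \ref{Corpresimdecom}.
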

\begin{proof}
By Lemma~\ref{gennotgenla} and Theorems~\ref{gendeg2}~and~\ref{gendeg3}, there exists a non-empty Zariski open subset $U$ of $\Poly^d(\C)$ such that every $F\in U$ is not a generalized Latt\`es map and satisfies $\Sigma(F)=\Sigma_\infty(F)$.

Let $F\in U$, and let $B,X\in\C(z)\setminus\C$ be such that $F^{\circ k}\circ X=X\circ B$ for some $k\in\Z_{>0}$. If $\deg(X)=1$, then the conclusion trivially holds with $(m,\mu)=(0,X)$. We may assume $\deg(X)\geq2$. By Remark~\ref{genlait}, $F^{\circ k}$ is not a generalized Latt\`es map. Applying \cite[Proposition~3.3]{Pakinvvar}, there exist $Y\in\C(z)\setminus\C$ and $r\in\Z_{>0}$ such that $X\circ Y=F^{\circ kr}$. By Corollary~\ref{Corpresimdecom}, $m:=\log_d\deg (X)$ is a positive integer and $X=F^{\circ m}\circ\delta$ for some $\delta\in\PGL_2(\C)$. Then
\begin{equation}\label{Fkm}
	F^{\circ (k+m)}=F^{\circ k}\circ F^{\circ m}=F^{\circ k}\circ X\circ\delta^{-1}=X\circ B\circ\delta^{-1}=F^{\circ m}\circ\delta\circ B\circ\delta^{-1}.
\end{equation}
Again by Corollary~\ref{Corpresimdecom}, there exists $\nu\in\PGL_2(\C)$ such that $\delta\circ B\circ\delta^{-1}=\nu^{-1}\circ F^{\circ k}$, hence $F^{\circ (k+m)}=F^{\circ m}\circ\nu^{-1}\circ F^{\circ k}$ and $F^{\circ m}=F^{\circ m}\circ\nu$. Then $\nu\in\Sigma_\infty(F)=\Sigma(F)$. Let $\mu:=\nu\circ\delta$. Then
$$X=F^{\circ m}\circ\delta=(F^{\circ m}\circ\nu)\circ\delta=F^{\circ m}\circ\mu,$$
and
$$B=\delta^{-1}\circ\nu^{-1}\circ F^{\circ k}\circ\delta=\mu^{-1}\circ(F^{\circ k}\circ\nu)\circ\delta=\mu^{-1}\circ F^{\circ k}\circ\mu.$$
\end{proof}

\begin{Thm}\label{deg23percurve}
	Let $d\in\{2,3\}$. There exists a non-empty Zariski open subset $U$ of $\Poly^d(\C)$ such that for all $(F_1,F_2)\in U\times U$ and every irreducible algebraic curve $C\subset\P^1\times\P^1$ over $\C$ that is neither a horizontal nor a vertical line, the following are equivalent:
	\begin{enumerate}[(i)]
		\item \label{deg23percurve1} $C$ is $(F_1,F_2)$-periodic;
		\item \label{deg23percurve2} there exist $s\in\Z_{\geq0}$ and a degree one polynomial $\alpha\in\Aff(\C)$ such that
		$$F_2=\alpha\circ F_1\circ\alpha^{-1}$$
		and $C$ is one of the graphs
		$$y=(\alpha\circ F_1^{\circ s})(x)\quad\text{or}\quad x=(F_1^{\circ s}\circ\alpha^{-1})(y),$$
		where $x$ and $y$ are the coordinates on $\P^1\times\P^1$.
	\end{enumerate}
	In particular, every $(F_1,F_2)$-periodic irreducible curve is $(F_1,F_2)$-invariant.
\end{Thm}
\begin{proof}
	By Lemma~\ref{gennotgenla} and Theorems~\ref{gendeg2},~\ref{gendeg3},~and~\ref{deg23semiconj}, there exists a non-empty Zariski open subset $U$ of $\Poly^d(\C)$ such that every $F\in U$ is not a generalized Latt\`es map, satisfies properties (1) and (2) of Theorem~\ref{gendeg2} (for $d=2$) or Theorem~\ref{gendeg3} (for $d=3$), and satisfies the conclusion of Theorem~\ref{deg23semiconj}.
	
	Let $(F_1,F_2)\in U^2$, and let $C\subset\P^1\times\P^1$ be an irreducible curve over $\C$ that is neither a horizontal nor a vertical line.
	
	Assume \eqref{deg23percurve1}: $C$ is $(F_1,F_2)$-periodic. Fix $k\in\Z_{>0}$ such that $C$ is $(F_1^{\circ k},F_2^{\circ k})$-invariant. By Remark~\ref{genlait}, neither $F_1^{\circ k}$ nor $F_2^{\circ k}$ is a generalized Latt\`es map. By \cite[Theorem~1.1]{Pakinvvar}, there exist $X_1,X_2,B\in\C(z)\setminus\C$ such that:
	\begin{itemize}
		\item $F_i^{\circ k}\circ X_i=X_i\circ B$ for $i=1,2$;
		\item $t\mapsto(X_1(t),X_2(t))$ parametrizes $C$.
	\end{itemize}
	
	By Theorem~\ref{deg23semiconj}, there exist $\alpha_1,\alpha_2\in\PGL_2(\C)$ and $m_1,m_2\in\Z_{\geq0}$ such that
	$$X_1=F_1^{\circ m_1}\circ\alpha_1,\quad X_2=F_2^{\circ m_2}\circ\alpha_2,\quad\alpha_1^{-1}\circ F_1^{\circ k}\circ\alpha_1=B=\alpha_2^{-1}\circ F_2^{\circ k}\circ\alpha_2.$$
	Let $\alpha=\alpha_2\circ\alpha_1^{-1}$. Then
	$$F_2^{\circ k}=\alpha\circ F_1^{\circ k}\circ\alpha^{-1}=(\alpha\circ F_1\circ\alpha^{-1})^{\circ k},$$
	so $F_2=\alpha\circ F_1\circ\alpha^{-1}$ by (2) of Theorem~\ref{gendeg2} or~\ref{gendeg3}. For $1\leq i\leq2$, since $F_i$ is not a generalized Latt\`es map (in particular, not conjugate to $z^d$), $\infty$ is the unique totally invariant fixed point of $F_i$. Hence, the M\"obius transformation $\alpha$ must fix $\infty$, so $\alpha\in\Aff(\C)$.
	
	The curve $C$ is also parametrized by $t\mapsto(F_1^{\circ m_1}(t),\alpha\circ F_1^{\circ m_2}(t))$. If $m_1\geq m_2$, then $t\mapsto(F_1^{\circ s}\circ\alpha^{-1}(t),t)$ parametrizes $C$, where $s:=m_1-m_2\in\Z_{\geq0}$. If $m_1< m_2$, then $t\mapsto(t,\alpha\circ F_1^{\circ s}(t))$ parametrizes $C$, where $s:=m_2-m_1\in\Z_{>0}$. Thus, $C$ is one of the graphs in \eqref{deg23percurve2}.
	
	Conversely, if \eqref{deg23percurve2} holds, then $C$ is clearly $(F_1,F_2)$-invariant.
\end{proof}
\begin{Rem}
	Let $d\in\{2,3\}$. Compared to Theorem~\ref{2presimper}, the form of periodic curves in Theorem~\ref{deg23percurve} is relatively simple. This is mainly due to (2) of Theorem~\ref{gendeg2} or~\ref{gendeg3}, which implies:
	\begin{itemize}
		\item for two general polynomials $F_1,F_2\in\Poly^d(\C)$, $F_1$ and $F_2$ are conjugate if and only if $F_1^{\circ k}$ and $F_2^{\circ k}$ are conjugate for some $k\in\Z_{>0}$;
		\item for a general polynomial $F\in\Poly^d(\C)$, $\Aut(F)=\Aut_\infty(F)$.
	\end{itemize}
\end{Rem}

\begin{Prop}[=Proposition~\ref{poly33deg23intro}]\label{poly33deg23}
	Let $d\in\{2,3\}$. There exists a non-empty Zariski open subset $U$ of $\Poly^d(\C)$ such that for all $(f,g)\in U\times U$, $f$ and $g$ are intertwined if and only if $[f]=[g]$ in $\MPoly^d(\C)$.
\end{Prop}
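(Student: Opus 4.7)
The plan is to take $U$ to be the non-empty Zariski open subset of $\Poly^d(\C)$ supplied by Theorem \ref{deg23percurve}, and reduce the intertwining hypothesis to the existence of a periodic irreducible curve in $(\P^1)^2$ to which that theorem applies.

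First, given $(f,g)\in U\times U$ intertwined, I would extract from the definition an invariant algebraic curve $Z\subset(\A^1)^2$ with both projections onto. Decomposing $Z=Z_1\cup\cdots\cup Z_k$ into irreducible components, the finite morphism $(f,g):\A^2\to\A^2$ maps each $Z_i$ onto some $Z_j$. Each component is of one of three types: a horizontal line $\A^1\times\{b\}$, a vertical line $\{a\}\times\A^1$, or a curve with both projections surjective onto $\A^1$. The map $(f,g)$ preserves these types: horizontal lines go to horizontal lines (governed by $g$ acting on the $b$-coordinate), vertical lines go to vertical lines (governed by $f$ acting on the $a$-coordinate), and type-generic components go to type-generic components since $(f,g)$ is finite. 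Since projections of $Z$ to both axes are onto in the non-trivial sense required by intertwining, some irreducible component $Z_0$ must have both projections surjective. By pigeonhole on the finite set of type-generic components, there exists $k\in\Z_{>0}$ with $(f,g)^{\circ k}(Z_0)=Z_0$, so $Z_0$ is $(f,g)$-periodic.

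Next, take the projective closure $\bar Z_0\subset(\P^1)^2$, which is an irreducible $(f,g)$-periodic curve that is neither a vertical nor a horizontal line (since both projections of $Z_0$ are surjective onto $\A^1$). Since $f,g\in U$, Theorem \ref{deg23percurve} applies and yields $\alpha\in\PGL_2(\C)$ and $s\in\Z_{\geq0}$ such that $g=\alpha\circ f\circ\alpha^{-1}$. Because $f$ and $g$ are polynomials of degree $d\geq2$, each has $\infty\in\P^1$ as its unique totally ramified fixed point, and conjugation by $\alpha$ must send one to the other; hence $\alpha(\infty)=\infty$, so $\alpha\in\Aff(\C)$ is a linear polynomial. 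This gives $[f]=[g]$ in $\MPoly^d(\C)$, as desired.

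The main obstacle I anticipate is the bookkeeping around the reducible case: one must ensure that the intertwining hypothesis genuinely produces a component $Z_0$ with both projections surjective rather than merely a union of horizontal and vertical lines (the latter would correspond only to the trivial information that $f$ and $g$ have periodic points in $\C$, which is automatic and does not relate $f$ to $g$). This amounts to reading the surjectivity of projections in the sense used in \cite{FG22}, namely that at least one irreducible component of $Z$ must be of type-generic; once that is in hand, the rest of the argument is a straightforward application of Theorem \ref{deg23percurve}, whose proof already did the serious work via the classification of periodic curves under split polynomial endomorphisms in degrees $2$ and $3$.
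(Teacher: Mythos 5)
Your proposal is correct and takes essentially the same approach as the paper, whose proof simply reads ``Take $U$ as in Theorem~\ref{deg23percurve}. The result follows directly.'' You have merely unpacked the implicit step: extracting from the intertwining hypothesis an irreducible $(f,g)$-periodic curve in $\P^1\times\P^1$ that is neither horizontal nor vertical, noting that any type-generic component of $Z$ is eventually periodic under $(f,g)$, and then observing that the conjugating $\alpha$ produced by Theorem~\ref{deg23percurve} must fix $\infty$ and hence lies in $\Aff(\C)$ (a point the paper already records when introducing $\MPoly^d$).
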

\begin{proof}
Take $U$ as in Theorem~\ref{deg23percurve}. The result follows directly.
\end{proof}

\subsection{Proof of Theorem~\ref{polygeninj}}\label{S2.3}
Using Theorem~\ref{poly33} or Proposition~\ref{poly33deg23} in place of \cite[Theorems~3.51~and~3.52]{FG22}, we obtain a proof of Theorem~\ref{polygeninj} following Ji and Xie \cite{JX23}.
\begin{proof}[Proof of Theorem~\ref{polygeninj}]
	Let $d\geq2$ be an integer. Suppose, for the sake of contradiction, that $\tilde{\tau}_d$ is not generically injective. By Theorem~\ref{poly33} or Proposition~\ref{poly33deg23}, there exists a non-empty Zariski open subset $U$ of $\MPoly^d(\C)$ such that for all $f,g\in\Poly^d(\C)$ with $[f],[g]\in U$, 
	\begin{equation}\label{Uprop}
		f\text{ and }g\text{ are intertwined if and only if }[f]=[g].
	\end{equation}
	Let $W$ be a non-empty Zariski open subset of the Zariski closure of $\tilde{\tau}_d(U)$ such that $\tilde{\tau}_d^{-1}(W)\subseteq U$ and $\tilde{\tau}_d:\tilde{\tau}_d^{-1}(W)\to W$ is a finite \'etale morphism of degree at least $2$. After shrinking $U$, we may assume $U=\tilde{\tau}_d^{-1}(W)$.
	
	It is well-known that the PCF locus $\{f\in\Poly^d(\C)\colon f\text{ is PCF}\}$ is Zariski-dense in $\Poly^d(\C)$. As in the proof of \cite[Theorem~1.3]{JX23}, we can find two non-isotrivial algebraic families $h_1,h_2$ of degree $d$ polynomials parametrized by an irreducible affine curve $C$ with $h_i(C(\C))\subseteq U$ for $1\leq i\leq2$ and
	$$\tilde{\tau}_d\circ h_1=\tilde{\tau}_d\circ h_2\colon C\to\A_\C^N$$
	(where $N=N_{d,1}+\cdots+N_{d,m_d}\in\Z_{>0}$) such that
	\begin{equation}\label{noconj}
		\text{for every }t\in C(\C)\text{, }h_1(t)\neq h_2(t)\text{ in }\MPoly^d\text{,}
	\end{equation}
	and the PCF locus
	$$\{t\in C(\C)\colon h_1(t)\text{ is PCF}\}$$
	of $h_1$ is Zariski-dense in $C$.
	
	By \cite[Theorem~1.14]{JXZ}, the property of being PCF is determined by the length spectrum (in particular, by the multiplier spectrum). So
	$$\{t\in C(\C)\colon h_2(t)\text{ is PCF}\}=\{t\in C(\C)\colon h_1(t)\text{ is PCF}\},$$
	which is Zariski-dense in $C$. By the DAO-type theorem \cite[Theorem~1.5]{JX23} proved by Ji and Xie, we deduce that
	$$\#\{t\in C(\C)\colon h_1(t)\text{ and }h_2(t)\text{ are intertwined}\}=\infty.$$
	Note that polynomials of degree $\geq2$ cannot be Latt\`es, since the Julia set of any Latt\`es map is the entire Riemann sphere \cite[\S~2]{milnor2006lattes}. By \eqref{Uprop}, we obtain
	$$\#\{t\in C(\C)\colon h_1(t)=h_2(t)\text{ in }\MPoly^d\}=\infty,$$
	contradicting \eqref{noconj}.
\end{proof}

\subsection{General polynomial pairs and the {Z}ariski-dense orbit conjecture}\label{secZDO}
As a byproduct, we present a proof of the Zariski-dense orbit conjecture for a general split polynomial endomorphism on $\A^2$ with all factors of the same degree $d\geq2$, which is essentially due to Pakovich \cite{Paksimple,Pakdeg23}.
\begin{Conj}[Zariski-dense orbit conjecture~=~ZDO]\label{ZDO}
	Let $k$ be an algebraically closed field of characteristic zero. Let $X$ be an irreducible quasi-projective variety over $k$ and $f$ a dominant rational self-map on $X$. If $\{g\in k(X)\colon g\circ f=g\}=k$, where $k(X)$ is the function field of $X$, then there exists $x\in X(k)$ whose forward orbit under $f$ is well-defined and Zariski-dense in $X$.
\end{Conj}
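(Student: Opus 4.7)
The statement as displayed is the full Zariski--dense orbit conjecture, which remains a major open problem in arithmetic dynamics (Amerik--Campana, Bell--Rogalski--Sierra, Medvedev--Scanlon, Xie, among others), and I cannot honestly sketch a proof at this level of generality. The section title and the preceding sentence make clear that the paper in fact establishes ZDO only for a particular split polynomial endomorphism $(F_1,F_2):(\P^1)^2\to(\P^1)^2$ whose factors are general polynomials of the same degree $d\geq 2$, and my proposal addresses exactly that case.

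The plan is to choose $(F_1,F_2)\in U\times U$, where $U\subset\Poly^d(\C)$ is the Zariski open subset produced by Theorem \ref{poly33} (for $d\geq 4$) or Theorem \ref{deg23percurve} (for $d\in\{2,3\}$), further restricted so that $F_1$ and $F_2$ are not linearly conjugate and neither is exceptional. The hypothesis $\C((\P^1)^2)^{(F_1,F_2)}=\C$ is equivalent, by a standard argument, to the absence of positive--dimensional proper $(F_1,F_2)$--invariant irreducible subvarieties of $(\P^1)^2$ other than finitely many horizontal or vertical lines arising from fixed--point data of the $F_i$. By Theorem \ref{2presimper} (resp. Theorem \ref{deg23percurve}), every $(F_1,F_2)$--periodic irreducible curve $C\subset(\P^1)^2$ that is neither horizontal nor vertical must take the form $y=\alpha\circ F_1^{\circ s}(x)$ or $x=F_1^{\circ s}\circ\alpha^{-1}(y)$ with $F_2=\alpha\circ F_1\circ\alpha^{-1}$ for some degree--one $\alpha$, contradicting our non--conjugacy assumption; and the one--variable theory rules out nontrivial horizontal/vertical invariants for a general $F_i$.

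To exhibit a point with Zariski--dense orbit, I would invoke the standard cardinality argument in characteristic $0$: the $(F_1,F_2)$--preperiodic proper Zariski closed subvarieties of $(\P^1)^2$ form a countable family, and by the classification above each such subvariety is either a horizontal/vertical line or a finite union of periodic points. Their union is a countable union of proper Zariski closed subsets of $(\P^1)^2(\C)$, so its complement is non--empty; any point $x$ in the complement has a forward orbit whose Zariski closure is $(F_1,F_2)$--invariant and meets no proper invariant subvariety, hence equals $(\P^1)^2$.

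The main obstacle is the classification of $(F_1,F_2)$--periodic curves, namely Theorems \ref{2presimper} and \ref{deg23percurve}, which rely on Pakovich's orbifold and decomposition machinery developed earlier in the paper. Once this classification is in hand, reducing ZDO to the non--existence of non--trivial invariant curves and concluding via the countability argument are routine.
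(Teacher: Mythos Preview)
Your identification of the scope is correct: the displayed statement is the full ZDO conjecture, which the paper does not prove; what is proved is Theorem~\ref{presimapp}, and your proposal for that special case is essentially the paper's argument. Two points deserve comment.

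First, a small gap for $d\geq4$. You invoke Theorem~\ref{2presimper} and assert it yields $F_2=\alpha\circ F_1\circ\alpha^{-1}$, but that theorem only gives $F_2^{\circ k}=\alpha\circ F_1^{\circ k}\circ\alpha^{-1}$ for the period $k$ of the curve (and the curve description involves an auxiliary $\nu\in\Aut(F_1^{\circ k})$, which you drop). The paper closes this gap via the measure of maximal entropy: with $G(F_2)=1$, Theorem~\ref{presimrel} gives $E(F_2)=\langle F_2\rangle$; since $\mu_{\alpha\circ F_1\circ\alpha^{-1}}=\mu_{F_2^{\circ k}}=\mu_{F_2}$, one gets $\alpha\circ F_1\circ\alpha^{-1}\in\langle F_2\rangle$, hence $=F_2$ by degree. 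You could equally well bypass this by citing Theorem~\ref{poly33} directly (intertwined pre-simple maps in $U$ are conjugate), which already packages this step; indeed you reference Theorem~\ref{poly33} for the choice of $U$, so just use its conclusion rather than re-running Theorem~\ref{2presimper}.

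Second, your countability argument to produce \emph{some} point with dense orbit is correct but weaker than what the paper states. Theorem~\ref{presimapp} asserts that \emph{every} $(x,y)$ with $x\notin\PrePer(F_1)$ and $y\notin\PrePer(F_2)$ has Zariski-dense orbit. This follows directly once non-trivial periodic curves are excluded: the Zariski closure of the orbit is $(F_1,F_2)$-invariant; if proper, its irreducible components are periodic curves, hence horizontal or vertical lines; but the projections of the orbit to each factor are infinite (as $x,y$ are non-preperiodic), so finitely many such lines cannot contain it. Your detour through the ZDO hypothesis $\C((\P^1)^2)^{(F_1,F_2)}=\C$ and the reduction to ``absence of invariant subvarieties'' is neither needed nor quite accurate (invariant rational functions correspond to invariant fibrations, not to invariant subvarieties in general).
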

We remark that the converse of ZDO holds and is easy to prove.
\begin{Thm}\label{presimapp}
	Fix an integer $d\geq2$. For a general pair of polynomials $(F_1,F_2)\in\Poly^d(\C)^2$, the endomorphism $(F_1,F_2):\A^2\to\A^2$ has no irreducible periodic curves other than vertical or horizontal lines. In particular, for all $x\in\A^1(\C)\setminus\PrePer(F_1)$ and $y\in\A^1(\C)\setminus\PrePer(F_2)$, the forward orbit $O_{(F_1,F_2)}((x,y))$ is Zariski-dense in $\A^2$.
\end{Thm}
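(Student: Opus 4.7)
The plan is to combine the classification of periodic irreducible curves for the product endomorphism $(F_1, F_2)$ provided by Theorem \ref{2presimper} (for $d \geq 4$) and Theorem \ref{deg23percurve} (for $d \in \{2, 3\}$) with a rigidity statement to the effect that, for a general $F$ of degree $d$, the iterate $F^{\circ k}$ determines $F$. Concretely, I will exhibit a non-empty Zariski open subset $U \subseteq \MPoly^d(\C)$ such that every $F \in U$ enjoys the following property: whenever $G \in \C(z)$ is a rational map of degree $d$ with $G^{\circ k} = F^{\circ k}$ for some $k \in \Z_{>0}$, one has $G = F$. For $d = 2$ and $d = 3$ this is Theorem \ref{gendeg2}(2) and Theorem \ref{gendeg3}(2) respectively. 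For $d \geq 4$, I will take $U$ to be the intersection of the pre-simple locus (Lemma \ref{genpresim}) with the locus where $G(F) = 1$ (Lemma \ref{genG1}); the argument then runs as follows. The length uniqueness of complete decompositions (used in the proof of Theorem \ref{Fldecomp}) forces $G$ to be indecomposable, for otherwise $G \circ \cdots \circ G$ would be a decomposition of $F^{\circ k}$ into indecomposables of length exceeding $k$, contradicting the canonical decomposition $F^{\circ k} = F \circ \cdots \circ F$. Theorem \ref{Fldecomp} then produces M\"obius transformations $\nu_1, \ldots, \nu_{k-1}$ with $G = F \circ \nu_{k-1}$ and $G = \nu_1^{-1} \circ F$; combining these yields $F \circ \nu_{k-1} = \nu_1^{-1} \circ F$, placing $\nu_{k-1}$ in $G(F) = 1$, whence $\nu_{k-1} = \Id$ and $G = F$.

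With such a $U$ in hand, set $V := \{([F_1], [F_2]) \in U \times U : [F_1] \neq [F_2]\}$, a Zariski open dense subset of $\MPoly^d(\C)^2$. The central claim is that for every $(F_1, F_2)$ with $([F_1], [F_2]) \in V$, the endomorphism $(F_1, F_2): \P^1_\C \times \P^1_\C \to \P^1_\C \times \P^1_\C$ admits no irreducible periodic curve besides horizontal and vertical lines. Indeed, any such curve $C$ would, by Theorem \ref{2presimper} or Theorem \ref{deg23percurve}, yield $k \in \Z_{>0}$ and $\alpha \in \PGL_2(\C)$ with $F_2^{\circ k} = \alpha \circ F_1^{\circ k} \circ \alpha^{-1}$. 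Setting $G := \alpha \circ F_1 \circ \alpha^{-1}$, we obtain a rational map of degree $d$ with $G^{\circ k} = F_2^{\circ k}$; the rigidity property applied to $F_2$ then forces $G = F_2$, so that $F_2 = \alpha \circ F_1 \circ \alpha^{-1}$ and $[F_1] = [F_2]$, contradicting $([F_1], [F_2]) \in V$.

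For the ``in particular'' statement, fix such a pair $(F_1, F_2)$ together with $x \in \P^1(\C) \setminus \PrePer(F_1)$ and $y \in \P^1(\C) \setminus \PrePer(F_2)$, and let $Y$ denote the Zariski closure of the forward orbit $\sO_{(F_1, F_2)}((x, y))$. Since $(F_1, F_2): Y \to Y$ is surjective (its image is closed and contains the dense orbit), it permutes the finitely many irreducible components of $Y$, each of which is therefore $(F_1, F_2)$-periodic. If $\dim Y = 0$ the orbit would be finite, forcing $x \in \PrePer(F_1)$, a contradiction. If $\dim Y = 1$, each component is a horizontal or vertical line by the central claim; but then the infinite orbit would visit horizontal lines infinitely often (forcing $y \in \PrePer(F_2)$) or vertical lines infinitely often (forcing $x \in \PrePer(F_1)$), again a contradiction. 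Hence $\dim Y = 2$ and $Y = \P^1_\C \times \P^1_\C$.

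I expect the main obstacle to be the rigidity statement ``$G^{\circ k} = F^{\circ k}$ implies $G = F$'' for general $F$ of degree $d \geq 4$. While both Theorem \ref{Fldecomp} and the triviality of $G(F)$ are already established in the preceding subsections, extracting $G = F$ requires a careful combinatorial tracking of the M\"obius correctors $\nu_j$ produced by the decomposition equivalence, and crucially uses that the open set $U$ simultaneously enforces pre-simplicity (to get indecomposability and apply Theorem \ref{Fldecomp}) and $G(F) = 1$ (to kill the correctors).
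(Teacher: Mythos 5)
Your proof is correct, and the final deduction (orbit visits a finite union of lines infinitely often, forcing preperiodicity) is the right elaboration of the ``in particular'' clause, which the paper leaves implicit. The key difference is in how you pass from $F_2^{\circ k}=\alpha\circ F_1^{\circ k}\circ\alpha^{-1}$ to $F_2=\alpha\circ F_1\circ\alpha^{-1}$ when $d\geq4$. You establish, as a new lemma, the rigidity property ``$G^{\circ k}=F^{\circ k}\Rightarrow G=F$'' for general $F$ of degree $d\geq4$, by the same mechanism already used in the paper's proof of Theorem~\ref{gendeg3}(2) for degree~$3$: Theorem~\ref{Fldecomp} forces $G$ to be indecomposable (a complete decomposition of $F^{\circ k}$ has length exactly $k$) and yields M\"obius correctors $\nu_j$, and combining $G=F\circ\nu_{k-1}$ with $G=\nu_1^{-1}\circ F$ places $\nu_{k-1}$ in $G(F)=1$. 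The paper's own proof instead invokes the measure-theoretic equality $E(F_2)=\langle F_2\rangle$ from Theorem~\ref{presimrel}, showing $\alpha\circ F_1\circ\alpha^{-1}\in E(F_2)$ via the equality of maximal-entropy measures $\mu_{\alpha\circ F_1\circ\alpha^{-1}}=\mu_{F_2}$, and then comparing degrees. Your route is more combinatorial and avoids the appeal to Levin's rigidity theorem underlying Theorem~\ref{presimrel}, while the paper's route is slightly shorter given that $E(F_2)=\langle F_2\rangle$ was already established; both depend on the same two genericity inputs, Lemmas~\ref{genpresim} and~\ref{genG1}. One small point worth tightening: when you assert that a surjective $(F_1,F_2)\colon Y\to Y$ permutes the irreducible components of $Y$, this is cleanest if you first reduce to the top-dimensional components (a surjective endomorphism need not permute lower-dimensional ones), or simply argue directly by pigeonhole that some single line in a $1$-dimensional $Y$ would absorb infinitely many orbit points, which already yields the desired preperiodicity contradiction.
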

\begin{proof}
Observe that for a general pair of polynomials $(F_1,F_2)\in\Poly^d(\C)^2$, we have $[F_1]\neq[F_2]$ in $\MPoly^d$. For $d\in\{2,3\}$, the result follows from Theorem~\ref{deg23percurve}.

Assume $d\geq4$. Let $F_1,F_2$ be pre-simple polynomials of degree $d$ such that
$$G(F_1)=G(F_2)=1\quad\text{and}\quad[F_1]\neq[F_2].$$
By Lemmas~\ref{genpresim}~and~\ref{genG1}, such pairs are general in $\Poly^d(\C)\times\Poly^d(\C)$.

Suppose that $C\subset\A^2$ is an irreducible $(F_1,F_2)$-periodic curve over $\C$ that is neither a horizontal nor a vertical line. By Theorem~\ref{2presimper}, $F_2^{\circ k}=\alpha\circ F_1^{\circ k}\circ\alpha^{-1}$ for some $k\in\Z_{>0}$ and $\alpha\in\Aff(\C)$. By Theorem~\ref{presimrel} and the fact that $G_0(F_2)\subseteq G(F_2)=1$, we have $E(F_2)=\left\langle F_2\right\rangle$. Since
$$\mu_{\alpha\circ F_1\circ\alpha^{-1}}=\mu_{(\alpha\circ F_1\circ\alpha^{-1})^{\circ k}}=\mu_{F_2^{\circ k}}=\mu_{F_2},$$
we see that $\alpha\circ F_1\circ\alpha^{-1}\in E(F_2)=\left\langle F_2\right\rangle$. Comparing degrees gives $\alpha\circ F_1\circ\alpha^{-1}=F_2$, contradicting $[F_1]\neq[F_2]$.
\end{proof}
\begin{Rem}
	Let $F_i\in\C[z]$ be a polynomial of degree $d_i\geq2$ for $1\leq i\leq2$ such that $d_1\neq d_2$. In the non-equal-degree case, $F_1$ and $F_2$ are not intertwined (see Remark~\ref{interwdef}), so the endomorphism $(F_1,F_2):\A^2\to\A^2$ has no irreducible periodic curves other than vertical or horizontal lines.
\end{Rem}
\begin{Rem}
	Note that for any polynomial $F(z)\in k[z]$ of degree $d\geq2$ over a field $k$ of characteristic zero, we have $\A^1(k)\setminus\PrePer(F_1)\neq\emptyset$. It is easy to see that for a general pair $(F_1,F_2)$ of polynomials of degree $d\geq2$ defined over $\overline{\Q}$, the analogous conclusion holds over $\overline{\Q}$. 
\end{Rem}
\begin{Rem}
	Xie \cite{Xie2017} proved ZDO for all dominant polynomial endomorphisms $f:\A_k^2\to\A_k^2$ over an algebraically closed field $k$ of characteristic zero using valuative techniques.
\end{Rem}

\section{Isospectral polynomials}\label{S3}
\subsection{Marked critical points and a theorem of Favre-Gauthier}\label{S3.1}
On the parameter space $\Poly^d$ (or the moduli space $\MPoly^d$), the critical points are not given by global morphisms. This causes difficulties when working with information about critical points (e.g., PCF maps and persistently preperiodic critical points). To address this, it is necessary to consider critically marked polynomials via a base change of the parameter space (or the moduli space). In this subsection, we review classical constructions of critically marked polynomials; see \cite[\S~2.1]{FG22} and \cite[\S~5]{DF08}.

A \emph{critically marked polynomial} of degree $d\geq2$ is a tuple $(P,c_0,\dots,c_{d-2})$ where $P\in\Poly^d$ and $c_0,\dots,c_{d-2}$ are the critical points (other than $\infty$) of $P$, counted with multiplicity. The space of critically marked polynomials of degree $d\geq2$ is the closed subvariety $\Pcrit^d$ of $\Poly^d\times\A^{d-1}$ given by
$$\left\lbrace(P=\sum_{i=0}^d a_iz^i,c_0,c_1,\dots,c_{d-2})\in\Poly^d\times\A^{d-1}\colon\sum_{j=1}^d ja_j z^{j-1}=d a_d\prod_{k=0}^{d-2}(z-c_k)\right\rbrace.$$
Let $\MPcrit^d$ denote the quotient $\Pcrit^d/\Aff$, where the action is given by
$$\sigma\cdot\left(P,c_0,\dots,c_{d-2}\right)=\left(\sigma\circ P\circ\sigma^{-1},\sigma(c_0),\dots,\sigma(c_{d-2})\right),$$
for $\sigma\in\Aff$ and $\left(P,c_0,\dots,c_{d-2}\right)\in\Pcrit^d$. There is a canonical forgetful morphism $\MPcrit^d\to\MPoly^d$. 

Define the morphism
$$\eta:\A^{d-1}\to\MPcrit^d,(c,a)=(c_1,\dots,c_{d-2},a)\mapsto\left[\left(P_{c,a}(z),0,c_1,\dots,c_{d-2}\right)\right],$$
where
$$P_{c,a}(z)=\frac{1}{d}z^d+\sum_{j=2}^{d-1}(-1)^{d-j}\sigma_{d-j}(c)\frac{z^j}{j}+a^d$$
and $\sigma_j(c)$ is the $j$-th elementary symmetric polynomial in $(c_1,\dots,c_{d-2})$ for $1\leq j\leq d-2$. Note that the (finite) critical points of $P_{c,a}$ are exactly $c_0:=0,c_1,\dots,c_{d-2}$. The morphism $\eta:\A^{d-1}\to\MPcrit^d$ is defined over $\Q$ and is a finite ramified covering of degree $d(d-1)$ (see \cite[Proposition~5.1]{DF08}). Let $\eta_0:\A^{d-1}\to\Poly^d$ be the morphism defined by $\eta_0(c,a)=P_{c,a}$.

The following theorem is a part of \cite[Theorem~B]{FG22} and is essential for the proof of Theorem~\ref{polynoninj}.
\begin{Thm}[Favre and Gauthier]\label{FG}
	Let $(P,a)$ and $(Q,b)$ be non-exceptional active dynamical pairs parametrized by an irreducible algebraic curve $C$, of respective degrees $d,\delta\geq2$, defined over a number field $K$. Then the following conditions are equivalent:
	\begin{enumerate}[(1)]
		\item \label{FG2} The set $\{t\in C(\overline{\Q})\colon a(t)\in\PrePer(P(t))\text{ and }b(t)\in\PrePer(Q(t))\}$ is infinite;
		\item There exist integers $N,M\geq1$ and families $R,\pi,\tau$ of polynomials of degree $\geq1$ parametrized by $C$ such that
		$$\tau\circ P^{\circ N}=R\circ\tau\quad\text{and}\quad\pi\circ Q^{\circ M}=R\circ\pi.$$
	\end{enumerate}
\end{Thm}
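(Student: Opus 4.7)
The plan is to prove the two implications separately. The direction (2) $\Rightarrow$ (1) is the soft one. Assume the semi-conjugacies $\tau\circ P^{\circ N}=R\circ\tau$ and $\pi\circ Q^{\circ M}=R\circ\pi$ with the matching condition $\tau(P^{\circ N}(a))=\pi(Q^{\circ M}(b))=:\alpha$. Then $a(t)\in\PrePer(P(t))$ iff $\tau_t(P_t^{\circ N}(a(t)))\in\PrePer(R(t))$ iff $\pi_t(Q_t^{\circ M}(b(t)))\in\PrePer(R(t))$ iff $b(t)\in\PrePer(Q(t))$. Since $(P,a)$ is active, the marked pair $(R,\alpha)$ is itself active (activity propagates through polynomial semi-conjugacies, as the bifurcation current is pushed forward nontrivially). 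For any active dynamical pair over a curve, a Montel/normal-families argument (originally due to Dujardin--Favre in the polynomial setting) produces infinitely many parameters $t$ where the marked point is preperiodic, which gives (1).

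For the hard direction (1) $\Rightarrow$ (2), my plan is to go through arithmetic equidistribution. A parameter $t\in C(\overline K)$ at which $a(t)\in\PrePer(P(t))$ has vanishing Call--Silverman canonical height $\widehat h_{P,t}(a(t))=0$, and similarly for $Q$. Hypothesis (1) produces an infinite set of such $t$, hence (since $C$ is a curve) a non-repeating sequence of Galois-small points for the adelic line bundle on $C$ built from the $(P,a)$-family, and simultaneously for the $(Q,b)$-family. Applying Yuan's arithmetic equidistribution theorem (together with its non-archimedean analogues of Baker--Rumely, Favre--Rivera-Letelier, Chambert-Loir) at every place $v$ of $K$, I conclude that the bifurcation measures $\mu_{P,a,v}$ and $\mu_{Q,b,v}$ agree up to a positive constant on $C_v^{\mathrm{an}}$. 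Equivalently, the fiberwise escape-rate functions $t\mapsto G_{P_t,v}(a(t))$ and $t\mapsto G_{Q_t,v}(b(t))$ are proportional modulo a pluriharmonic correction.

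The next step is to upgrade this measure-coincidence to an algebraic relation. Following the DeMarco--Mavraki/Ghioca--Hsia--Tucker circle of ideas, the equality of Green functions over every fiber forces, at sufficiently many fibers, an algebraic dependence between the Julia sets of $P_t$ and $Q_t$ that is witnessed by a common compositional factor of some iterate. After a finite base change of $C$, Ritt's theorems (in the form refined by Pakovich for polynomials, as assembled in \S\ref{secpresimple}) and Medvedev--Scanlon's classification of invariant subvarieties for split polynomial endomorphisms of $(\A^1)^2$ produce the integers $N,M$ and the families of polynomials $R,\tau,\pi$ satisfying $\tau\circ P^{\circ N}=R\circ\tau$ and $\pi\circ Q^{\circ M}=R\circ\pi$. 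The normalization $\tau(P^{\circ N}(a))=\pi(Q^{\circ M}(b))$ is then read off by tracking a single preperiodic parameter $t_0$ through the semi-conjugacies and spreading out algebraically.

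The main obstacle is the third step: translating the analytic/measure-theoretic coincidence from equidistribution into the explicit algebraic semi-conjugacy data $(R,\tau,\pi)$. For general rational maps this is quite delicate and would require the full strength of Pakovich's theory of generalized Latt\`es maps recalled in \S\ref{SPak}; however, in the polynomial setting the totally ramified fixed point at $\infty$ gives us a distinguished compositional factorization, so Ritt's decomposition theorem together with Medvedev--Scanlon makes this step tractable. The only place where the non-exceptionality hypothesis is genuinely needed is here, to rule out the degenerate situation where $P$ or $Q$ lifts through a $[\pm 1]$-quotient of an elliptic family and the rigidity mechanism fails.
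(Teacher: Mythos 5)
The paper does not prove this theorem: the sentence immediately preceding its statement reads ``The following theorem is a part of \cite[Theorem B]{FG22} and will be needed to prove Theorem \ref{polynoninj},'' so the result is quoted from Favre--Gauthier's book and used as a black box. There is no in-paper proof to compare against, and you were not expected to supply one.

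As a free-standing sketch of the cited result, your outline has the right coarse shape but understates the difficulty where it matters. The direction $(2)\Rightarrow(1)$ is indeed soft and your argument is fine: the matching condition $\tau(P^{\circ N}(a))=\pi(Q^{\circ M}(b))$ together with the two semi-conjugacies forces $a(t)\in\PrePer(P(t))$ and $b(t)\in\PrePer(Q(t))$ to hold for the same parameters, activity of $(P,a)$ pushes forward to activity of the common target pair, and a Montel argument then produces infinitely many such $t$. For $(1)\Rightarrow(2)$, invoking arithmetic equidistribution to conclude proportionality of the fiberwise Green functions/bifurcation measures of $(P,a)$ and $(Q,b)$ is the correct first step, but the passage from that analytic coincidence to the \emph{existence of the families} $R,\tau,\pi$ with $\tau\circ P^{\circ N}=R\circ\tau$ and $\pi\circ Q^{\circ M}=R\circ\pi$ is the principal content of a large part of \cite{FG22} and cannot be dispatched by gesturing at Ritt, Medvedev--Scanlon, and ``tracking a single preperiodic parameter.'' In particular, one must first establish an algebraic relation between $P$ and $Q$ at \emph{one} good parameter, then show it spreads out over $C$, and the rigidity mechanism one needs is exactly the delicate step you label the main obstacle. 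Also, your closing remark about ruling out lifts through $[\pm1]$-quotients of elliptic families is misplaced in the polynomial setting: polynomials are never Latt\`es, so the exceptional cases to exclude are power maps $z^d$ and Chebyshev maps $\pm T_d$, and non-exceptionality is used more pervasively (e.g.\ to guarantee uniqueness properties of the semi-conjugacy factorizations) rather than at a single ``elliptic'' juncture.
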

Here, a \emph{dynamical pair} parametrized by $C$ of degree $d$ is a pair $(P,a)$ where $P:C\to\Poly^d$ is a family of polynomials of degree $d$ over $C$, and $a:C\to\A^1$ is a morphism (i.e., a marked point of the family $P$). The dynamical pair $(P,a)$ is called \emph{active} (or \emph{not persistently preperiodic}) if for all integers $n>m\geq0$, we have $(P(t))^{\circ n}(a(t))\neq (P(t))^{\circ m}(a(t))$ in the function field of $C$. It is called \emph{non-exceptional} if the polynomial $P(t)$ is non-exceptional for every $t\in U(\overline{\Q})$, where $U$ is a suitable non-empty Zariski open subset of $C$. If two pairs $(P,a)$ and $(Q,b)$ satisfy the property in \eqref{FG2}, then they are called \emph{entangled}.

\subsection{Description of the non-injective locus}\label{S3.2}
We prove Theorem~\ref{polynoninj}, relying on results of Pakovich and Theorem~\ref{FG} of Favre-Gauthier.

Following the proof of \cite[Lemma~3.5]{JX23}, we obtain the following elementary lemma:
\begin{Lem}\label{sac}
Let $f$ be a polynomial map of degree $d\geq2$. Then its multiplier spectrum $(S_n(f))_{n=1}^\infty$ determines the number $\sac(f)\in\{0,1,\dots,d-1\}$ of superattracting cycles of $f$ in $\C$ (counted without multiplicity).
\end{Lem}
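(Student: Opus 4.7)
The plan is to show that for each $n\geq1$ the multi-set of multipliers encoded in $S_n(f)$ determines the number of superattracting cycles of $f$ on $\P^1(\C)$ of each exact period $k$. Since $\infty$ is always a superattracting fixed point of a polynomial of degree $d\geq2$, subtracting one from the resulting total recovers $\sac(f)$.

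Concretely, from $S_n(f)=(\sigma_{1,n}(f),\dots,\sigma_{N_{d,n},n}(f))$ one recovers the multi-set $M_n:=\{\rho_{f^{\circ n}}(z)\}_{z\in\Per_n(f)}$ as the roots (with multiplicity) of the monic polynomial in $\C[T]$ whose coefficients are, up to signs, the $\sigma_{j,n}(f)$. Let $m_n(f)$ denote the multiplicity of $0$ in $M_n$, and let $a_k$ denote the number of superattracting cycles of $f$ on $\P^1(\C)$ of exact period $k$, so that $\sac(f)=-1+\sum_{k\geq1}a_k$. I claim
\begin{equation*}
m_n(f)=\sum_{k\mid n}k\,a_k.
\end{equation*}
Indeed, if $C$ is a superattracting cycle of exact period $k$ dividing $n$, then each $z_0\in C$ is a fixed point of $f^{\circ n}$ with $\rho_{f^{\circ n}}(z_0)=0\neq 1$. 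Hence $z_0$ is a simple zero of $f^{\circ n}(z)-z$ on $\A^1(\C)$ when $z_0\neq\infty$, while at $z_0=\infty$ (which is always superattracting for a polynomial of degree $\geq 2$) passing to the chart $w=1/z$ shows that $\infty$ is a simple fixed point of $f^{\circ n}$ with multiplier $0$. So each $z_0\in C$ appears with multiplicity one in $\Per_n(f)$, contributing a single $0$ to $M_n$, and $C$ contributes exactly $k$ zeros in total. Conversely, every occurrence of $0$ in $M_n$ arises this way, giving the claim.

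Möbius inversion then yields $k\,a_k=\sum_{d\mid k}\mu(k/d)\,m_d(f)$, so each $a_k$, and hence the sum $\sum_{k\geq1}a_k$ (which is finite by Riemann--Hurwitz), is determined by $(S_n(f))_{n\geq1}$. Subtracting one for the cycle $\{\infty\}$ recovers $\sac(f)$. The only delicate input is the simplicity of a superattracting fixed point in the scheme-theoretic multi-set $\Per_n(f)$, handled above; the inversion itself is routine, so I do not anticipate a genuine obstacle.
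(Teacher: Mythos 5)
Your proof is correct and self-contained: the observation that a superattracting periodic point has multiplier $0\neq1$, hence appears with multiplicity one in $\Per_n(f)$, gives $m_n(f)=\sum_{k\mid n}k\,a_k$, and M\"obius inversion then recovers each $a_k$ and hence $\sac(f)=-1+\sum_k a_k$ after subtracting the cycle $\{\infty\}$. The paper gives no explicit argument for this lemma, merely citing the proof of Lemma~3.5 in Ji--Xie; your counting is exactly the elementary content that citation stands for.
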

Since $\infty$ is a totally ramified fixed point of $f$, the bound $\sac(f)\leq d-1$ follows from the Riemann–Hurwitz formula. Note that $\sac(\cdot)$ descends to the moduli space $\MPoly^d$.

\begin{Prop}\label{multidet}
	Let $\phi:C\to\Poly^d$ be a non-isotrivial family of polynomials of degree $d\geq2$, parametrized by an irreducible curve $C$. Assume that $\phi$ factors through $\eta_0:\A^{d-1}\to\Poly^d,(c,a)\mapsto P_{c,a}$ via some morphism $\psi:C\to\A^{d-1}$. Fix such a $\psi$. Let $m(\phi)\geq0$ be the smallest nonnegative integer $m$ such that
	$$\{t\in C(\C)\colon\sac(\phi(t))=m\}$$
	is infinite. Define
	$$Z(\phi):=\{t\in C(\C)\colon\sac(\phi(t))>m(\phi)\},$$
	and let $Z^\prime(\phi)$ be the set of all $t\in C(\C)$ such that $c_j\circ\psi$ is not a persistently $\phi$-preperiodic marked point but $c_j(\psi(t))\in\Per(\phi(t))$ for some $0\leq j\leq d-2$. Here $c_j$ is the coordinate map $\A^{d-1}\to\A^1,(c,a)\mapsto c_j$ for $1\leq j\leq d-2$, and $c_0=0:\A^{d-1}\to\A^1$.
	
	Then $m(\phi)$ and $Z(\phi)$ depend only on the multiplier spectrum $(S_n(\phi(t)))_{n\geq1,t\in C(\C)}$. Moreover, $Z(\phi)$ is an infinite subset of $C(\C)$ and the symmetric difference
	$$Z(\phi)\Delta Z^\prime(\phi)=(Z(\phi)\cup Z^\prime(\phi))\setminus(Z(\phi)\cap Z^\prime(\phi))$$
	is finite.
\end{Prop}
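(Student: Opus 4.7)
My plan is as follows. Assertion (1) is immediate from Lemma~\ref{sac}: that lemma expresses $\sac(f)$ as a function of the multiplier spectrum $S(f)$, so the function $t\mapsto\sac(\phi(t))$ on $C(\C)$ is determined by $(S_n(\phi(t)))_{n\geq 1,\,t\in C(\C)}$, and both $m(\phi)$ and $Z(\phi)$ are defined purely from this function. For the remaining assertions I would partition the marked critical points $c_0,\ldots,c_{d-2}$ of $\phi$ into three classes: the \emph{stably periodic} set $SP$ (satisfying $\phi^n\circ c_j=c_j$ as sections for some $n\geq 1$); the \emph{stably preperiodic but not stably periodic} set $SPP$; and the \emph{active} set $A$ (not stably preperiodic). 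Non-isotriviality forces $A\neq\emptyset$: otherwise $\phi(C)$ would lie in the zero-dimensional PCF locus of $\MPoly^d$. Let $m^*$ denote the number of equivalence classes of $SP$ under $c\sim c'$ iff $\phi^m\circ c=c'$ identically for some $m\geq 0$; this counts the ``generic'' superattracting cycles carried by $SP$.

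Next, I would show $m(\phi)=m^*$. At every $t$, $SP$ contributes at most $m^*$ distinct superattracting cycles, with equality outside a finite ``collapse'' locus (a pairwise collapse between $\sim$-inequivalent classes is a proper closed condition on $C$, hence finite). For $c_j\in SPP$ with $\phi^N\circ c_j=\phi^M\circ c_j$ identically and $N>M\geq 1$, any period $p$ of $c_j(t)$ must divide $N-M$, whence $\{t:c_j(t)\in\Per(\phi(t))\}\subseteq\bigcup_{p\mid N-M}\{t:\phi^p(c_j)(t)=c_j(t)\}$ is a finite set. For $c_j\in A$, the set $B_j:=\{t:c_j(t)\in\Per(\phi(t))\}=\bigcup_{n\geq 1}\{t:\phi^n(c_j)(t)=c_j(t)\}$ is a countable union of finite sets. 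Removing these countably many exceptional parameters from the uncountable set $C(\C)$ leaves an uncountable set on which the only periodic critical points lie in $SP$ and $\sac(\phi(t))=m^*$; since also $\{t:\sac(\phi(t))<m^*\}$ is contained in the finite collapse locus, this gives $m(\phi)=m^*$.

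To prove $Z(\phi)$ infinite and $Z(\phi)\Delta Z^\prime(\phi)$ finite, I would invoke the activity theorem: for each $c_j\in A$ the set $B_j$ is infinite. For $t\in B_j$, the superattracting cycle containing $c_j(t)$ is either (i) a generic cycle, meaning $c_j(t)=\phi^m(c_{j'}(t))$ for some $c_{j'}\in SP$ and $0\leq m<n_{j'}$ (with $n_{j'}$ the period of $c_{j'}$), or (ii) a new cycle. Case (i) occurs for only finitely many $t$: each identity $c_j=\phi^m\circ c_{j'}$ holds globally iff $c_j$ is stably preperiodic, which it is not since $c_j\in A$. In case (ii), outside the finite $SP$-collapse locus, $\sac(\phi(t))\geq m^*+1$, so $t\in Z(\phi)$; hence $Z(\phi)\supseteq B_j\setminus(\text{finite})$ is infinite. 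By the same dichotomy, $Z^\prime(\phi)\setminus Z(\phi)$ is contained in the finite union of the case-(i) loci over $c_j\in A$; and $Z(\phi)\setminus Z^\prime(\phi)$ is finite because at any such $t$ every periodic critical point lies in $SP\cup SPP$, so outside the finite union of the $SPP$-periodicity loci and the $SP$-collapse locus one has $\sac(\phi(t))\leq m^*$, contradicting $t\in Z(\phi)$.

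The only nontrivial external input is the activity theorem guaranteeing $B_j$ infinite for $c_j\in A$; without it the combinatorial argument above cannot produce a single element of $Z(\phi)$. I expect this to be the main obstacle to a fully self-contained proof, although for algebraic families of polynomials the statement is classical, via bifurcation-current arguments à la Lyubich--Mañé--Sad--Sullivan (or via Montel's theorem applied to the orbit $\{\phi^n(c_j)\}_n$).
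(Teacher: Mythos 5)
Your proposal is correct and follows essentially the same route as the paper: partition the marked critical points into stably periodic, stably preperiodic-but-not-periodic, and active classes; show $m(\phi)$ equals the number of $\sim$-equivalence classes of stably periodic markers via a removal of finitely many collapse/periodicity parameters; and invoke the fact (the paper's citation of \cite[Lemma 2.4]{JX23}, your ``activity theorem'') that each active critical point is periodic for infinitely many parameters, to get $Z'(\phi)$ infinite and $Z(\phi)\Delta Z'(\phi)$ finite. The only cosmetic difference is that the paper phrases the finiteness-of-exceptions step by shrinking $C(\C)$ once and for all before the case analysis, while you track each finite exceptional locus separately; the substance is identical, and you have correctly located the single genuinely nontrivial input.
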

\begin{proof}
	By Lemma~\ref{sac}, the number $m(\phi)$ depends only on the multiplier spectrum, and hence so does $Z(\phi)$. Note that $m(\phi)$ does not change after removing finitely many points of $C(\C)$. Hence it suffices to prove that $Z(\phi)=Z^\prime(\phi)$ is infinite, after removing finitely many points of $C(\C)$.
	
	For simplicity, we denote the marked point $c_j\circ\psi:C\to\A^1$ of $\phi$ by $a_j$, for $0\leq j\leq d-2$.
	
	Let $(i,j)\in\{0,\dots,d-2\}^2$ be a pair of indices such that both $a_i$ and $a_j$ are persistently $\phi$-periodic. Let $n,m\in\Z_{>0}$ be minimal such that
	$$\phi(t)^{\circ n}(a_i(t))=a_i(t)\quad\text{and}\quad\phi(t)^{\circ m}(a_j(t))=a_j(t)$$
	in the function field of $C$. For every integer $0\leq r< n$, the subset
	$$\{t\in C(\C)\colon a_j(t)=\phi(t)^{\circ r}(a_i(t))\}$$
	is either finite or all of $C(\C)$. After removing finitely many points of $C(\C)$, we may assume that for all such pairs $(i,j)$, either
	\begin{equation}\label{ijorbsame}
		O_{\phi(t)}(a_i(t))=O_{\phi(t)}(a_j(t))\quad\text{for all }t\in C(\C),
	\end{equation}
	or
	\begin{equation}\label{ijorbnotinter}
		O_{\phi(t)}(a_i(t))\cap O_{\phi(t)}(a_j(t))=\emptyset\quad\text{for all }t\in C(\C).
	\end{equation}
	Here for $g\in\C(z)$ and $w\in\P^1(\C)$, $O_g(w)$ denotes the forward $g$-orbit of $w$. In the first case \eqref{ijorbsame}, we write $i\sim j$. Let
	$$P(\phi):=\{0\leq i\leq d-2\colon a_i\text{ is persistently }\phi\text{-periodic}\}.$$
	Then $\sim$ is an equivalence relation on $P(\phi)$.
	
	Let $i\in\{0,\dots,d-2\}$ be an index such that $a_i$ is persistently $\phi$-preperiodic but not persistently $\phi$-periodic. Let $k>l\geq0$ be minimal integers such that
	$$\phi(t)^{\circ k}(a_i(t))=\phi(t)^{\circ l}(a_i(t))$$
	in the function field of $C$. Then $l>0$. For every integer $l\leq r< k$, the subset
	$$\{t\in C(\C)\colon a_i(t)=\phi(t)^{\circ r}(a_i(t))\}$$
	is finite, since $a_i$ is not persistently periodic. After removing finitely many points of $C(\C)$, for all such indices $i$, we may assume that
	\begin{equation}\label{stricprepernoper}
		a_i(t)\notin\Per(\phi(t))\quad\text{for all }t\in C(\C).
	\end{equation}
	
	It is well-known that PCF parameters do not form families in $\MPoly^d$. Since $\phi$ is non-isotrivial, we conclude that
	\begin{equation}\label{Anonempty}
		A(\phi):=\{0\leq i\leq d-2\colon a_i\text{ is not persistently }\phi\text{-preperiodic}\}\neq\emptyset.
	\end{equation}
	After removing finitely many points of $C(\C)$ and arguing similarly, we may assume that
	\begin{equation}\label{APorbnotinter}
		c_i(t)\notin O_{\phi(t)}(c_k(t))
	\end{equation}
	for all $(i,k)\in A(\phi)\times P(\phi)$ and all $t\in C(\C)$. Fix any $i\in A(\phi)$. By \cite[Lemma~2.4]{JX23},
	\begin{equation}\label{Ainfper}
		\#\{t\in C(\C)\colon a_i(t)\in\Per(\phi(t))\}=+\infty.
	\end{equation}
	On the other hand, since $a_i$ is not persistently $\phi$-preperiodic, we have
	$$\#\{t\in C(\C)\colon\phi(t)^{\circ n}(a_i(t))=a_i(t)\}<+\infty,$$
	for every $n\in\Z_{>0}$. So $\{t\in C(\C)\colon a_i(t)\in\Per(\phi(t))\}$ is countable. Due to the uncountability of $C(\C)$, 
	\begin{equation}\label{nonprepernonperinf}
		\#\bigcap_{k\in A(\phi)}\{t\in C(\C)\colon a_k(t)\notin\Per(\phi(t))\}=+\infty.
	\end{equation}
	
	Based on the case analysis above, it is evident that
	$$m(\phi)=\#(P(\phi)/\sim).$$
	Indeed, we have $\sac(\phi(t))\geq\#(P(\phi)/\sim)$ for all $t\in C(\C)$ by the analysis of persistently $\phi$-periodic $a_i$’s, hence $m(\phi)\geq\#(P(\phi)/\sim)$. By \eqref{stricprepernoper} and \eqref{nonprepernonperinf}, there are infinitely many (in fact, uncountably many) $t\in C(\C)$ such that $\sac(\phi(t))=\#(P(\phi)/\sim)$. Hence $m(\phi)=\#(P(\phi)/\sim)$ by definition.
	
	By \eqref{Anonempty} and \eqref{Ainfper}, $Z^\prime(\phi)$ is infinite. For every $t\in Z^\prime(\phi)$, from \eqref{APorbnotinter} we see that
	$$\sac(\phi(t))\geq\#(P(\phi)/\sim)+1>m(\phi),$$
	so $Z^\prime(\phi)\subseteq Z(\phi)$; hence $Z(\phi)$ is also infinite. Conversely, for every $t\in C(\C)\setminus Z^\prime(\phi)$, by \eqref{stricprepernoper} we have $\sac(\phi(t))=\#(P(\phi)/\sim)=m(\phi)$, so $Z(\phi)\subseteq Z^\prime(\phi)$. Therefore $Z(\phi)=Z^\prime(\phi)$, completing the proof. 
\end{proof}

\begin{proof}[Proof of Theorem~\ref{polynoninj}]
After a suitable base change of $C$ (and restricting to a suitable non-empty affine open subset) over $\overline{\Q}$, we may assume that the following hold:
\begin{itemize}
	\item for every $t\in C(\C)$, both $\phi_1(t)$ and $\phi_2(t)$ are non-exceptional, and
	$$([\phi_1(t)],[\phi_2(t)])\in\PNI_d;$$
	\item for $1\leq j\leq2$, the morphism $\Psi\circ\phi_j:C\to\MPoly^d$ factors through $\Psi\circ\eta_0:\A^{d-1}\to\MPoly^d$ via some morphism $\psi_j:C\to\A^{d-1}$ over $\overline{\Q}$.
\end{itemize}
If $K$ is a number field such that $\phi_1$, $\phi_2$, and $C$ are defined over $K$, then one can verify that the base change of $C$, $\psi_1$, and $\psi_2$ in above reduction can also be defined over $K$. As all the properties we consider (multiplier spectrum, equivalence, exceptionalness etc.) do not change under conjugacy, we may replace $\phi_j$ by $\eta_0\circ\psi_j$ and assume that $\phi_j:C\to\Poly^d$ factors through $\eta_0:\A^{d-1}\to\Poly^d$ by $\psi_j$, for $1\leq j\leq2$.

By Proposition~\ref{multidet}, we obtain:
\begin{itemize}
	\item $m(\phi_1)=m(\phi_2)$ and $Z(\phi_1)=Z(\phi_2)$;
	\item $\# Z^\prime(\phi_1)=\# Z^\prime(\phi_2)=+\infty$ and $\#\left(Z^\prime(\phi_1)\Delta Z^\prime(\phi_2)\right)<+\infty$.
\end{itemize}
For $1\leq j\leq2$ and
$$k\in A(\phi_j)=\{0\leq l\leq d-2\colon c_l\circ\psi_j\text{ is not persistently }\phi_j\text{-preperiodic}\},$$
set
$$Z_{jk}:=\{t\in C(\C)\colon c_k(\psi_j(t))\in\Per(\phi_j(t))\}=\{t\in C(\overline{\Q})\colon c_k(\psi_j(t))\in\Per(\phi_j(t))\},$$
where the last equality is because all the objects are defined over $\overline{\Q}$. By definition, $Z^\prime(\phi_j)=\bigcup_{k\in A(\phi_j)}Z_{jk}$ for $1\leq j\leq2$. Since the intersection of $Z^\prime(\phi_1)$ and $Z^\prime(\phi_2)$ is infinite, we can choose $(k,l)\in A(\phi_1)\times A(\phi_2)$ such that $\#(Z_{1k}\cap Z_{2l})=+\infty$. After renumbering, assume $k=l=0$. Then for $1\leq j\leq2$, the marked critical point $0$ is not persistently $\phi_j$-preperiodic, hence $(\phi_j,0)$ is an active dynamical pair. We have
$$\#\{t\in C(\overline{\Q})\colon 0\in\Per(\phi_1(t))\cap\Per(\phi_2(t))\}=+\infty,$$
hence the pairs $(\phi_1,0)$ and $(\phi_2,0)$ are entangled in the sense of Favre and Gauthier \cite{FG22}.

Applying Theorem~\ref{FG}, there exist $N,M\in\Z_{>0}$ and families $R,\tau,\pi$ of polynomials of degree $\geq1$ parametrized by $C$ such that
$$\tau\circ\phi_1^{\circ N}=R\circ\tau\quad\text{and}\quad\pi\circ\phi_2^{\circ M}=R\circ\pi.$$
Note that $d^N=\deg(\phi_1^{\circ N})=\deg(R)=\deg(\phi_2^{\circ M})=d^M$, so $N=M$.

Fix $t\in C(\C)$. Write $h_1=\phi_1(t)^{\circ N}$ and $h_2=\phi_2(t)^{\circ N}$. For simplicity, we abuse the notation and write $R,\tau,\pi$ for $R(t),\tau(t),\pi(t)$, respectively. Then
$$\tau\circ h_1=R\circ\tau\quad\text{and}\quad\pi\circ h_2=R\circ\pi.$$
Assume that $h_1$ and $h_2$ are not equivalent, i.e., the statement \eqref{polynoninj1} of Theorem~\ref{polynoninj} does not hold for $N$ and the fixed $t\in C(\C)$.

\medskip

\textbf{Claim:} There exist integers $k_1,k_2,l>0$, a non-constant polynomial $V(z)\in\C[z]\setminus\C$, and $\zeta\in\C$, with
$$2\leq k:=\lcm(k_1,k_2)\leq d^N,\quad\gcd(d,k)=1,\quad V(0)\neq0,\quad\zeta^{k_2}=1,$$
such that
$$h_1\sim z^l\cdot V(z^{k_1})^{k_1^\prime}\quad\text{and}\quad h_2\sim\zeta z^l\cdot V(z^{k_2})^{k_2^\prime},$$
where $k_j^\prime=k/k_j$ for $1\leq j\leq2$.
	
We prove the Claim using results of Pakovich and elementary argument. Since $h_1\not\sim h_2$, we have $h_j\not\sim R$ for some $1\leq j\leq2$. Without loss of generality, assume $h_1\not\sim R$. By Theorem~\ref{genla32}, $R$ is a generalized Latt\`es map and there exist $\tau_0,\tau_1,\tilde{h}_1\in\C(z)\setminus\C$ satisfying:
\begin{itemize}
	\item $\tau=\tau_0\circ\tau_1$ and $k_1:=\deg(\tau_0)\geq2$;
	\item $\tilde{h}_1\circ\tau_1=\tau_1\circ h_1$, $R\circ\tau_0=\tau_0\circ\tilde{h}_1$, and $h_1\sim\tilde{h}_1$;
	\item $(f=R,p=\tau_0,g=\tau_0,q=\tilde{h}_1)$ is a good solution of $f\circ p=g\circ q$;
	\item Both $R:\sO_2^{\tau_0}\to\sO_2^{\tau_0}$ and $\tilde{h}_1:\sO_1^{\tau_0}\to\sO_1^{\tau_0}$ are minimal holomorphic maps between orbifolds;
	\item There exists $s\in\Z_{>0}$ such that $\tau_1$ is a compositional right factor of $h_1^{\circ s}$ and a compositional left factor of $\tilde{h}_1^{\circ s}$.
\end{itemize}
After conjugacy by M\"obius transformations if necessary, we may assume that $\tau_1$, $\tau_0$, and $\tilde{h}_1$ are polynomials.

We show that, up to conjugacy by a degree one polynomial, we may assume $\tau_0(z)=z^{k_1}$. If $R$ is exceptional, then by \cite[Theorem~3.39]{FG22}, $h_1=\phi_1(t)^{\circ N}$ and $\phi_1(t)$ are also exceptional, contradicting our assumption. Thus $R$ is a non-exceptional polynomial that is also a generalized Latt\`es map. Since $\tau_0:\sO_1^{\tau_0}\to\sO_2^{\tau_0}$ is a covering map between orbifolds, the Riemann–Hurwitz formula gives
$$\chi(\sO_2^{\tau_0})=\frac{\chi(\sO_1^{\tau_0})}{k_1}\leq\frac{2}{k_1}\leq1<2.$$
In particular, the ramification function of $\sO_2^{\tau_0}$ is not identically $1$. By Lemma~\ref{polygenla}, we have $\nu(\sO_2^{\tau_0})=\{n,n\}$ for some integer $n\geq2$. By the definition of $\sO_2^{\tau_0}$, the point $\infty$ shows $k_1=\deg(\tau_0)\in\nu(\sO_2^{\tau_0})$, hence $n=k_1$. Since $\nu(\sO_2^{\tau_0})=\{k_1,k_1\}$, the polynomial $\tau_0$ has exactly one critical value in $\C$, hence is of the form $\tau_0(z)=az^{k_1}+c$, where $a\in\C^\times$ and $c\in\C$. Replacing the triple $(\tau_0,R,\tau)$ with $(L\circ\tau_0,L\circ R\circ L^{-1},L\circ\tau)$, where $L(z)=(z-c)/a$, we may assume $\tau_0(z)=z^{k_1}$.

By Theorem~\ref{zngood} and Remark~\ref{zngoodpoly}, there exists $R_1(z)\in\C[z]\setminus\{0\}$ such that
$$R(z)=z^{r_1} R_1(z)^{k_1}\quad\text{and}\quad\tilde{h}_1(z)=z^{r_1} R_1(z^{k_1}),$$
for some integer $r_1\geq1$ with $\gcd(r_1,k_1)=1$. Since $R$ is non-exceptional, we have $\deg(R_1)\geq1$.

Similarly, if $h_2\not\sim R$, then there exist $R_2(z),\tilde{h}_2\in\C[z]\setminus\C$ such that
$$h_2\sim\tilde{h}_2,\quad R(z)=z^{r_2} R_2(z)^{k_2},\quad\tilde{h}_2(z)=z^{r_2} R_2(z^{k_2}),$$
for some integers $r_2\geq1$ and $k_2\geq2$ with $\gcd(r_2,k_2)=1$. If $h_2\sim R$, then we set $\tilde{h}_2=R_2=R$, $r_2=0$, and $k_2=1$.

Set $k=\lcm(k_1,k_2)\geq k_1\geq2$ and $k_j^\prime=k/k_j$ for $1\leq j\leq2$. Then
\begin{equation}\label{tildehj}
	\tilde{h}_1(z^{k_1^\prime})^{k_1}=R(z^k)=\tilde{h}_2(z^{k_2^\prime})^{k_2}.
\end{equation}
Write $R_j(z)=z^{l_j}V_j(z)$ with $l_j\in\Z_{\geq0}$ and $V_j(z)\in\C[z]$ satisfying $V_j(0)\neq0$, for $1\leq j\leq2$. Then \eqref{tildehj} becomes
$$z^{(r_1+k_1 l_1)k}\cdot V_1(z^k)^{k_1}=z^{(r_2+k_2 l_2)k}\cdot V_2(z^k)^{k_2}.$$
Comparing the order at $0$, we get $r_1+k_1 l_1=r_2+k_2 l_2=:l\in\Z_{>0}$. We have $V_1(z^k)^{k_1}=V_2(z^k)^{k_2}$, so $V_1(z)^{k_1}=V_2(z)^{k_2}$. Examining irreducible factors of $V_1(z)$, there exists $V(z)\in\C[z]$ such that $V_1(z)=V(z)^{k_1^\prime}$. Then $V_2(z)^{k_2}=V(z)^k=V(z)^{k_2^\prime k_2}$, which implies that $V_2(z)=\zeta\cdot V(z)^{k_2^\prime}$ for some $k_2$-th root of unity $\zeta\in\C^\times$. Therefore
$$\tilde{h}_1(z)=z^l\cdot V(z^{k_1})^{k_1^\prime}\quad\text{and}\quad\tilde{h}_2(z)=\zeta\, z^l\cdot V(z^{k_2})^{k_2^\prime}.$$
Note that $\gcd(r_1,k_1)=1$ implies $\gcd(d,k_1)=1$, since
$$d^N=\deg(R)=r_1+k_1\cdot\deg(R_1).$$
Similarly, $\gcd(d,k_2)=1$. Then $\gcd(d,k)=1$. Since $R(z)=z^l V(z)^k$ is non-exceptional, we have $\deg(V)\geq1$. In particular,
$$d^N=\deg(R)=l+k\deg(V)\geq r_1+k\cdot1\geq3.$$
The proof of the Claim is completed.

\medskip

After a further iterate, we can make further reductions. Set $W_1(z)=V(z)$. For $n\geq1$, define $W_{n+1}(z)=W_n(z)^l\cdot V(z^{l^n} W_n(z)^k)$ inductively. Then $W_n(z)\in\C[z]\setminus\C$ and $W_n(0)\neq0$ for all $n\geq1$. By induction, it is clear that for every $n\geq1$,
$$\tilde{h}_1^{\circ n}(z)=z^{l^n}\cdot W_n(z^{k_1})^{k_1^\prime}\quad\text{and}\quad\tilde{h}_2^{\circ n}(z)=\zeta^{1+\cdots+l^{n-1}}\cdot z^{l^n}\cdot W_n(z^{k_2})^{k_2^\prime}$$
Since $\gcd(d,k)=1$ and $l=d^N-k\deg(V)$, we have $\gcd(l,k)=1$. By Euler's theorem, for the integer $k_2\varphi(k)\in\Z_{>0}$, we have
$$l^{k_2\varphi(k)}\equiv1\Mod{k}\quad\text{and}\quad1+\cdots+ l^{k_2\varphi(k)-1}\equiv0\Mod{k_2}.$$
Let $n_0\geq1$ be the minimal integer such that
$$l^{n_0}\equiv1\Mod{k}\quad\text{and}\quad1+\cdots+ l^{n_0-1}\equiv0\Mod{k_2}.$$
Replacing $(h_i,\tilde{h}_i,N,l,V,\zeta)$ by $(h_i^{\circ n_0},\tilde{h}_i^{\circ n_0},n_0 N,l^{n_0},W_{n_0},1)$ ($i=1,2$), we may assume $l\equiv1\Mod{k}$ and $\zeta=1$. We still assume $h_1\not\sim h_2$ and hence $\tilde{h}_1\not\sim\tilde{h}_2$. In particular, $\tilde{h}_1\neq\tilde{h}_2$, so $k_1\neq k_2$. This concludes the proof.
\end{proof}

\begin{Rem}\label{Remminequiv}
	Pakovich \cite[Theorem~1.6]{Paksemidecomp} (see also \cite[Theorem~3.44]{FG22}) showed that for every polynomial $P\in\C[z]$ of degree $d\geq2$, there exist $\pi_\min\in\C[z]\setminus\C$ and a polynomial $P_\min\in\C[z]$ of degree
	$d$ with $P\geq_{\pi_\min}P_\min$ satisfying the following universal property: for any polynomial $Q\in\C[z]$ with $P\geq Q$, there exist $\pi,\omega\in\C[z]\setminus\C$ such that $P\geq_\omega Q\geq_\pi P_\min$ and $\pi_\min=\pi\circ\omega$. Furthermore, $\deg(\pi_\min)$ is bounded by a constant depending only on $d$.
	
	From the proof above, it follows immediately that in Theorem~\ref{polynoninj} \eqref{polynoninj2}, the polynomials $(\phi_1(t)^{\circ N})_\min$ and $(\phi_2(t)^{\circ N})_\min$ are mutually semi-conjugate, hence
	$$(\phi_1(t)^{\circ N})_\min\sim (\phi_2(t)^{\circ N})_\min$$
	by \cite[Theorem~1.5]{Paksemidecomp}.
\end{Rem}

\begin{Rem}\label{Remmainpf}
Let $K$ be a number field such that $\phi_1$, $\phi_2$, and $C$ are defined over $K$. By \cite[Remark~5.21]{FG22}, the positive integer $N$ in \eqref{polynoninj1} or \eqref{polynoninj2} of Theorem~\ref{polynoninj} is bounded by a constant depending only on $[K:\Q]$ and $d$, as $n_0\leq k_2\varphi(k)$.
\end{Rem}

\begin{proof}[Proof of Theorem~\ref{intertwined}]
By restricting to a non-empty affine subset $C_0\subseteq C$, we can define a section
$$\psi:C_0\to\Poly^d\times\Poly^d$$
of
$$\Psi\times\Psi:\Poly^d\times\Poly^d\to\MPoly^d\times\MPoly^d$$
over $C_0$. By Theorem~\ref{polynoninj}, there exists a finite subset $S\subset C_0(\C)$ such that for every $t\in C_0(\C)\setminus S$, $\phi_1(t):=\pi_1(\psi(t))$ and $\phi_2(t):=\pi_2(\psi(t))$ satisfy \eqref{polynoninj1} or \eqref{polynoninj2} of Theorem~\ref{polynoninj}, where $\pi_j:\Poly^d\times\Poly^d\to\Poly^d$ is the $j$-th projection for $1\leq j\leq2$.

By \cite[Theorem~3.39(4)(5) and Proposition~3.41(3)]{FG22}, $f_t$ and $g_t$ are intertwined if they satisfy Theorem~\ref{polynoninj} \eqref{polynoninj1}. Note that for all $Q\in\C[z]\setminus\{0\}$ and integers $m,n\geq0$, the polynomials $z^m Q(z^n)$ and $z^m Q(z)^n$ are intertwined \cite[Theorem~3.39]{FG22}. Again by \cite[Theorem~3.39]{FG22}, we conclude that $f_t$ and $g_t$ are intertwined if they satisfy Theorem~\ref{polynoninj} \eqref{polynoninj2}. As $S\cup(C\setminus C_0)$ is finite, for all but finitely many $t=([f_t],[g_t])\in C(\C)$, $f_t$ and $g_t$ are intertwined. Ji and Xie \cite[Remark~1.6]{JX23} showed that the intertwined locus is Zariski-closed in $\sM_d\times\sM_d$, hence also Zariski-closed in $\MPoly^d\times\MPoly^d$. Therefore, for every $t=([f_t],[g_t])\in C(\C)$, $f_t$ and $g_t$ are intertwined.
\end{proof}

\subsection{Ritt moves}\label{S3.3}
Case \eqref{polynoninj2} of Theorem~\ref{polynoninj} involves polynomials arising from the ``Ritt moves'' in the Ritt theory of polynomial decompositions \cite{Ritt}; see \cite{RittZM} and \cite[\S~3.5.1]{FG22}. Up to composition with degree one polynomials, the only solutions of the equation $P\circ Q=\hat{P}\circ\hat{Q}$ in indecomposable $P,Q,\hat{P},\hat{Q}\in\C[z]$ (of degree $\geq2$) are the trivial solution $(P,Q)=(\hat{P},\hat{Q})$ and the nontrivial solutions of the forms:
\begin{align}
	\label{RM} z^n\circ z^s R(z^n)&=z^s R(z)^n\circ z^n,\\
	\label{Tcomm} T_p\circ T_q&=T_q\circ T_p,
\end{align}
where $R\in\C[z]\setminus\{0\}$, integers $s\geq0$ and $n\geq2$ are coprime, and $p,q$ are distinct primes. Such a replacement of $(P,Q)$ by $(\hat{P},\hat{Q})$ is referred to as a \emph{Ritt move} (up to composing with degree one polynomials). In this article, working modulo composition with maps in $\Aff(\C)$, we consider only the case \eqref{RM} and call the pair $(z^s R(z^n),z^s R(z)^n)$ a \emph{Ritt move}, as the trivial case is uninteresting, and \eqref{Tcomm} involves exceptional Chebyshev polynomials. We may also consider \eqref{RM} when $P,Q,\hat{P},\hat{Q}$ are not necessarily indecomposable.

With additional hypotheses, we can still deduce that the multiplier spectra coincide over an (infinite) arithmetic progression in case \eqref{polynoninj2} of Theorem~\ref{polynoninj}, as follows:
\begin{Thm}\label{Rittmove}
	Let $r,k\in\Z_{>0}$ with $r\geq2$ and $R(z)\in\C[z]\setminus\{0\}$. Set
	$$P(z)=z^r R(z^k)\quad\text{and}\quad Q(z)=z^r R(z)^k.$$
	Assume that $\gcd(r(r^M-1),k)=1$ for some $M\in\Z_{>0}$. Then there exist integers $d>c_1>0$ such that for every $N\in\Z_{\geq0}$,
	$$S_{c_1+Nd}(P)=S_{c_1+Nd}(Q).$$ 
\end{Thm}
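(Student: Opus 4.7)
The plan is to exploit the Ritt semi-conjugacy
\begin{equation*}
\pi\circ P=Q\circ\pi,\qquad \pi(z)=z^k,
\end{equation*}
which follows directly from $P(z)^k=(z^rR(z^k))^k=z^{rk}R(z^k)^k=Q(z^k)$ and iterates to $\pi\circ P^{\circ n}=Q^{\circ n}\circ\pi$ for every $n\geq 1$. Factoring $P^{\circ n}(z)^k-z^k$ yields the key algebraic identity
\begin{equation*}
\prod_{j=0}^{k-1}\bigl(P^{\circ n}(z)-\omega^jz\bigr)=Q^{\circ n}(z^k)-z^k,
\end{equation*}
where $\omega$ is a primitive $k$-th root of unity (the case $k=1$ is trivial since then $P=Q$, so I assume $k\geq 2$). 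The hypothesis gives $\gcd(r,k)=1$, so $d:=\ord_k(r)$ is defined, and in fact $d\geq 2$ because $d\mid M$ would contradict $\gcd(r^M-1,k)=1$ for $k\geq 2$. Set $c_1:=M\bmod d\in\{1,\dots,d-1\}$ (by the same observation); then $d>c_1>0$, and $r^{c_1+Nd}\equiv r^M\pmod k$ for every $N\geq 0$ gives $\gcd(r^{c_1+Nd}-1,k)=1$. The theorem therefore reduces to the claim $S_n(P)=S_n(Q)$ whenever $\gcd(r^n-1,k)=1$.

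I would establish this claim by constructing a multiset bijection
\begin{equation*}
\pi\colon\Fix(P^{\circ n})\cap(\A^1\setminus\{0\})\xrightarrow{\sim}\Fix(Q^{\circ n})\cap(\A^1\setminus\{0\})
\end{equation*}
that preserves both multiplicities (as roots of $P^{\circ n}(z)-z$ and $Q^{\circ n}(v)-v$) and multipliers; together with the trivial match of $0$ and $\infty$ (each a simple fixed point with multiplier $0$, using $r\geq 2$), this gives $S_n(P)=S_n(Q)$. For the set-level part, fix $w\in\Fix(Q^{\circ n})\cap(\A^1\setminus\{0\})$ and a $k$-th root $z_0$ of $w$; the semi-conjugacy forces $P^{\circ n}(z_0)=\omega^az_0$ for some $a\in\Z/k\Z$, and the scaling identity $P(\omega z)=\omega^rP(z)$ iterates to $P^{\circ n}(\omega z)=\omega^{r^n}P^{\circ n}(z)$, giving $P^{\circ n}(\omega^jz_0)=\omega^{j(r^n-1)+a}(\omega^jz_0)$. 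Hence $\omega^jz_0\in\Fix(P^{\circ n})$ iff $j(r^n-1)\equiv-a\pmod k$, which has a unique solution $j\in\Z/k\Z$ under our coprimality assumption. Multipliers match by the chain rule applied to $\pi\circ P^{\circ n}=Q^{\circ n}\circ\pi$: at any fixed point $u\neq 0$ of $P^{\circ n}$ (where $\pi$ is unramified) one obtains $(P^{\circ n})'(u)=(Q^{\circ n})'(\pi(u))$.

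The step I expect to be the most delicate — and the main obstacle — is preservation of multiplicities, for which the product identity is crucial. At a fixed point $u\in\A^1\setminus\{0\}$ of $P^{\circ n}$, only the factor $j=0$ on the left of the identity vanishes (the others equal $u(1-\omega^j)\neq 0$), so the multiplicity of $u$ as a root of the left-hand side equals its multiplicity in $\Fix(P^{\circ n})$. On the right, since $z\mapsto z^k$ is an unramified local isomorphism near $u\neq 0$, the multiplicity of $u$ as a root of $Q^{\circ n}(z^k)-z^k$ equals that of $w=\pi(u)$ as a root of $Q^{\circ n}(v)-v$, i.e., the multiplicity of $w$ in $\Fix(Q^{\circ n})$. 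Equating the two sides forces multiplicity-preservation across the bijection, completing the proof.
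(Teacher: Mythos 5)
Your proof is correct and follows essentially the same approach as the paper: both build the multiset bijection $z_0 \mapsto z_0^k$ from $\Fix(P^{\circ n})$ to $\Fix(Q^{\circ n})$ via the semi-conjugacy $z^k \circ P = Q \circ z^k$, use the factorization $\prod_{j=0}^{k-1}(P^{\circ n}(z)-\omega^j z)=Q^{\circ n}(z^k)-z^k$ and the local unramifiedness of $z\mapsto z^k$ away from the origin to transfer multiplicities, differentiate the semi-conjugacy to transfer multipliers, and then arrange $\gcd(r^n-1,k)=1$ along an arithmetic progression by Euler's theorem. Your phrasing of bijectivity (selecting the unique $k$-th root of each $w\in\Fix(Q^{\circ n})\setminus\{0\}$ lying in $\Fix(P^{\circ n})$ via the scaling identity $P^{\circ n}(\omega z)=\omega^{r^n}P^{\circ n}(z)$) is a mild repackaging of the paper's injectivity-plus-counting argument, and your uniform use of the product identity for multiplicities streamlines the paper's case split on $\rho_{P^{\circ n}}(z_0)\neq 1$ versus $\rho_{P^{\circ n}}(z_0)=1$.
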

\begin{Rem}\label{c1d}
	The conclusion trivially holds if $k=1$. When $k\geq2$, according to the proof of Theorem~\ref{Rittmove}, we can take $c_1$ to be the minimal positive integer such that $\gcd(r(r^{c_1}-1),k)=1$ and $d$ to be the minimal positive integer such that $r^d\equiv1\Mod{k}$.
\end{Rem}
\begin{proof}
If $k=1$, then $P=Q$ and the conclusion is trivial. Assume $k\geq2$. Write $R(z)=z^l R_0(z)$, where $l\in\Z_{\geq0}$ and $R_0(z)\in\C[z]$ with $R_0(0)\neq0$. Set $r_0=r+lk$. For $M\in\Z_{>0}$ such that $\gcd(r(r^M-1),k)=1$, we also have $\gcd(r_0(r_0^M-1),k)=1$. Replacing $(r,R)$ by $(r_0,R_0)$, we may assume $R(0)\neq0$. If $\deg(R)=0$, then both $P$ and $Q$ are conjugate to $z^r$, and the conclusion is immediate. Henceforth assume $s:=\deg(R)\geq1$.

Recall that for every rational map $f\in\C(z)$ of degree at least $2$ and a fixed point $z_0\in\Fix(f)$, we have $\rho_f(z_0)\neq1$ if and only if the multiplicity of $z_0$ in $\Fix(f)$ is $1$. For every polynomial $f(z)\in\C[z]$ of degree $m\geq2$ and every $n\in\Z_{>0}$, the point $\infty$ always has multiplier $\rho_{f^{\circ n}}(\infty)=0$, hence $\infty$ has multiplicity $1$ in $\Fix(f^{\circ n})$. Thus for polynomials, we only need to consider periodic points in $\C$. Let $\Fix(\cdot)$ denote $\Fix(\cdot,\C)$ from now on.

Fix an arbitrary integer $n\geq1$. From the commutative relation
\begin{equation}\label{commn}
	Q^{\circ n}\circ z^k=z^k\circ P^{\circ n},
\end{equation}
it is easy to see that the map
$$h:\Fix(P^{\circ n})\to\Fix(Q^{\circ n}),\quad z_0\mapsto z_0^k$$
is well-defined (as a map between sets).

Set $F_1=\Fix(P^{\circ n})$ and $F_2=\Fix(Q^{\circ n})$, which are multisets of cardinality $m^n$, where $4\leq m:=r+sk=\deg(P)=\deg(Q)$. For a multiset $X$ and an element $x\in X$, let $\multi_X(x)$ denote the multiplicity of $x$ in $X$. We claim that for every $z_0\in F_1$, we have
\begin{equation}\label{multisame}
	\multi_{F_1}(z_0)=\multi_{F_2}(z_0^k)\quad\text{and}\quad\rho_{P^{\circ n}}(z_0)=\rho_{Q^{\circ n}}(z_0^k).
\end{equation}

Differentiating \eqref{commn} and evaluating at $z_0\in\Fix(P^{\circ n})$, we obtain
\begin{equation}\label{diffeva}
	\left((Q^{\circ n})^\prime(z_0^k)-(P^{\circ n})^\prime(z_0)\right)\cdot z_0^{k-1}=0.
\end{equation}

Let $z_0\in\Fix(P^{\circ n})$ such that $z_0\neq0$. By \eqref{diffeva},
$$\rho_{P^{\circ n}}(z_0)=(P^{\circ n})^\prime(z_0)=(Q^{\circ n})^\prime(z_0^k)=\rho_{Q^{\circ n}}(z_0^k).$$
Thus \eqref{multisame} holds when $\rho_{P^{\circ n}}(z_0)=\rho_{Q^{\circ n}}(z_0^k)\neq1$. Assume $\rho_{P^{\circ n}}(z_0)=1$. Writing $\zeta_k=\exp(2\pi i/k)$, we have
\begin{equation}\label{eqex}
	Q^{\circ n}(z^k)-z^k=P^{\circ n}(z)^k-z^k=(P^{\circ n}(z)-z)\prod_{j=1}^{k-1}(P^{\circ n}(z)-\zeta_k^j z).
\end{equation}
The left-hand side of \eqref{eqex} can be regarded as a polynomial in the variable $z^k-z_0^k$, and the right-hand side as a polynomial in $z-z_0$. Since $z_0\neq0$, we have $\ord_{z-z_0}(z^k-z_0^k)=1$ and $\ord_{z-z_0}(P^{\circ n}(z)-\zeta_k^j z)=0$ for $1\leq j\leq k-1$. Applying $\ord$ to both sides of \eqref{eqex}, we obtain
\begin{align*}
	\ord_{z-z_0^k}(Q^{\circ n}(z)-z)&=\ord_{z^k-z_0^k}(Q^{\circ n}(z^k)-z^k)\\
	&=\ord_{z^k-z_0^k}(Q^{\circ n}(z^k)-z^k)\cdot\ord_{z-z_0}(z^k-z_0^k)\\
	&=\ord_{z-z_0}(Q^{\circ n}(z^k)-z^k)\\
	&=\ord_{z-z_0}(P^{\circ n}(z)^k-z^k)\\
	&=\ord_{z-z_0}(P^{\circ n}(z)-z).
\end{align*}
Thus $\multi_{F_1}(z_0)=\multi_{F_2}(z_0^k)\geq2$ and \eqref{multisame} holds.

For the point $0=h(0)$, since $r\geq2$, $0$ is a superattracting fixed point of both $P^{\circ n}$ and $Q^{\circ n}$, i.e., $\rho_{P^{\circ n}}(0)=\rho_{Q^{\circ n}}(0)=0$. Hence $\multi_{F_1}(0)=\multi_{F_2}(0)=1$, concluding the proof of \eqref{multisame}.

By \eqref{multisame} and $\# F_1=\# F_2$, if $h(z_0)=h(z_1)$ (in $\C$) implies $z_0=z_1$ (in $\C$) for $z_0,z_1\in F_1$, then $S_n(P)=S_n(Q)$. In this case, we say that the integer $n$ is a \emph{good} integer.

\textbf{Claim:} for every integer $n\geq1$, either $n$ is good, or $\gcd(r^n-1,k)>1$.

\emph{Proof of the Claim:} Assume $n\in\Z_{>0}$ is not good. Then there exist $z_0\neq z_1$ in $F_1$ with $z_1^k=z_0^k$. Hence $z_1\neq0$ and $z_1=\zeta z_0$ for some $\zeta\in\C$ with $\zeta^k=1$ and $\zeta\neq1$. It is easy to see that there exists $R_n(z)\in\C[z]$ with $R_n(0)\neq0$ such that $P^{\circ n}(z)=z^{r^n} R_n(z^k)$ and $Q^{\circ n}(z)=z^{r^n} R_n(z)^k$. From $P^{\circ n}(z_0)=z_0\neq0$ and $P^{\circ n}(\zeta z_0)=\zeta z_0\neq0$, we deduce $\zeta^{r^n}=\zeta$, so
$$1<\ord_{\C^\times}(\zeta)\mid\gcd(r^n-1,k).$$

Let $M$ be the minimal positive integer such that $\gcd(r(r^M-1),k)=1$. In particular, $r$ and $k$ are coprime. By Euler’s theorem, $r^{\varphi(k)}\equiv1\Mod{k}$. Let $d$ be the minimal positive integer with $r^d\equiv 1\Mod{k}$. Then $d\mid\varphi(k)$. For every integer $N\geq0$, we have
$$r^{M+Nd}-1\equiv r^M-1\Mod{k},$$
so $\gcd(r^{M+Nd}-1,k)=1$. By the above Claim, every integer in the arithmetic progression $(M+Nd)_{N=0}^\infty$ is good. Since the sequence $(r^n\bmod k)_n$ has exact period $d$, $M\leq d$ by minimality. If $M=d$, then
$$k\mid\gcd(r^d-1,k)=\gcd(r^M-1,k)=1,$$
contradicting the assumption that $k\geq2$. Thus $M<d$
\end{proof}

\begin{Eg}
Let $k\geq2$ be an integer, and $V(z)\in\C[z]$ a polynomial of degree $n\geq1$ with $V(0)\neq0$. Consider the Ritt move
$$(P(z):=z\cdot V(z^k),Q(z):=z\cdot V(z)^k).$$
We show that $S_1(P)\neq S_1(Q)$ for general such $V$. Write
$$V(z)=a_n z^n+\cdots+a_1 z+a_0\text{, where }(a_n,\dots,a_0)\in\C^\times\times\C^{n-1}\times\C^\times.$$
It is easy to compute that $S_1(Q)$ is the multiset
$$\left\lbrace0,\rho_Q(0)=a_0^k,\rho_Q(w_1)=1+kw_1\frac{V^{\prime}(w_1)}{V(w_1)},\dots,\rho_Q(w_{nk})=1+kw_{nk}\frac{V^{\prime}(w_{nk})}{V(w_{nk})}\right\rbrace$$
of cardinality $nk+2$, where we write $V(z)^k=1+a_n^k(z-w_1)\cdots(z-w_{nk})$. One can check that
$$\prod_{j=1}^{nk}\left(a_0-1-k\,w_j\frac{V^{\prime}(w_j)}{V(w_j)}\right)=\prod_{j=1}^{nk}\left(a_0-1-k\,w_j V(w_j)^{k-1}V^{\prime}(w_j)\right)$$
is a nonzero polynomial in $a_0,\dots,a_n$; denote it by $f(a_0,\dots,a_n)$.

When $(a_n,\dots,a_0)$ does not lie on the union of the hypersurfaces $a_0=a_0^k$ and $f(a_0,\dots,a_n)=0$ in $\left(\A^1_\C\setminus\{0\}\right)\times\A_\C^{n-1}\times\left(\A^1_\C\setminus\{0\}\right)$, we have $\rho_P(0)=a_0\notin S_1(Q)$, hence $S_1(P)\neq S_1(Q)$. Therefore, for $(a_n,\dots,a_0)$ in a Zariski-dense open subset of $\left(\A^1_\C\setminus\{0\}\right)\times\A_\C^{n-1}\times\left(\A^1_\C\setminus\{0\}\right)$, we have $S_1(P)\neq S_1(Q)$.
\end{Eg}
\begin{Eg}\label{23move}
	Let $P(z)=z^2(z^3+1)$ and $Q(z)=z^2(z+1)^3$, which satisfy the hypothesis of Theorem~\ref{Rittmove}. Indeed, by Remark~\ref{c1d}, for every $n\in\Z_{>0}$, we have
	$$S_{2n-1}(P)=S_{2n-1}(Q).$$
	Using computer computations, one finds that $S_2(P)\neq S_2(Q)$ and $S_4(P)\neq S_4(Q)$, so $P$ and $Q$ are not isospectral.
\end{Eg}
\begin{Eg}
	Consider the Ritt move $(P(z)=z(z^2-3),Q(z)=z(z-3)^2)$. Then $P(z-2)+2=Q(z)$, hence $P$ and $Q$ are conjugate and isospectral.
\end{Eg}
Beyond the coincidence of the multiplier spectra over an infinite arithmetic progression under additional hypotheses, we can consider the relation of the multiplier spectra of Ritt moves up to multiplicative dependence:
\begin{Prop}\label{multidep}
	Let $r,k\in\Z_{>0}$ with $r\geq2$ and $R(z)\in\C[z]\setminus\{0\}$. Set
	$$P(z)=z^r R(z^k)\quad\text{and}\quad Q(z)=z^r R(z)^k.$$
	\begin{enumerate}[(1)]
		\item \label{multidep1} For every $z_0\in\Per(P)$, there exist $w_0\in\Per(Q)$ and $m\in\Z_{>0}$ such that $\rho_P(z_0)=\rho_Q(w_0)^m$ and $n_Q(w_0)\mid n_P(z_0)$.
		\item \label{multidep2} For every $w_0\in\Per(Q)$, there exist $z_0\in\Per(P)$ and $m\in\Z_{>0}$ such that $\rho_Q(w_0)=\rho_P(z_0)^m$ and $n_Q(w_0)\mid n_P(z_0)$.
	\end{enumerate}
	In fact, for both \eqref{multidep1} and \eqref{multidep2} we can choose $m=n_P(z_0)/n_Q(w_0)$.
\end{Prop}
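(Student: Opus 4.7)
The plan is to exploit the explicit semiconjugacy between $P$ and $Q$ furnished by the power map $\pi(z) := z^k$. A direct computation shows
\begin{equation*}
\pi\circ P(z)=(z^r R(z^k))^k=z^{rk}R(z^k)^k=(z^k)^r R(z^k)^k=Q(z^k)=Q\circ\pi(z),
\end{equation*}
so $P\geq_\pi Q$, and hence $\pi\circ P^{\circ n}=Q^{\circ n}\circ\pi$ for every $n\geq1$. Everything in the proposition will follow from this single relation together with the chain rule.

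For part (\ref{multidep1}), given $z_0\in\Per(P)$ of exact period $n:=n_P(z_0)$, set $w_0:=\pi(z_0)=z_0^k$. The semiconjugacy yields $Q^{\circ n}(w_0)=\pi(P^{\circ n}(z_0))=w_0$, so $w_0\in\Per(Q)$ and $d:=n_Q(w_0)\mid n$; put $m:=n/d$. Differentiating $Q^{\circ n}\circ\pi=\pi\circ P^{\circ n}$ at $z=z_0$ and using $P^{\circ n}(z_0)=z_0$ gives
\begin{equation*}
(Q^{\circ n})'(w_0)\cdot\pi'(z_0)=\pi'(z_0)\cdot(P^{\circ n})'(z_0).
\end{equation*}
When $z_0\neq0$ we have $\pi'(z_0)=kz_0^{k-1}\neq0$, so cancellation yields $(Q^{\circ n})'(w_0)=\rho_P(z_0)$; and applying the chain rule to $Q^{\circ n}=(Q^{\circ d})^{\circ m}$ at the fixed point $w_0$ of $Q^{\circ d}$ gives $(Q^{\circ n})'(w_0)=\rho_Q(w_0)^m$, proving $\rho_P(z_0)=\rho_Q(w_0)^m$. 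In the degenerate case $z_0=0$ one has $w_0=0$, and since $r\geq2$ the origin is a superattracting fixed point of both $P$ and $Q$, so $\rho_P(0)=0=\rho_Q(0)^m$ trivially.

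For part (\ref{multidep2}), given $w_0\in\Per(Q)$ of period $d:=n_Q(w_0)$, the semiconjugacy shows that $P^{\circ d}$ restricts to a self-map of the finite fiber $\pi^{-1}(w_0)$: for each $\alpha\in\pi^{-1}(w_0)$, $\pi(P^{\circ d}(\alpha))=Q^{\circ d}(\pi(\alpha))=Q^{\circ d}(w_0)=w_0$. Any self-map of a non-empty finite set admits a periodic point, so there exist $\alpha\in\pi^{-1}(w_0)$ and $m'\geq1$ with $P^{\circ dm'}(\alpha)=\alpha$. Taking $z_0:=\alpha$ and $n:=n_P(z_0)$, we get $n\mid dm'$; conversely $\pi(z_0)=w_0$ combined with $P^{\circ n}(z_0)=z_0$ forces $Q^{\circ n}(w_0)=w_0$, whence $d\mid n$. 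Setting $m:=n/d=n_P(z_0)/n_Q(w_0)$, the same chain-rule computation as in part (\ref{multidep1}) gives the claimed multiplier identity, with the degenerate case $w_0=0$ forcing $z_0=0$ and both multipliers vanishing.

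I do not anticipate any serious obstacle: the whole argument is driven by a single explicit semiconjugacy and the chain rule. The only mild subtleties are the degenerate case $z_0=w_0=0$ (handled by $r\geq2$ forcing both multipliers to vanish) and the observation in part (\ref{multidep2}) that one must pass to a $P^{\circ d}$-periodic point inside the finite fiber $\pi^{-1}(w_0)$, since individual preimages of $w_0$ need not themselves be $P$-periodic.
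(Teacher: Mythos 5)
Your approach is the same as the paper's: use the semiconjugacy $\pi\circ P=Q\circ\pi$ with $\pi(z)=z^k$, differentiate it along a periodic orbit, and compare local multipliers. The only place where you diverge is in part~(\ref{multidep2}): the paper invokes a lemma of Favre--Gauthier (their Lemma 3.42) to produce a $P$-preperiodic preimage of $w_0$ and then pushes it forward to a periodic one, whereas you observe directly that $P^{\circ d}$ restricts to a self-map of the finite non-empty fiber $\pi^{-1}(w_0)$ and therefore has a periodic point in it. Your version is more self-contained and arguably cleaner; both reach the same $z_0$.

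One thing you should have flagged: in part~(\ref{multidep2}) the chain-rule computation you carry out (differentiating $Q^{\circ n}\circ\pi=\pi\circ P^{\circ n}$ at $z_0$ and using $\pi'(z_0)\neq0$) yields $(Q^{\circ n})'(w_0)=(P^{\circ n})'(z_0)$, and hence $\rho_Q(w_0)^m=\rho_P(z_0)$ with $m=n_P(z_0)/n_Q(w_0)$ --- exactly the same identity as in part~(\ref{multidep1}), namely $\rho_P(z_0)=\rho_Q(w_0)^m$. The statement of~(\ref{multidep2}), however, claims $\rho_Q(w_0)=\rho_P(z_0)^m$ with that same $m$, which is not what the computation gives when $m>1$. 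You assert that the computation ``gives the claimed multiplier identity'' without noticing the mismatch. In fairness, the paper's own proof has the same issue (it simply says the choices ``satisfy the requirement''), so this is most plausibly a typo in the proposition's statement of~(\ref{multidep2}) rather than a gap in the argument --- for the downstream application (equality of the characteristic-exponent spaces $V(P)=V(Q)$) either form suffices, and the version your argument actually proves is the cleaner one since it gives $\chi_P(z_0)=\chi_Q(w_0)$. Still, a careful write-up should state what is actually proved rather than parroting the label.

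A very minor omission: you handle the degenerate case $z_0=0$, but periodic points at $\infty$ (and more generally any superattracting periodic orbit with $z_0\neq 0$) also need a word; the paper dispatches all of them uniformly by ``if $\rho_P(z_0)=0$, take $w_0=\infty$ and $m=1$,'' which is slightly tidier than casing on $z_0=0$ alone.
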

\begin{proof}
\eqref{multidep1} Let $z_0\in\Per(P)$ with exact period $n$. If $\rho_P(z_0)=0$, then take $w_0=\infty$ and any $m\in\Z_{>0}$. Assume $\rho_P(z_0)\neq0$. In particular, $z_0\notin\{0,\infty\}$ since $r\geq2$. From \eqref{diffeva} (which holds without assuming $\gcd(r(r^M-1),k)=1$), we have
$$n_Q(z_0^k)\mid n\quad\text{and}\quad\rho_{Q^{\circ n}}(z_0^k)=\rho_{P^{\circ n}}(z_0)=\rho_P(z_0).$$
Set $w_0=z_0^k$ and $m=n/n_Q(w_0)$.

\eqref{multidep2} Let $w_0\in\Per(Q)$ with exact period $n$. If $\rho_Q(w_0)=0$, take $z_0=\infty$ and any $m\in\Z_{>0}$. Assume $\rho_Q(w_0)\neq0$. In particular, $w_0\notin\{0,\infty\}$. By \cite[Lemma~3.42]{FG22}, there exists $z_0\in\PrePer(P)$ such that $z_0^k=w_0$. Replacing $z_0$ by $P^{\circ l}(z_0)$ for some suitable $l\in\Z_{>0}$ if necessary, we may assume that $z_0\in\Per(P)$. It is clear that $n=n_Q(w_0)\mid n_P(z_0)$. By \eqref{diffeva}, the choices $z_0$ and $m=n_P(z_0)/n$ satisfy the requirement.
\end{proof}
\begin{Rem}
We can give more explicit descriptions of $m$ and relations between the multiplier spectra $(S_n(P))$ and $(S_n(Q))$. For example, in the proof of the $\rho_Q(w_0)\neq0$ case of \eqref{multidep2}, we must have
$$P^{\circ jn}(z_0)=\zeta_j z_0\quad (0\leq j\leq m-1),$$
where $1=\zeta_0,\dots,\zeta_{m-1}$ are distinct $k$-th roots of unity. In particular,
$$m=\frac{n_P(z_0)}{n_Q(w_0)}\leq k.$$
We could also deduce some other restrictions involving $\zeta_j$. Similarly, we can require that $m=\frac{n_P(z_0)}{n_Q(w_0)}\leq k$ in \eqref{multidep1}.
\end{Rem}
Proposition~\ref{multidep} shows that for the Ritt move $(z^rR(z^k),z^r R(z)^k)$ with $r\geq2$, the multipliers of $P$ and $Q$ are mutually multiplicatively dependent. The author has studied multipliers modulo multiplicative dependence in the joint work \cite{JXZ} with Ji and Xie. For every rational map $f\in\C(z)$ of degree at least $2$, let $V(f)\subseteq\R$ be the $\Q$-vector space generated by $\{\chi_f(z)\mid z\in\Per^*(f)\}$.
\begin{Cor}\label{RittCor}
	Let $r,k\in\Z_{>0}$ with $r\geq2$ and $R(z)\in\C[z]\setminus\{0\}$. Set
	$$P(z)=z^r R(z^k)\quad\text{and}\quad Q(z)=z^r R(z)^k.$$
	Then the following hold:
	\begin{enumerate}[(1)]
		\item \label{RittCor1} $V(P)=V(Q)$;
		\item \label{RittCor2} $P$ is exceptional if and only if $Q$ is exceptional;
		\item \label{RittCor3} $P$ is PCF if and only if $Q$ is PCF.
	\end{enumerate}
\end{Cor}
\begin{proof}
\eqref{RittCor1} This follows from Proposition~\ref{multidep} and the definition of $V(\cdot)$.

\eqref{RittCor2} This follows from \eqref{RittCor1} and \cite[Theorem~1.5]{JXZ}. (Note that the converse of \cite[Theorem~1.5]{JXZ} also holds; see \cite{milnor2006lattes}.)

\eqref{RittCor3} This follows from Proposition~\ref{multidep} and \cite[Theorem~1.14]{JXZ}.
\end{proof}
\begin{Rem}
Part \eqref{RittCor2} of Corollary~\ref{RittCor} is a direct corollary of \cite[Theorem~4.4]{Paksemidecomp} (see also \cite[Theorem~3.39]{FG22}), which asserts that for two polynomials $f,g\in\C[z]$ of degree $d\geq2$ with $f\geq g$, the following statements hold:
\begin{itemize}
	\item $f$ is conjugate to $z^d$ if and only if $g$ is conjugate to $z^d$;
	\item $f$ is conjugate to $\pm T_d$ if and only if $g$ is conjugate to $\pm T_d$.
\end{itemize}
Recall that up to conjugacy, exceptional polynomials of degree $d$ are exactly $z^d$ and $\pm T_d(z)$, and that $T_d$ is conjugate to $-T_d$ if and only if $d$ is even.
\end{Rem}
\begin{Rem}
	Let $f,g\in\C(z)$ be two rational maps of the same degree $d\geq2$. Assume first that $f\geq g$. Arguing similarly as in the proof of Proposition~\ref{multidep}, it is easy to see that
	\begin{equation}\label{findimdiff}
		V(f)\cap V(g)+W=V(f)+V(g)
	\end{equation}
	for some finite-dimensional subspace $W$ of $V(f)+V(g)$. Consequently, for intertwined $f$ and $g$, this conclusion \eqref{findimdiff} still holds. Combined with \cite[Theorem~1.4]{JXZ} (whose converse also holds \cite{milnor2006lattes}), we obtain a proof of the fact that for intertwined $f$ and $g$, $f$ is exceptional if and only if $g$ is exceptional.
\end{Rem}

\section{Further directions and problems}\label{S4}
\subsection{Multiplier spectrum over an arithmetic progression}\label{S4.1}
We could ask to what extent the converse of Theorem~\ref{Rittmove} holds.
\begin{Que}\label{Q1}
	Let $P(z),Q(z)\in\C[z]$ be (non-exceptional) polynomials of the same degree $m\geq2$. Under what conditions do there exist integers $d>c_1>0$ such that
	$$S_{c_1+Nd}(P)=S_{c_1+Nd}(Q)$$
	for all $N\in\Z_{\geq0}$? Are there examples that do not arise from Theorem~\ref{Rittmove}, up to equivalence?
\end{Que}
Note that in Theorem~\ref{Rittmove} and Question~\ref{Q1}, the first term $c_1$ of the arithmetic progression is strictly less than the common difference $d$. If $c_1\geq d$, then we can take $k\in\Z_{>0}$ with $c_1<dk$ and consider the arithmetic progression $(c_1+Ndk)_{N=0}^\infty$ instead.

It is interesting to consider the case $c_1=d$, which relates to the notion of a stable multiplier spectrum:
\begin{Def}[Stable multiplier spectrum]
	Let $f,g\in\C(z)$ be two rational maps of the same degree $d\geq2$. We say that $f$ and $g$ have the same \emph{stable multiplier spectrum} and they are \emph{stably isospectral} if $\tau_{d^k}(f^{\circ k})=\tau_{d^k}(g^{\circ k})$ for some $k\in\Z_{>0}$.
\end{Def}
\begin{Rem}
	Clearly, $f$ and $g$ are stably isospectral if and only if there exists an arithmetic progression $A=(Nk)_{N=1}^\infty$ with the first term and common difference equal to an integer $k>0$ such that $S_n(f)=S_n(g)$ for all $n\in A$. Such an arithmetic progression $A=(Nk)_{N=1}^\infty\subseteq\Z_{>0}$ is called \emph{equidistant}.
\end{Rem}
\begin{Eg}
	If there exists $k\in\Z_{>0}$ such that $f^{\circ k}=g^{\circ k}$, then $f$ and $g$ are stably isospectral. For example, for $V(z)\in\C[z]\setminus\C$ and $n\in\Z_{>0}$, the polynomials $P(z)=zV(z^{2n})$ and $Q(z)=-zV(z^{2n})$ are stably isospectral, since $P^{\circ2}=Q^{\circ2}$.
\end{Eg}
Fix an integer $d\geq2$. Let
$$E_d:=\{([f],[g])\in\sM_d(\C)\times\sM_d(\C)\colon\exists k\in\Z_{>0},\tau_{d^k}(f^{\circ k})=\tau_{d^k}(g^{\circ k})\}.$$
Write $E_d=\cup_{k\geq1}E_{d,k}$ as a countable union, where
$$E_{d,k}:=\{([f],[g])\in\sM_d(\C)\times\sM_d(\C)\colon\tau_{d^k}(f^{\circ k})=\tau_{d^k}(g^{\circ k})\}.$$ Note that $E_{d,k}$ is the inverse image of $R_{d^k,m_{d^k}}$ (see \S\ref{S1.1}) under the morphism
$$\sM_d(\C)\times\sM_d(\C)\to\sM_{d^k}(\C)\times\sM_{d^k}(\C),([f],[g])\mapsto([f^{\circ k}],[g^{\circ k}]);$$
so $E_{d,k}$ is Zariski closed. For positive integers $k_1$ and $k_2$ with $k_1\mid k_2$, we have $E_{d,k_1}\subseteq E_{d,k_2}$. Hence $E_d=\cup_{n\geq1}E_{d,n!}$ is a countable union of increasing Zariski closed subsets of $\sM_d(\C)\times\sM_d(\C)$. Currently, it is unclear whether $E_d$ is Zariski closed.
\begin{Que}\label{Q2}
	For $d\geq2$, is $E_d$ Zariski closed in $\sM_d(\C)\times\sM_d(\C)$?
\end{Que}
\begin{Rem}
	The notion of stable multiplier spectrum and Question~\ref{Q2} were proposed by Prof. Xie. We learned about these ideas from Prof. Xie through private communications.
\end{Rem}
Replacing ``multiplier'' with ``length'' in the above discussion yields the analogous notion of a \emph{stable length spectrum}.

We show that the multiplier spectrum morphism over an equidistant arithmetic progression remains generically injective on the moduli space:
\begin{Prop}
	Let $d\geq2$ and $n\geq1$ be integers. The morphism
	$$\tau_d[n]:\sM_d(\C)\to\prod_{j=1}^{m_{d^n}}\A^{N_{d^n,j}}(\C),[f]\mapsto\tau_{d^n}([f^{\circ n}])$$
	is generically injective. Similarly, the restriction $\tilde{\tau}_d[n]$ of $\tau_d[n]$ to $\MPoly^d(\C)$ is also generically injective.
\end{Prop}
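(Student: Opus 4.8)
The strategy is to rerun the argument behind Theorem \ref{polygeninj} (and, in the $\sM_d$ case, Theorem \ref{ratgeninj}) with the full multiplier spectrum morphism replaced by $\tau_d[n]$. The starting point is the identity $S_j(f^{\circ n})=S_{nj}(f)$ for all $j\ge 1$, which follows from $\Per_j(f^{\circ n})=\Per_{nj}(f)$ together with $\rho_{(f^{\circ n})^{\circ j}}=\rho_{f^{\circ nj}}$. Consequently $\tau_d[n]([f])=\tau_{d^n}([f^{\circ n}])$ recovers the \emph{entire} multiplier spectrum of $f^{\circ n}$, and hence (via \cite[Theorem 1.14]{JXZ}, since $f$ is PCF iff $f^{\circ n}$ is) determines whether $f$ is PCF.

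First I would pin down the shape of the non-injectivity locus. As observed in \S\ref{S4.1}, $E_{d,n}=\{([f],[g]):\tau_{d^n}([f^{\circ n}])=\tau_{d^n}([g^{\circ n}])\}$ is Zariski closed, being the preimage of the Zariski closed set $R_{d^n,m_{d^n}}$ under $([f],[g])\mapsto([f^{\circ n}],[g^{\circ n}])$; it contains the diagonal $\Delta$. Combining McMullen's quasi-finiteness theorem for $\tau_{d^n}$ on $\sM_{d^n}\setminus\Psi(\FL_{d^n})$ with the finiteness of the set of degree-$d$ rational maps $f$ for which $f^{\circ n}$ is conjugate to a fixed non-exceptional map (a consequence of the virtual cyclicity of centralizers of non-exceptional maps), and using \cite[Theorem 1.14]{JXZ} to rule out a non-PCF map sharing the multiplier spectrum of a flexible Latt\`es map, one sees that $\tau_d[n]$ is quasi-finite away from a proper Zariski closed subset of $\sM_d$. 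Hence $\Delta$ is an irreducible component of $E_{d,n}$, and if $\tau_d[n]$ (resp. $\tilde\tau_d[n]$) failed to be generically injective, some irreducible component $Z\ne\Delta$ of $E_{d,n}$ (resp. of the corresponding locus in $(\MPoly^d)^2$) would project dominantly to the first factor, hence, since $\tau_d[n]$ is not constant, also non-constantly to the second. Being irreducible and dominating both factors, $Z$ meets the dense open set $U\times U$, where $U$ is the dense open subset of $\Poly^d$ supplied by Theorem \ref{poly33} (for $d\ge4$) or Proposition \ref{poly33deg23} (for $d\in\{2,3\}$), and similarly in the $\sM_d$ case with $U$ furnished by \cite[Theorem 3.3]{JX23} (together with \cite{Pakdeg23} when $d\in\{2,3\}$).

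Next, following the construction in \cite[proofs of Theorems 1.3 and 1.4]{JX23} and using the Zariski density of the PCF locus in $\Poly^d$ (resp. $\Rat_d$), I would produce an irreducible affine curve $C\subseteq Z\cap(U\times U)$ whose two coordinate projections $\phi_1,\phi_2\colon C\to\MPoly^d$ (resp. $\sM_d$) are non-isotrivial, satisfy $\phi_1(C)\cup\phi_2(C)\subseteq U$ and $\phi_1(t)\ne\phi_2(t)$ for all $t$, are not families of flexible Latt\`es maps, and are such that $\{t\in C(\C):\phi_1(t)\text{ is PCF}\}$ is Zariski dense in $C$. Since $\tau_d[n]\circ\phi_1=\tau_d[n]\circ\phi_2$, the first paragraph gives that the PCF locus of $\phi_2$ equals that of $\phi_1$, so it too is Zariski dense in $C$. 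Applying the variant of the Dynamical Andr\'e--Oort conjecture for curves \cite[Theorem 3.4]{JX23} (its flexible-Latt\`es hypothesis being arranged by the construction, and automatic for polynomials), the set $\{t\in C:\phi_1(t)\text{ and }\phi_2(t)\text{ are intertwined}\}$ is infinite; since the intertwined locus is Zariski closed \cite[Remark 1.6]{JX23}, $\phi_1(t)$ and $\phi_2(t)$ are intertwined for every $t\in C$. Finally, as $\phi_1(C),\phi_2(C)\subseteq U$, Theorem \ref{poly33}(2) / Proposition \ref{poly33deg23} (resp. \cite[Theorem 3.3]{JX23}) forces $\phi_1(t)=\phi_2(t)$ in $\MPoly^d$ (resp. $\sM_d$) for all $t\in C$, contradicting $\phi_1(t)\ne\phi_2(t)$. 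This proves generic injectivity of both $\tau_d[n]$ and $\tilde\tau_d[n]$.

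The main obstacle is the third step: assembling a single curve $C$ inside $Z\cap(U\times U)$ that carries a Zariski-dense PCF locus while keeping both projections non-isotrivial and non-conjugate. This is exactly the delicate point handled in \cite{JX23}, and the sub-point requiring care is that the PCF parameters — a countable union of proper subvarieties that is nonetheless Zariski dense — remain Zariski dense after passing to the open subset $Z\cap(U\times U)$ of $Z$, which one verifies using that the first projection $Z\to\sM_d$ (resp. $Z\to\MPoly^d$) is dominant and that $U$ is dense open. Everything else is a formal consequence of results already available: Theorem \ref{poly33}, Proposition \ref{poly33deg23}, McMullen's theorem, \cite[Theorems 3.3 and 3.4, Remark 1.6]{JX23}, and \cite[Theorem 1.14]{JXZ}.
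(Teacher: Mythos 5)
Your proposal is correct in outline, but it takes a genuinely different and much heavier route than the paper. The paper's proof is a short factorization: $\tau_d[n]=\tau_{d^n}\circ\kappa_{d,n}$, where $\kappa_{d,n}\colon[f]\mapsto[f^{\circ n}]$ is the iteration morphism; generic injectivity of $\tau_{d^n}$ is Theorem \ref{ratgeninj} (resp.\ Theorem \ref{polygeninj}) applied in degree $d^n$, so everything reduces to generic injectivity of $\kappa_{d,n}$, which is quoted from \cite[Theorem 5.4]{Pakdeg23} for rational maps and, for polynomials, proved via pre-simplicity (Lemmas \ref{genpresim}, \ref{genG1} and Corollary \ref{Corpresimdecom} give $f^{\circ n}=g^{\circ n}\Rightarrow f=g$ for general $f$ of degree $d\geq3$, with $G(f)=1$ killing the linear factor) and via Theorem \ref{gendeg2}(2) when $d=2$. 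You instead rerun the whole Ji--Xie scheme with $\tau_d$ replaced by $\tau_d[n]$: the identity $S_j(f^{\circ n})=S_{nj}(f)$ and \cite[Theorem 1.14]{JXZ} to transfer PCF-ness, generic quasi-finiteness of $\tau_d[n]$ (McMullen plus finiteness, up to conjugacy, of $n$-th iterative roots of a non-exceptional map), a curve in the off-diagonal part of $E_{d,n}$ with Zariski-dense PCF parameters (deferred to the construction in \cite{JX23}, exactly as the paper's own proof of Theorem \ref{polygeninj} defers it), then the DAO variant \cite[Theorem 3.4]{JX23} and the rigidity statements Theorem \ref{poly33}/Proposition \ref{poly33deg23} (resp.\ \cite[Theorem 3.3]{JX23} together with \cite{Pakdeg23} for $d\in\{2,3\}$). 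What your route buys: the rigidity inputs are applied to degree-$d$ maps in a dense open subset of $\MPoly^d$ (resp.\ $\sM_d$), so you never need the generic-injectivity locus of $\tau_{d^n}$ to meet the small, special locus of $n$-th iterates, a point on which the factorization argument implicitly leans. What it costs: you re-import the entire DAO machinery and the delicate curve construction, and your quasi-finiteness input (finiteness of iterative roots, via Ritt/Levin-type rigidity) is of the same nature as, and weaker than, the generic injectivity of $\kappa_{d,n}$ that suffices for the paper's two-step proof; your deferrals (the dense-PCF curve, and the field-of-definition hypotheses needed for the DAO input) are at the same level of detail as the paper's own proof of Theorem \ref{polygeninj}, so I do not count them as gaps.
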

\begin{proof}
	Let $\kappa_{d,n}:\sM_d\to\sM_{d^n}$ be the composite morphism $[f]\mapsto[f^{\circ n}]$. Then $\tau_d[n]=\tau_{d^n}\circ\kappa_{d,n}$. By Theorem~\ref{ratgeninj}, $\tau_{d^n}$ is generically injective. Thus, it suffices to show that $\kappa_{d,n}$ is also generically injective, which follows directly from \cite[Theorem~5.4]{Pakdeg23}.
	
	To show the generic injectivity of $\tilde{\tau}_d[n]$, replacing Theorem~\ref{ratgeninj} with Theorem~\ref{polygeninj} in the above argument, it suffices to show that the composite morphism
	$$\tilde{\kappa}_{d,n}:\MPoly^d\to\MPoly^{d^n},[f]\mapsto[f^{\circ n}]$$
	is generically injective. If $d=2$, then Theorem~\ref{gendeg2}~\ref{gendeg2.2} implies the generic injectivity of $\tilde{\kappa}_{2,n}$. Assume now $d\geq3$. By Lemmas~\ref{genpresim}~and~\ref{genG1}, there exists a non-empty Zariski open subset $U_d\subseteq\MPoly^d(\C)$ such that for every $[f]\in U_d$, $f$ is pre-simple and $G(f)=1$. We claim that $\tilde{\kappa}_{d,n}\mid_{U_d}$ is injective. Let $[f],[g]\in U_d$ such that $\tilde{\kappa}_{d,n}([f])=\tilde{\kappa}_{d,n}([g])$, i.e., $[f^{\circ n}]=[g^{\circ n}]$ in $\MPoly^d(\C)$. After conjugating $g$ by a suitable degree one polynomial if necessary, we may assume $f^{\circ n}=g^{\circ n}$. Since $g$ is pre-simple, Corollary~\ref{Corpresimdecom} implies that $\tau\circ f=g=f\circ\sigma$ for some $\sigma,\tau\in\Aff(\C)$. Then $\sigma=1$ because $G(f)=1$, hence $f=g$. Thus, the morphism $\tilde{\kappa}_{d,n}$ is injective on $U_d$. Therefore, $\tilde{\tau}_d[n]$ is generically injective.
\end{proof}
The above theorem immediately implies the following corollary:
\begin{Cor}
	Let $d\geq2$ be an integer. Then the stable multiplier map on $\sM_d(\C)$ is very generically injective in the sense that there exists a subset $V\subseteq\sM_d(\C)$ of the form
	$$V=\sM_d(\C)\setminus\left(\cup_{m=1}^\infty Z_m\right),$$
	where each $Z_m$ is a proper Zariski closed subset of $\sM_d(\C)$, such that for all $[f],[g]\in V$, if $f$ and $g$ are stably isospectral, then $[f]=[g]$. An analogous result holds for polynomials as well.
\end{Cor}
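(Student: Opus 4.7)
The plan is to deduce the corollary directly from the preceding proposition by a routine countable-union construction. The key observation is that ``same stable multiplier spectrum'' is by definition a \emph{countable} union of conditions (one for each iterate level $k$), each of which is controlled by the generic injectivity of the level-$k$ stable multiplier morphism $\tau_d[k]$.

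First, I would apply the preceding proposition to produce, for each integer $n\geq 1$, the generic-injectivity data for $\tau_d[n]$. Following the precise formulation of generic injectivity recalled in Theorem \ref{ratgeninj} and used throughout the paper, this yields a non-empty Zariski open subset $U_n\subseteq\sM_d(\C)$, together with a Zariski open subset $W_n$ of the Zariski closure of $\tau_d[n](U_n)$, such that $\tau_d[n]^{-1}(W_n)=U_n$ and the restriction $\tau_d[n]\colon U_n\to W_n$ is a finite morphism of degree $1$. In particular it is injective on $\C$-points. Let $Z_n:=\sM_d(\C)\setminus U_n$, which is a proper Zariski closed subset of $\sM_d(\C)$.

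Second, I set $V:=\sM_d(\C)\setminus\bigcup_{n=1}^\infty Z_n$, which has exactly the form required by the statement. To verify very generic injectivity, take $[f],[g]\in V$ with the same stable multiplier spectrum. By definition there exists $k\in\Z_{>0}$ with $\tau_{d^k}([f^{\circ k}])=\tau_{d^k}([g^{\circ k}])$, i.e.\ $\tau_d[k]([f])=\tau_d[k]([g])$. Since $V\subseteq U_k$, both points lie in $U_k$, and the injectivity of $\tau_d[k]|_{U_k}$ forces $[f]=[g]$.

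There is no real obstacle here; the argument is a direct packaging of the previous proposition. The only point worth double-checking is that the phrase ``generically injective'' in the proposition indeed supplies an open set $U_n$ on which $\tau_d[n]$ is honestly \emph{injective} (and not merely of generic degree $1$). This is guaranteed by the precise formulation used throughout the paper (as in Theorem \ref{ratgeninj}), namely that one can arrange $\tau_d[n]^{-1}(W_n)=U_n$ with $\tau_d[n]|_{U_n}\to W_n$ finite of degree $1$, which forces the restriction to be a bijection on $\C$-points. The hypothesis $d\geq 4$ enters only through its use in the preceding proposition.
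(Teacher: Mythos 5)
Your argument is correct and is exactly the natural unfolding of the paper's ``immediately implies'' — the paper gives no further proof, and your countable-union construction with $U_n$, $Z_n:=\sM_d(\C)\setminus U_n$, and $V=\sM_d(\C)\setminus\bigcup_n Z_n$ is the intended one. One small remark: you say the hypothesis $d\geq 4$ enters through the preceding proposition, but that proposition is in fact stated and proved for all $d\geq 2$, so the restriction $d\geq 4$ in the corollary appears to be a choice of the author rather than a logical necessity; your proof as written works verbatim for $d\geq 2$.
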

Note that generalized Latt\`es maps are sparse in the moduli space, and hence so are maps arising from Ritt moves. For further study, we may ask whether the multiplier spectrum over any infinite arithmetic progression is generically injective on the moduli space.

\subsection{Non-injective locus and length spectrum}\label{S4.2}
We aim to obtain a more precise description of the non-injective locus of $\tilde{\tau}_d$ than that given in Theorem~\ref{polynoninj}, in light of Conjecture~\ref{ratnoninj}.

There are many directions for possible generalizations of Theorem~\ref{polynoninj}. We list some of them as follows:
\begin{itemize}
\item Can we bound $N$ and $\#S$ in Theorem~\ref{polynoninj} (more) effectively and explicitly?
\item Can we exclude case \eqref{polynoninj2} in light of Conjecture~\ref{ratgeninj}?
\item Can we describe $\PNI_d$ more precisely (possibly ignoring zero-dimensional components)?
\item What can be said for isospectral rational maps (not just polynomials)?
\end{itemize}

There are fewer known results for the length spectrum compared to the multiplier spectrum. The length spectrum is more difficult to study because it contains less information and fails to be an (algebraic) morphism between schemes. A significant result \cite[Theorem~1.5]{Ji2023} proved by Ji and Xie in this direction is that apart from the flexible Latt\`es family, the length spectrum determines the conjugacy class of rational maps up to only finitely many choices. We propose the following questions regarding the length spectrum:
\begin{itemize}
	\item Let $d\geq2$ be an integer. Does there exist a non-empty Zariski open subset $U$ of $\sM_d(\C)$ such that every pair $(x,y)\in U\times U$ with $L(x)=L(y)$ must be of the forms $([f],[f])$ or $([f],[\overline{f}])$? Here $\overline{f}$ represents the rational map obtained by applying complex conjugation to all the coefficients of $f$. Note that a conjecture of Ji and Xie \cite[Conjecture~1.9]{JX23} implies that this question has a positive answer. We could ask the same question with the weaker requirement that $U$ has full (or just positive) Lebesgue measure (which may not be Zariski open). Similar questions can be asked for polynomials as well.
	\item Classify rational maps (or polynomials) with the same length spectrum.
\end{itemize}


\begin{thebibliography}{GNY19}
	
	\bibitem[CM21]{Esemigp}
	Carlos Cabrera and Peter Makienko.
	\newblock Amenability and measure of maximal entropy for semigroups of rational maps.
	\newblock {\em Groups Geom. Dyn.}, 15(4):1139--1174, 2021.
	
	\bibitem[DF08]{DF08}
	Romain Dujardin and Charles Favre.
	\newblock Distribution of rational maps with a preperiodic critical point.
	\newblock {\em Amer. J. Math.}, 130(4):979--1032, 2008.
	
	\bibitem[FG22]{FG22}
	Charles Favre and Thomas Gauthier.
	\newblock {\em The arithmetic of polynomial dynamical pairs}, {\em Ann. of Math. Stud.}, vol.~214.
	\newblock Princeton Univ. Press, Princeton, NJ, 2022.
	
	\bibitem[Fri74]{Fried}
	Michael~D. Fried.
	\newblock Arithmetical properties of function fields. \uppercase\expandafter{\romannumeral2}. {T}he generalized {S}chur problem.
	\newblock {\em Acta Arith.}, 25:225--258, 1974.
	
	\bibitem[GNY19]{GNY19}
	Dragos Ghioca, Khoa~D. Nguyen, and Hexi Ye.
	\newblock The dynamical {M}anin-{M}umford conjecture and the dynamical {B}ogomolov conjecture for split rational maps.
	\newblock {\em J. Eur. Math. Soc. (JEMS)}, 21(5):1571--1594, 2019.
	
	\bibitem[Hug24]{VH2412}
	Valentin Huguin.
	\newblock Moduli spaces of polynomial maps and multipliers at small cycles.
	\newblock arXiv:2412.19335, 2024.
	
	\bibitem[JX23]{Ji2023}
	Zhuchao Ji and Junyi Xie.
	\newblock Homoclinic orbits, multiplier spectrum and rigidity theorems in complex dynamics.
	\newblock {\em Forum Math. Pi}, 11:Paper No. e11, 37 p., 2023.
	
	\bibitem[JX25]{JX23}
	Zhuchao Ji and Junyi Xie.
	\newblock The multiplier spectrum morphism is generically injective.
	\newblock {\em J. Eur. Math. Soc. (JEMS)}, 2025.
	\newblock Published online.
	
	\bibitem[JXZ25]{JXZ}
	Zhuchao Ji, Junyi Xie, and Geng-Rui Zhang.
	\newblock {S}pace spanned by characteristic exponents.
	\newblock arXiv:2308.00289v2, 2025. To appear in {\em Math. Ann.}.
	
	\bibitem[Lev90]{Levin}
	Genadi~M. Levin.
	\newblock {S}ymmetries on {J}ulia sets.
	\newblock {\em Math. Notes}, 48(5):1126--1131, 1990.
	
	\bibitem[Lyu83]{Lyubich83}
	Mikhail~Ju. Lyubich.
	\newblock Entropy properties of rational endomorphisms of the {R}iemann sphere.
	\newblock {\em Ergodic Theory Dynam. Systems}, 3:351--385, 1983.
	
	\bibitem[Ma{\~{n}}83]{Mane83}
	Ricardo Ma{\~{n}}{\'e}.
	\newblock On the uniqueness of the maximizing measure for rational maps.
	\newblock {\em Bol. Soc. Bras. Mat.}, 14:27--43, 1983.
	
	\bibitem[McM87]{McMullen1987}
	Curt McMullen.
	\newblock Families of rational maps and iterative root-finding algorithms.
	\newblock {\em Ann. of Math. (2)}, 125(3):467--493, 1987.
	
	\bibitem[Mil93]{Milnor1993}
	John Milnor.
	\newblock Geometry and dynamics of quadratic rational maps.
	\newblock {\em Experiment. Math.}, 2(1):37--83, 1993.
	\newblock With an appendix by the author and Lei Tan.
	
	\bibitem[Mil06a]{Milnor}
	John Milnor.
	\newblock {\em Dynamics in one complex variable}, {\em Ann. of Math. Stud.}, vol.~160.
	\newblock Princeton Univ. Press, Princeton, NJ, 3rd edition, 2006.
	
	\bibitem[Mil06b]{milnor2006lattes}
	John Milnor.
	\newblock On {L}att{\`e}s maps.
	\newblock In {\em Dynamics on the {Riemann} sphere: {A} {Bodil} {Branner} {Festschrift}}, pages 9--43. Eur. Math. Soc., Z{\"u}rich, 2006.
	
	\bibitem[Pak11]{Pak11}
	Fedor Pakovich.
	\newblock Algebraic curves {$P(x)-Q(y)=0$} and functional equations.
	\newblock {\em Complex Var. and Elliptic Equ.}, 56(1-4):199--213, 2011.
	
	\bibitem[Pak16]{Paksemiconj}
	Fedor Pakovich.
	\newblock On semiconjugate rational functions.
	\newblock {\em Geom. Funct. Anal.}, 26:1217--1243, 2016.
	
	\bibitem[Pak17]{Paksemidecomp}
	Fedor Pakovich.
	\newblock Polynomial semiconjugacies, decompositions of iterations, and invariant curves.
	\newblock {\em Ann. Sc. Norm. Super. Pisa, Cl. Sci. (5)}, 17(4):1417--1446, 2017.
	
	\bibitem[Pak18]{Pakg01}
	Fedor Pakovich.
	\newblock On rational functions whose normalization has genus zero or one.
	\newblock {\em Acta Arith.}, 182:73--100, 2018.
	
	\bibitem[Pak19a]{Pak19a}
	Fedor Pakovich.
	\newblock On mutually semiconjugate rational functions.
	\newblock {\em Arnold Math. J.}, 5:339--354, 2019.
	
	\bibitem[Pak19b]{Pak19b}
	Fedor Pakovich.
	\newblock Recomposing rational functions.
	\newblock {\em Int. Math. Res. Not. IMRN}, 2019(7):1921--1935, 2019.
	
	\bibitem[Pak20a]{PakgMos}
	Fedor Pakovich.
	\newblock Algebraic curves {$A^{\circ l}(x)-U(y)=0$} and arithmetic of orbits of rational functions.
	\newblock {\em Mosc. Math. J.}, 20:153--183, 2020.
	
	\bibitem[Pak20b]{Pakgfinite}
	Fedor Pakovich.
	\newblock Finiteness theorems for commuting and semiconjugate rational functions.
	\newblock {\em Conform. Geom. Dyn.}, 24:202--229, 2020.
	
	\bibitem[Pak20c]{Pakgenla}
	Fedor Pakovich.
	\newblock On generalized {L}att\`es maps.
	\newblock {\em J. Anal. Math.}, 142:1--39, 2020.
	
	\bibitem[Pak23]{Pakinvvar}
	Fedor Pakovich.
	\newblock Invariant curves for endomorphisms of {$\P^1\times\P^1$}.
	\newblock {\em Math. Ann.}, 385:259--307, 2023.
	
	\bibitem[Pak25a]{Paksimple}
	Fedor Pakovich.
	\newblock On iterates of rational functions with maximal number of critical values.
	\newblock {\em J. Anal. Math.}, 156:213--251, 2025.
	
	\bibitem[Pak25b]{Pakdeg23}
	Fedor Pakovich.
	\newblock Periodic curves for general endomorphisms of
	$\mathbb{C}\mathbb{P}^1\times\mathbb{C}\mathbb{P}^1$.
	\newblock arXiv:2506.09948v2, 2025.
	
	\bibitem[Rit22]{Ritt}
	J.~F. Ritt.
	\newblock Prime and composite polynomials.
	\newblock {\em Trans. Amer. Math. Soc.}, 23:51--66, 1922.
	
	\bibitem[Rit23]{RittPerm}
	J.~F. Ritt.
	\newblock Permutable rational functions.
	\newblock {\em Trans. Amer. Math. Soc.}, 25:399--448, 1923.
	
	\bibitem[Sil07]{Silverman2007}
	Joseph~H. Silverman.
	\newblock {\em The arithmetic of dynamical systems}, {\em Grad. Texts in Math.}, vol.~241.
	\newblock Springer, New York, NY, 2007.
	
	\bibitem[Sil12]{Silverman2012}
	Joseph~H. Silverman.
	\newblock {\em Moduli spaces and arithmetic dynamics}, {\em CRM Monogr. Ser.}, vol.~30.
	\newblock Amer. Math. Soc., Providence, RI, 2012.
	
	\bibitem[Wie64]{PermGp}
	Helmut Wielandt.
	\newblock {\em Finite permutation groups}.
	\newblock Academic Press, New York and London, 1964.
	
	\bibitem[Xie17]{Xie2017}
	Junyi Xie.
	\newblock The existence of {Z}ariski dense orbits for polynomial endomorphisms of the affine plane.
	\newblock {\em Compos. Math.}, 153(8):1658--1672, 2017.
	
	\bibitem[Zdu90]{Zdunik90}
	Anna Zdunik.
	\newblock Parabolic orbifolds and the dimension of the maximal measure for
	rational maps.
	\newblock {\em Invent. Math.}, 99(3):627--649, 1990.
	
	\bibitem[ZM08]{RittZM}
	Michael~E. Zieve and Peter M{\"u}ller.
	\newblock On {R}itt's polynomial decomposition theorems.
	\newblock arXiv:0807.3578, 2008.
	
\end{thebibliography}
\end{document}